\pdfoutput=1
\RequirePackage{ifpdf}
\ifpdf % We are running pdfTeX in pdf mode
\documentclass[pdftex]{sigma}
\else
\documentclass{sigma}
\fi

\usepackage{mathtools,braket,enumerate}
\usepackage{tikz}
\usetikzlibrary{decorations}
\usetikzlibrary{arrows}
\tikzstyle{v} = [circle, draw, inner sep=2pt, minimum size=3pt, fill=black]
\tikzstyle{e} = [fill, opacity=.2,
fill opacity=.2,
%line cap=round,
line join=round,
line width=13pt]
\pgfdeclarelayer{background}
\pgfsetlayers{background,main}

\pgfdeclaredecoration{dashsoliddouble}{initial}{
 \state{initial}[width=\pgfdecoratedinputsegmentlength]{
 \pgfmathsetlengthmacro\lw{.3pt+.5\pgflinewidth}
 \begin{pgfscope}
 \pgfpathmoveto{\pgfpoint{0pt}{\lw}}%
 \pgfpathlineto{\pgfpoint{\pgfdecoratedinputsegmentlength}{\lw}}%
 \pgfmathtruncatemacro\dashnum{%
 round((\pgfdecoratedinputsegmentlength-3pt)/6pt)
 }
 \pgfmathsetmacro\dashscale{%
 \pgfdecoratedinputsegmentlength/(\dashnum*6pt + 3pt)
 }
 \pgfmathsetlengthmacro\dashunit{3pt*\dashscale}
 \pgfsetdash{{\dashunit}{\dashunit}}{0pt}
 \pgfusepath{stroke}
 \pgfsetdash{}{0pt}
 \pgfpathmoveto{\pgfpoint{0pt}{-\lw}}%
 \pgfpathlineto{\pgfpoint{\pgfdecoratedinputsegmentlength}{-\lw}}%
 \pgfusepath{stroke}
 \end{pgfscope}
 }
}

\numberwithin{equation}{section}

\newtheorem{Theorem}{Theorem}[section]

\newtheorem{Lemma}[Theorem]{Lemma}
\newtheorem{Proposition}[Theorem]{Proposition}
 { \theoremstyle{definition}
\newtheorem{Definition}[Theorem]{Definition}
\newtheorem{Example}[Theorem]{Example}
\newtheorem{Remark}[Theorem]{Remark}
\newtheorem{Question}[Theorem]{Question} }

\DeclareMathOperator*{\Der}{Der}
\DeclareMathOperator{\cl}{cl}
\DeclareMathOperator{\si}{si}
\DeclareMathOperator{\PG}{PG}

\begin{document}
\allowdisplaybreaks

\newcommand{\arXivNumber}{1908.01535}

\renewcommand{\thefootnote}{}

\renewcommand{\PaperNumber}{080}

\FirstPageHeading

\ShortArticleName{Modular Construction of Free Hyperplane Arrangements}

\ArticleName{Modular Construction \\ of Free Hyperplane Arrangements\footnote{This paper is a~contribution to the Special Issue on Primitive Forms and Related Topics in honor of Kyoji Saito for his 77th birthday. The full collection is available at \href{https://www.emis.de/journals/SIGMA/Saito.html}{https://www.emis.de/journals/SIGMA/Saito.html}}}

\Author{Shuhei TSUJIE}

\AuthorNameForHeading{S.~Tsujie}

\Address{Department of Education, Hokkaido University of Education, Hokkaido, Japan}
\Email{\href{mailto:tsujie.shuhei@a.hokkyodai.ac.jp}{tsujie.shuhei@a.hokkyodai.ac.jp}}
\URLaddress{\url{https://sites.google.com/view/tsujieshuheimath/}}

\ArticleDates{Received January 29, 2020, in final form August 13, 2020; Published online August 22, 2020}

\Abstract{In this article, we study freeness of hyperplane arrangements. One of the most investigated arrangement is a graphic arrangement. Stanley proved that a graphic arrangement is free if and only if the corresponding graph is chordal and Dirac showed that a graph is chordal if and only if the graph is obtained by ``gluing'' complete graphs. We will generalize Dirac's construction to simple matroids with modular joins introduced by Ziegler and show that every arrangement whose associated matroid is constructed in the manner mentioned above is divisionally free. Moreover, we apply the result to arrangements associated with gain graphs and arrangements over finite fields.}

\Keywords{hyperplane arrangement; free arrangement; matroid; modular join; chordality}

\Classification{52C35; 05B35; 05C22; 13N15}

\renewcommand{\thefootnote}{\arabic{footnote}}
\setcounter{footnote}{0}

\section{Introduction}

A (central) \textit{hyperplane arrangement} $ \mathcal{A} $ over a field $ \mathbb{K} $ is a finite collection of subspaces of codimension $ 1 $ in a finite dimensional vector space $ \mathbb{K}^{\ell} $.
A standard reference for arrangements is~\cite{orlik1992arrangements}.
Let $ S $ denote the polynomial algebra $ \mathbb{K}[x_{1}, \dots, x_{\ell}] $, where $ (x_{1}, \dots, x_{\ell}) $ is a basis for the dual space $ \big(\mathbb{K}^{\ell}\big) ^{\ast} $.
Let $ \Der(S) $ denote the \textit{module of derivations} of $ S $, that is,
\begin{gather*}
\Der(S) \coloneqq \{ \theta \colon S \to S \,|\, \theta \text{ is $ S $-linear and } \theta(fg) = f\theta(g) + \theta(f)g \text{ for any } f,g \in S \}.
\end{gather*}
The \textit{module of logarithmic derivations} $ D(\mathcal{A}) $ is defined by
\begin{gather*}
D(\mathcal{A}) \coloneqq \{ \theta \in \Der(S) \,|\, \theta(\alpha_{H}) \in \alpha_{H}S \text{ for all } H \in \mathcal{A} \},
\end{gather*}
where $ \alpha_{H} $ is a linear form such that $ \ker(\alpha_{H}) = H $.

\begin{Definition}
An arrangement $ \mathcal{A} $ is called \textit{free} if $ D(\mathcal{A}) $ is a free $ S $-module.
\end{Definition}

Although the definition of free arrangements is algebraic, Terao's celebrated factorization theorem \cite[Main Theorem]{terao1981generalized-im} shows a solid relation between algebra, combinatorics, and topology of arrangements.
Terao's conjecture asserts that the freeness of an arrangement is determined by its combinatorial property and it is still widely open.

One of typical family of arrangements is graphic arrangements.
Let $ \Gamma = ([n], E_{\Gamma}) $ denote a~simple graph, where $ [n] \coloneqq \{1, \dots, n\} $.
Define a \textit{graphic arrangement} $ \mathcal{A}(\Gamma) $ by
\begin{gather*}
\mathcal{A}(\Gamma) \coloneqq \{ \{x_{i}-x_{j}=0\} \,|\, \{i,j\} \in E_{\Gamma}\}.
\end{gather*}

A simple graph is \textit{chordal} if every cycle of length at least $ 4 $ has a chord, which is an edge connecting nonconsecutive vertices of the cycle.
Freeness of graphic arrangements is characterized in terms of graphs as follows.
\begin{Theorem}[{Stanley, see \cite[Theorem 3.3]{edelman1994free-mz}} for example]
A graphic arrangement $ \mathcal{A}(\Gamma) $ is free if and only if $ \Gamma $ if chordal.
\end{Theorem}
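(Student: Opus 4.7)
The plan is to prove the two directions separately. For the ``only if'' direction I would invoke Terao's factorization theorem together with the fact that the localization of a free arrangement is free. For the ``if'' direction I would invoke Terao's addition theorem. The combinatorial bridge is the classical characterization of chordal graphs via a perfect elimination ordering: $ \Gamma $ is chordal if and only if its vertices admit an ordering $ v_{1}, \dots, v_{n} $ such that, for every $ i $, the set $ N_{i} \coloneqq N_{\Gamma}(v_{i}) \cap \{v_{i+1}, \dots, v_{n}\} $ is a clique in $ \Gamma $.

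\emph{Only if.} By contraposition, assume $ \Gamma $ contains an induced cycle $ C $ of length $ k \geq 4 $ on vertices $ i_{1}, \dots, i_{k} $, and let $ X \coloneqq \bigcap_{j} \{x_{i_{j}} - x_{i_{j+1}} = 0\} $ (indices mod $ k $). Because $ C $ is induced in $ \Gamma $, the only hyperplanes of $ \mathcal{A}(\Gamma) $ containing $ X $ are those arising from edges of $ C $, so the localization $ \mathcal{A}(\Gamma)_{X} $ is, up to a linear change of coordinates, the graphic arrangement $ \mathcal{A}(C_{k}) $. Since localizations of free arrangements are free, it suffices to show $ \mathcal{A}(C_{k}) $ is not free for $ k \geq 4 $. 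I would compute its characteristic polynomial (most directly via Whitney's broken-circuit theorem, using that $ C_{k} $ itself is the unique circuit) and observe that it does not split into linear factors over $ \mathbb{Z} $, contradicting Terao's factorization theorem.

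\emph{If.} Fix a perfect elimination ordering $ v_{1}, \dots, v_{n} $ of $ \Gamma $ and let $ \Gamma_{i} $ denote the induced subgraph on $ \{v_{i}, \dots, v_{n}\} $; each $ \Gamma_{i} $ is chordal because removing a simplicial vertex preserves chordality. I would prove by decreasing induction on $ i $ that $ \mathcal{A}(\Gamma_{i}) $ is free with explicit exponents. Passing from $ \mathcal{A}(\Gamma_{i+1}) $ to $ \mathcal{A}(\Gamma_{i}) $ amounts to adjoining the $ |N_{i}| $ hyperplanes $ \{x_{v_{i}} = x_{u}\} $ with $ u \in N_{i} $. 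I would add these one at a time and apply Terao's addition theorem at each step. The crucial point is that, because $ N_{i} $ is a clique, the restriction of each intermediate arrangement to the newly adjoined hyperplane (obtained by substituting $ x_{v_{i}} = x_{u} $) is itself, after deleting repetitions, the graphic arrangement of a smaller chordal graph, so its freeness and exponents are supplied by the induction hypothesis.

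The main obstacle is the exponent bookkeeping in the inductive step: to apply Terao's addition theorem at each stage one must track how the exponents of the intermediate arrangements evolve and check that the exponents of each successive restriction agree with the pattern required by the theorem. The simpliciality of $ v_{i} $ is exactly what makes this bookkeeping close up, and the broader ``gluing cliques along common cliques'' principle underlying the argument is precisely what the modular-join construction developed later in the paper axiomatizes.
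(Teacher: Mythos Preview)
The paper does not prove this theorem; it is quoted as a classical result (attributed to Stanley, with a reference to Edelman--Reiner) and used only as motivation. So there is no ``paper's own proof'' to compare against, and your outline is the standard argument one finds in the literature.

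Your plan is sound. For the ``only if'' direction, localizing at an induced $k$-cycle and computing $\chi(\mathcal{A}(C_{k}),t)=(t-1)^{k}+(-1)^{k}(t-1)$ via the chromatic polynomial does the job: for $k\ge 4$ this has non-real roots, so Terao's factorization rules out freeness. For the ``if'' direction your perfect-elimination-ordering induction with the addition theorem works; the key observation you state is exactly right, namely that when you restrict the intermediate arrangement to the newly added hyperplane $\{x_{v_{i}}=x_{u}\}$, the edges $\{v_{i},u'\}$ already adjoined collapse onto the existing clique edges $\{u,u'\}$, so the restriction is (an inessential extension of) $\mathcal{A}(\Gamma_{i+1})$ itself. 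The exponent bookkeeping then reduces to checking that one exponent increases by $1$ at each addition, which is immediate once you know both triple members are free with the predicted exponents.

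It is worth noting that the paper's own machinery yields an alternative proof of the ``if'' direction: a chordal graph has a graphic matroid that is supersolvable (each simplicial vertex gives a modular coatom), hence modularly extended, and Theorem~\ref{main divisionally free} then gives divisional freeness. This route replaces the step-by-step addition-theorem bookkeeping by a single appeal to Stanley's modular-element factorization (Theorem~\ref{stanley division}) together with Abe's division theorem, which is arguably cleaner. The ``only if'' direction, however, is not addressed by the paper's framework and still requires the localization/characteristic-polynomial argument you sketch.
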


A vertex of a simple graph is called \textit{simplicial} if its neighborhood form a clique.
\begin{Theorem}[Dirac {\cite[Theorems 1, 2, and 4]{dirac1961rigid-aadmsduh}}]\label{Dirac}
The class of chordal graphs coincides with the smallest class $ \mathcal{C} $ of graphs satisfying the following conditions.
\begin{enumerate}[$(i)$]\itemsep=0pt
\item\label{chordal 1} The null graph is belongs to $ \mathcal{C} $.
\item\label{chordal 2} Suppose that a simple graph $ \Gamma $ has a simplicial vertex $ v $ and $ \Gamma \setminus v \in \mathcal{C} $.
Then $ \Gamma \in \mathcal{C} $.
\item\label{chordal 3} Let $ \Gamma $ be a simple graph on $ V = V_{1} \cup V_{2} $.
Suppose that the induced subgraph $ \Gamma[V_{1}\cap V_{2}] $ is complete (including the null graph) and $ E_{\Gamma} = E_{\Gamma[V_{1}]} \cup E_{\Gamma[V_{2}]} $.
If $ \Gamma[V_{1}] \in \mathcal{C} $ and $ \Gamma[V_{2}] \in \mathcal{C} $, then $ \Gamma \in \mathcal{C} $.
\end{enumerate}
\end{Theorem}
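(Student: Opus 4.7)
The plan is to prove the two set inclusions separately.

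For $\mathcal{C} \subseteq \{\text{chordal graphs}\}$: by minimality of $\mathcal{C}$, it suffices to verify that the chordal graphs themselves form a class satisfying (i)--(iii). Only (ii) and (iii) require work. For (ii), if $v$ is simplicial in $\Gamma$ and $\Gamma \setminus v$ is chordal, any cycle of length $\geq 4$ in $\Gamma$ either avoids $v$ (inheriting a chord from $\Gamma \setminus v$) or passes through $v$, in which case its two $v$-neighbors in the cycle lie in the clique $N_{\Gamma}(v)$ and are therefore adjacent, supplying a chord. For (iii), consider a hypothetical chordless cycle $C$ of length $\geq 4$ in $\Gamma$. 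If $V(C) \subseteq V_{i}$ for some $i$, then $C$ is a chordless long cycle in $\Gamma[V_{i}]$, contradicting chordality. Otherwise $C$ visits both $V_{1} \setminus V_{2}$ and $V_{2} \setminus V_{1}$; since every edge of $\Gamma$ lies in some $E_{\Gamma[V_{i}]}$, the cycle $C$ must cross $V_{1} \cap V_{2}$ in at least two vertices, which are adjacent because $\Gamma[V_{1} \cap V_{2}]$ is complete. A short check rules out these two crossings being cyclically consecutive in $C$ (otherwise all remaining vertices of $C$ would lie on a single side, contradicting the visits to both), so they form a genuine chord.

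For $\{\text{chordal graphs}\} \subseteq \mathcal{C}$: I induct on $|V(\Gamma)|$. The null graph lies in $\mathcal{C}$ by (i). For a nonempty chordal $\Gamma$, the classical Dirac lemma produces a simplicial vertex $v$; then $\Gamma \setminus v$ is chordal (chordality is preserved under induced subgraphs) and has fewer vertices, so $\Gamma \setminus v \in \mathcal{C}$ by the inductive hypothesis, whence $\Gamma \in \mathcal{C}$ by (ii).

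The principal obstacle is the existence of a simplicial vertex in every nonempty chordal graph. A standard argument: if $\Gamma$ is complete, every vertex is simplicial; otherwise pick non-adjacent $a,b$ and a minimal $a$--$b$ vertex separator $S$. Splicing shortest paths through each side of the separator into a cycle and invoking chordality forces $S$ to induce a clique. Then induction on the proper subgraph induced by $S$ together with a single connected component of $\Gamma \setminus S$ yields a vertex whose neighborhood sits entirely inside that piece, so it remains simplicial in $\Gamma$. Note that condition (iii) is not actually needed to generate chordal graphs inside $\mathcal{C}$; rules (i) and (ii) alone suffice. Its role in the statement is only to enlarge the list of closure conditions that the chordal class must satisfy in the forward inclusion, ensuring the characterization is genuinely as a minimal class under all three operations.
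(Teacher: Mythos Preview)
The paper does not supply its own proof of this theorem; it is quoted as a classical result with a citation to Dirac \cite{dirac1961rigid-aadmsduh}, so there is no in-paper argument to compare against. Your write-up is essentially the standard textbook proof of Dirac's characterization: closure of chordality under operations (i)--(iii) for one inclusion, and the existence of a simplicial vertex (via the ``minimal separators are cliques'' lemma) plus induction for the other. The reasoning is sound, though a couple of passages are compressed. In the check for~(iii), your phrase ``at least two vertices'' in $V_{1}\cap V_{2}$ is what you actually use, so when you rule out consecutiveness you should allow for the possibility that more than two cycle vertices lie in $V_{1}\cap V_{2}$; in that case any two that are non-consecutive already give a chord, and three consecutive ones in a cycle of length $\geq 4$ also force a chord. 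In the simplicial-vertex lemma, to guarantee that the simplicial vertex produced by induction on $\Gamma[S\cup A]$ lies in $A$ (and hence stays simplicial in $\Gamma$), one normally invokes the stronger form ``a non-complete chordal graph has two non-adjacent simplicial vertices'' together with the fact that $S$ is a clique; you might make that explicit. Your closing observation that rules (i) and (ii) alone already generate all chordal graphs, so that (iii) only strengthens the forward inclusion, is correct and matches the paper's remark immediately following the theorem.
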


Note that every complete graph $ K_{n} $ belongs to $ \mathcal{C} $ by the condition (\ref{chordal 1}) and (\ref{chordal 2}), and the condition (\ref{chordal 2}) is unified with the condition~(\ref{chordal 3}) for non-complete graphs.
Thus a chordal graph is constructed by gluing complete graphs.

The purpose of this paper is to generalize the class consisting of chordal graphs in terms of matroids with conditions described in Theorem \ref{Dirac} and associate it with freeness of hyperplane arrangements.
See Oxley \cite{oxley2011matroid} for basic terminologies of matroids.

There are some generalizations of chordality for matroids in terms of circuits and chords (see~\cite{probert2018chordality}).
These are different from our generalization.

Let $ L $ be a geometric lattice.
An element $ X $ is called \textit{modular} if
\begin{gather*}
r(X) + r(Y) = r(X \wedge Y) + r(X \vee Y) \quad \text{ for all } Y \in L,
\end{gather*}
where $ r $ denotes the rank function of $ L $.
A flat $ X $ of a simple matroid $ M $ is called \textit{modular} if $ X $ is modular in $ L(M) $, the lattice of flats of~$ M $.

\begin{Definition}
A simple matroid $ M $ on the ground set $ E $ is said to be a \textit{modular join} if there exist two proper modular flats $ E_{1} $ and $ E_{2} $ of $ M $ such that $ E = E_{1} \cup E_{2} $.
We also say that $ M $ is the \textit{modular join over $ X $}, denoted $ M = P_{X}(E_{1}, E_{2}) = P_{X}(M_{1}, M_{2}) $, where $ M_{i} \coloneqq M|E_{i} $ for $ i = 1,2 $ and $ X \coloneqq E_{1} \cap E_{2} $.
\end{Definition}

\begin{Remark}
Ziegler \cite{ziegler1991binary-potams} introduced a modular join, which is a special case of a \textit{generalized parallel connection} or a \textit{strong join} investigated by Brylawski~\cite{brylawski1975modular-totams} and Lindstr{\"o}m~\cite{lindstrom1978strong-joctsa}.
Our definition of a modular join is different from Ziegler's one.
However, they are equivalent (see \cite[Propositioin~3.3]{ziegler1991binary-potams} and \cite[Proposition~5.10]{brylawski1975modular-totams}).
In addition, note that~$ X $ is modular in~$ M_{1}$,~$M_{2} $, and~$ M $.
\end{Remark}

\begin{Definition}A matroid $ M $ is called \textit{round} (or \textit{nonsplit}) if the ground set is not the union of two proper flats. A subset $ S $ of the ground set of $ M $ is \textit{round} if the restriction $ M|S $ is round.
\end{Definition}

It is well known that the graphic matroid of a simple graph without isolated vertices is round if and only if the graph is complete (see \cite[Theorem~4.2]{borissova2016regular-etpad} for example) and that any induced subgraph isomorphic to a complete graph corresponds to a modular flat of a graphic matroid (see \cite[Proposition~6.9.11 and below]{oxley2011matroid} for example).
Our generalization of chordal graphs is defined as follows.
\begin{Definition}\label{modularly extended matroid}
Let $ \mathcal{ME} $ be the minimal class of simple matroids which satisfies the following conditions.
\begin{enumerate}[$(i)$]\itemsep=0pt
\item The empty matroid is a member of $ \mathcal{ME} $.
\item If a simple matroid $ M $ has a modular coatom $ X $ and $ M|X \in \mathcal{ME} $, then $ M \in \mathcal{ME} $.
\item Let $ M $ be a modular join of $ M_{1} $ and $ M_{2} $ over a round flat.
If $ M_{1}, M_{2} \in \mathcal{ME} $, then $ M \in \mathcal{ME} $.
\end{enumerate}
We say that a simple matroid in $ \mathcal{ME} $ is \textit{modularly extended}.
\end{Definition}

Comparing the conditions in Theorem \ref{Dirac} and Definition \ref{modularly extended matroid}, we can prove that a simple graphic matroid is modularly extended if and only if the associated graph is chordal.

Recent studies \cite{suyama2019signed-dm} and \cite{torielli2020freeness-a} treat a similar class for signed graphs and their associated arrangements.
Moreover, we will study modularly extended matroids associated with gain graphs in Section~\ref{sec:applications}.

The linear dependence of an arrangement $ \mathcal{A} $ determines a simple matroid $ M(\mathcal{A}) $ on itself.
Namely, a subset $ \{H_{1}, \dots, H_{n}\} \subseteq \mathcal{A} $ is defined to be independent if the codimension of the intersection $ H_{1} \cap \dots \cap H_{n} $ is equal to $ n $.
The main theorem of this paper is as follows (see Definitions~\ref{definition divisional flag} and~\ref{definition divisional freeness} for the definition of divisional flags and divisional freeness).
\begin{Theorem}\label{main divisionally free}
Every modularly extended matroid has a divisional flag. In particular, if the linear dependence matroid $ M(\mathcal{A}) $ on an arrangement $ \mathcal{A} $ is modularly extended, then $ \mathcal{A} $ is divisionally free.
\end{Theorem}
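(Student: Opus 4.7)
The plan is to prove by structural induction on Definition \ref{modularly extended matroid} the combinatorial statement that every $M \in \mathcal{ME}$ admits a divisional flag: a maximal chain of flats $\emptyset = X_{0} \subsetneq X_{1} \subsetneq \cdots \subsetneq X_{r} = E$ with $r(X_{i}) = i$ and $\chi(M|X_{i-1}, t) \mid \chi(M|X_{i}, t)$ for every $i$. The base case (the empty matroid) is vacuous. Once the combinatorial statement is in hand, the ``in particular'' assertion is immediate, since the lattice of flats of $M(\mathcal{A})$ coincides with the intersection lattice of $\mathcal{A}$, so a divisional flag of $M(\mathcal{A})$ is the same datum as a divisional flag of $\mathcal{A}$ in the sense of Abe, whence $\mathcal{A}$ is divisionally free.

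For the modular coatom case (Definition \ref{modularly extended matroid} (ii)), let $X$ be a modular coatom of $M$ with $M|X \in \mathcal{ME}$. The inductive hypothesis yields a divisional flag $X_{0} \subsetneq \cdots \subsetneq X_{r-1} = X$ of $M|X$, and I append $X_{r} \coloneqq E$. The only new divisibility to check is $\chi(M|X, t) \mid \chi(M, t)$, which is Stanley's modular factorization theorem: for a modular flat of rank $r - 1$ one has $\chi(M, t) = \chi(M|X, t)(t - e)$ for some nonnegative integer $e$.

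For the modular join case (Definition \ref{modularly extended matroid} (iii)), let $M = P_{X}(M_{1}, M_{2})$ with $X$ round and $M_{1}, M_{2} \in \mathcal{ME}$, and write $d \coloneqq r(X)$, $r_{i} \coloneqq r(M_{i})$. I strengthen the induction to include the assertion that a divisional flag of each $M_{i}$ can be chosen to pass through $X$ (using that $X$ is a modular flat of $M_{1}$ and of $M_{2}$, so that modular-coatom-type arguments inside $M_{i}$ produce such a flag). Let $\emptyset \subsetneq \cdots \subsetneq F_{d} = X \subsetneq F_{d+1} \subsetneq \cdots \subsetneq F_{r_{2}} = E_{2}$ be such a flag of $M_{2}$, and fix any divisional flag of $M_{1}$. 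Concatenate them by taking the $M_{1}$-flag up to $E_{1}$ and appending $X_{r_{1}+j} \coloneqq \cl_{M}(E_{1} \cup F_{d+j})$ for $j = 1, \dots, r_{2} - d$. The rank formula for a modular join gives $r(X_{r_{1}+j}) = r_{1} + j$, and one shows that $M|X_{r_{1}+j} = P_{X}(M_{1}, M_{2}|F_{d+j})$. Brylawski's identity $\chi(N, t)\,\chi(N|X, t) = \chi(N_{1}, t)\,\chi(N_{2}, t)$ for the generalized parallel connection, applied to $M|X_{r_{1}+j}$ and $M|X_{r_{1}+j-1}$, gives
\[
\frac{\chi(M|X_{r_{1}+j}, t)}{\chi(M|X_{r_{1}+j-1}, t)} = \frac{\chi(M_{2}|F_{d+j}, t)}{\chi(M_{2}|F_{d+j-1}, t)},
\]
which is a polynomial by the divisional flag of $M_{2}$, so the required divisibility on the $M$-side follows. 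Roundness of $X$ is used to force the intermediate restrictions $M|X_{r_{1}+j}$ to retain the modular join decomposition with central flat exactly $X$, so that Brylawski's formula applies at every level.

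The main obstacle is Case (iii). Two points must be secured: (a) the strengthened induction hypothesis producing, for each $M_{i} \in \mathcal{ME}$, a divisional flag that passes through a prescribed modular flat $X$ (handled by invoking Case (ii) inside $M_{i}$ at $X$ once one verifies that the appropriate restriction lies in $\mathcal{ME}$); and (b) the identification $M|X_{r_{1}+j} = P_{X}(M_{1}, M_{2}|F_{d+j})$ with $X$ still a round modular flat, which is precisely where the roundness hypothesis enters in order to rule out further unwanted splittings. Once these two structural facts are in place, the divisibility in Case (iii) reduces to the displayed polynomial identity above.
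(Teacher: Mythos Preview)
Your argument rests on a misreading of Definition~\ref{definition divisional flag}. A divisional flag requires
\[
\chi(M/X_{i+1},t)\ \big|\ \chi(M/X_{i},t),
\]
i.e.\ divisibility among \emph{contractions}, not among restrictions $M|X_{i}$. This is not a cosmetic discrepancy: Abe's division theorem (Theorem~\ref{Abe division}) concerns $\mathcal{A}^{H}$, and the paper explicitly reminds us that $M\big(\mathcal{A}^{H}\big)\simeq\si(M(\mathcal{A})/H)$, so the restriction of an arrangement is the \emph{contraction} of its matroid. A chain with $\chi(M|X_{i-1},t)\mid\chi(M|X_{i},t)$ therefore has no direct bearing on divisional freeness, and your ``in particular'' sentence does not follow. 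With the correct definition, your Case~(ii) argument collapses: appending $X_{r}=E$ to a flag of $M|X$ gives no information about $\chi(M/X_{1},t)\mid\chi(M,t)$, since the flag of $M|X$ controls characteristic polynomials of restrictions of $M|X$, not of contractions of $M$.

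Even on its own terms, Case~(iii) has gaps. The strengthened hypothesis that a divisional flag of $M_{2}$ can be chosen to pass through the prescribed modular flat $X$ is not established: $X$ need not be a coatom of $M_{2}$, so ``invoking Case~(ii) inside $M_{i}$ at $X$'' does not apply, and you do not explain how to produce the upper portion $X=F_{d}\subsetneq F_{d+1}\subsetneq\cdots\subsetneq E_{2}$ with the required divisibilities. The identification $M|X_{r_{1}+j}=P_{X}(M_{1},M_{2}|F_{d+j})$ is also asserted without proof; for this to be a modular join one needs both $E_{1}$ and $\cl_{M}(F_{d+j})$ to be modular flats of $M|X_{r_{1}+j}$, which you have not checked, and your invocation of roundness of $X$ (``to rule out further unwanted splittings'') is too vague to do this work.

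The paper's route is different: it produces the flag from the \emph{bottom}, by repeatedly locating a divisional atom $e$ with $\si(M/e)\in\mathcal{ME}$ (Theorem~\ref{main matroid}), and then appeals to Proposition~\ref{divisional flag}. The key technical inputs are that $\mathcal{ME}$ is closed under restriction to modular flats (Lemma~\ref{restriction modular flat}), that a modular join contracted at an atom $e\in E_{1}\setminus X$ is again a modular join (Proposition~\ref{Brylawski contraction}), and that a divisional atom of $M_{1}$ outside $X$ is divisional in $M$ (Lemma~\ref{divisional atom of modular join}). Roundness enters concretely: when $M_{1}$ is itself a modular join $P_{Y}(F,F')$, roundness of $X$ forces $X\subseteq F$ or $X\subseteq F'$, which is what guarantees a divisional atom lying in $E_{1}\setminus X$. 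If you want to repair your approach, you would need to recast everything in terms of contractions and essentially reprove these lemmas.
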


A simple matroid is \textit{supersolvable} if it has a saturated chain consisting of modular flats, or equivalently it belongs to the minimal class satisfying (i) and (ii) in Definition \ref{modularly extended matroid}.
It is known that supersolvable arrangements are inductively free by \cite[Theorem 4.2]{jambu1984free-aim}, and hence divisionally free by \cite[Theorem~1.6]{abe2016divisionally-im}.
Clearly, the class $ \mathcal{ME} $ contains all supersolvable matroids.
Furthermore, we have the following theorem.
\begin{Theorem}\label{main ss}
The class $ \mathcal{ME} $ coincides with the minimal class which contains all supersolvable simple matroids and is closed under taking modular joins over round flats.
\end{Theorem}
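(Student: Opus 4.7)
The inclusion $\mathcal{ME}'\subseteq\mathcal{ME}$, where $\mathcal{ME}'$ denotes the minimal class described in the statement, is immediate: every supersolvable simple matroid lies in $\mathcal{ME}$ by conditions~(i) and~(ii) of Definition~\ref{modularly extended matroid}, and $\mathcal{ME}$ is closed under modular joins over round flats by~(iii). For the reverse inclusion, the plan is to verify that $\mathcal{ME}'$ itself satisfies all three conditions of Definition~\ref{modularly extended matroid}. Conditions~(i) and~(iii) are built into the definition of~$\mathcal{ME}'$, so the crux is condition~(ii): one must show that if a simple matroid~$M$ has a modular coatom~$X$ with $M|X\in\mathcal{ME}'$, then $M\in\mathcal{ME}'$. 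I would establish this by double induction---outer on $|E(M)|$, inner on the minimum number of modular-join steps (the \emph{join depth}) needed to derive $M|X$ in~$\mathcal{ME}'$.

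A repeatedly used auxiliary fact is the transfer principle: if $X$ is modular in~$M$ and $Z\subseteq X$ is modular in~$M|X$, then $Z$ is modular in~$M$. I would prove it by a short calculation based on the modular law $X\wedge(Z\vee F)=Z\vee(X\wedge F)$ in~$L(M)$, valid because $Z\leq X$. This immediately handles the base case of the inner induction: when $M|X$ is supersolvable, appending~$E$ to its modular chain in $L(M|X)$ produces a modular chain of~$L(M)$, so $M$ is supersolvable and hence lies in~$\mathcal{ME}'$.

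For the inductive step, write $M|X=P_Y(M|X_1,M|X_2)$ with $M|X_i\in\mathcal{ME}'$ of strictly smaller join depth and $Y=X_1\cap X_2$ round; the transfer principle lifts modularity of $X_1,X_2,Y$ from~$M|X$ to~$M$. Let $F_i\coloneqq\cl_M(X_i\cup(E\setminus X))$. Modularity of~$X$ in~$M$ yields $r(F_i)=r(F_i\cap X)+1$, and the modular law identifies $F_i\cap X=X_i\vee W$, where $W\coloneqq X\cap\cl_M(E\setminus X)$ is the \emph{shadow} of $E\setminus X$ on~$X$. The goal is to show that $M=P_{F_1\cap F_2}(M|F_1,M|F_2)$ is a modular join over a round flat in which $X_i$ is a modular coatom of $M|F_i$ with $(M|F_i)|X_i=M|X_i$; since $F_i\subsetneq E$, the outer induction applied to $(M|F_i,X_i)$ would then deliver $M|F_i\in\mathcal{ME}'$, whence $M\in\mathcal{ME}'$.

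The main obstacle will be verifying that this candidate decomposition is genuinely of the required form. Three technical points remain: (a)~$F_1$ and $F_2$ are modular flats of~$M$; (b)~the shadow~$W$ lies in~$Y$, so that $F_i\cap X=X_i$ and $X_i$ is indeed a modular coatom of~$M|F_i$; (c)~the intersection $F_1\cap F_2=\cl_M(Y\cup(E\setminus X))$ is round. I expect~(b) to be the crux: the shadow~$W$ need not a priori respect the given modular-join decomposition of~$M|X$, and confining it inside~$Y$ is likely to require both the roundness of~$Y$ and a refinement step in which one replaces $M|X=P_Y(M|X_1,M|X_2)$ by a minimal decomposition absorbing~$W$ into its intersection. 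Once~(b) is secured, (a) will follow from modularity of the $X_i$ together with a further shadow computation, and~(c) from roundness of~$Y$ plus the non-trivial attachment of~$E\setminus X$ to~$Y$ forced by $W\subseteq Y$. Boundary cases such as $|E\setminus X|=1$, where $M=M|X\oplus M|\{e\}=P_\emptyset(M|X,M|\{e\})$, will require a separate, immediate argument that sidesteps the general decomposition.
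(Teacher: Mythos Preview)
Your overall strategy---showing that $\mathcal{ME}'$ satisfies condition~(ii) of Definition~\ref{modularly extended matroid}---is sound, and your base case is correct. The gap is in the inductive step: claim~(b), that the shadow $W=X\cap\cl_M(E\setminus X)$ lies in~$Y$, is false in general, and the ``refinement step'' you gesture at cannot repair it. For a concrete failure, take $M|X$ to be the graphic matroid of the bowtie graph (two triangles on $\{1,2,3\}$ and $\{3,4,5\}$ sharing only the vertex~$3$), so $M|X=P_\emptyset(X_1,X_2)$ with $Y=\emptyset$; now let $M$ be obtained by adding a vertex~$6$ adjacent to $1,2,3$. Then $X$ is a modular coatom of~$M$, but $f(16,26)=12\in X_1\setminus Y$, so $W\not\subseteq Y$. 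Your $F_2=\cl_M(X_2\cup(E\setminus X))$ then has full rank and equals~$E$, so your candidate decomposition collapses.

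The missing idea is the \emph{one-sided attachment}, which is the content of Lemma~\ref{lemma ss}: using Proposition~\ref{Jambu-Papadima}, one shows that the values $f(a,b)$ for $a,b\in E\setminus X$ all lie in a single $X_i$ (say $X_1$), because whenever $f(a,b)\in X_1\setminus X_2$ and $f(a,c)\in X_2\setminus X_1$ the circuit $\{f(a,b),f(a,c),f(b,c)\}$ forces a contradiction. One then proves via the modular short-circuit axiom that $F_1'\coloneqq X_1\cup(E\setminus X)$ is a modular flat of~$M$, whence $M=P_Y(M|F_1',\,M|X_2)$ is a modular join over the \emph{same} round flat~$Y$, with $X_1$ a modular coatom of $M|F_1'$. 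The asymmetry is essential: one side absorbs all of $E\setminus X$ while the other side stays equal to~$X_2$, so the round flat~$Y$ is preserved rather than enlarged. Once you have this lemma, your induction (or the paper's simpler induction on rank) goes through immediately.
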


A graphic arrangement is supersolvable if and only if the corresponding graph is chordal \cite[Proposition~2.8]{stanley1972supersolvable-au}. Therefore freeness and supersolvability are equivalent in the class of graphic arrangements.

As Ziegler \cite{ziegler1991binary-potams} mentioned, there exists a modular join of supersolvable matroids which is not supersolvable.
Therefore the class $ \mathcal{ME} $ is strictly larger than the class of supersolvable matroids.

In order to see an example, let $ M \in \mathcal{ME} $ be a non-supersolvable matroid which is a modular join $ M=P_{X}(M_{1},M_{2}) $ of supersolvable matroids $ M_{1} $ and $ M_{2} $.
The rank of $ M $ is computed by $ r(M) = r(M_{1}) + r(M_{2}) - r(X) $ by Brylawsky \cite[Propositioin~5.2]{brylawski1975modular-totams}.
Since $ M $ is not supersolvable, we have $ r(M_{i}) \leq r(M)-2 $ for $ i \in \{1,2\} $ and $ r(X) \geq 1 $.
Therefore $ r(M) \leq 2r(M)-5 $ and hence $ r(M) \geq 5 $.
Actually, there exists a non-supersolvable matroid in $ \mathcal{ME} $ whose rank is exactly $ 5 $ as follows.

\begin{Example}\label{example intro}
Let $ \mathcal{A}_{1} $ be an arrangement over $ \mathbb{R} $ consisting of the following $ 7 $ hyperplanes
\begin{gather*}
\{z=0\}, \, \{x_{1}=0\}, \, \{x_{2}=0\}, \, \{x_{1}-z=0\}, \, \{x_{2}-z=0\}, \, \{x_{1}-x_{2}=0\}, \, \{x_{1}+x_{2}=0\}.
\end{gather*}
Then $ \mathcal{A}_{1} $ is supersolvable with modular coatom $ \{x_{1}=x_{2}=0\} $.
Let $ \mathcal{A}_{2} $ be an isomorphic copy of $ \mathcal{A}_{1} $ and take a modular join of $ \mathcal{A}_{1} $ and $ \mathcal{A}_{2} $ over $ \{z=0\} $, that is, an arrangement $ \mathcal{A} $ consisting of the following $ 13 $ hyperplanes
\begin{gather*}
\{z=0\}, \, \{x_{1}=0\}, \, \{x_{2}=0\}, \, \{x_{1}-z=0\}, \, \{x_{2}-z=0\}, \, \{x_{1}-x_{2}=0\}, \, \{x_{1}+x_{2}=0\}, \\
\{y_{1}=0\}, \, \{y_{2}=0\}, \, \{y_{1}-z=0\}, \, \{y_{2}-z=0\}, \, \{y_{1}-y_{2}=0\}, \, \{y_{1}+y_{2}=0\}.
\end{gather*}
Then $ r(\mathcal{A})=5$, $M(\mathcal{A}) \in \mathcal{ME} $, and by Theorem \ref{main divisionally free}, $ \mathcal{A} $ is divisionally free. Furthermore, one can deduce that $ \mathcal{A} $ is not supersolvable by showing that it has no modular flats of rank $ 4 $ (or see Example~\ref{example frame}).
\end{Example}

The organization of this paper is as follows.
In Section~\ref{sec: preliminaries}, we recall basic properties about simple matroids and geometric lattices, including modularity.
In addition, we introduce divisional atoms and study them for modular joins. In Section~\ref{sec: proof}, we give a proof of Theorem~\ref{main divisionally free}. Finally, in Section~\ref{sec:applications}, we give applications to arrangements corresponding to gain graphs and arrangements over finite fields.

\section{Preliminaries}\label{sec: preliminaries}

\subsection{Simple matroids and geometric lattices}

In order to avoid confusion, we do not use geometrical terminology such as ``point" and ``hyperplane" for matroids and we call an element of a matroid an \textit{atom} and a flat of corank $ 1 $ of a matroid a \textit{coatom}.
This is lattice theoretic terminology due to the following well-known theorem.

\begin{Theorem}[see {\cite[p.~54, Theorem 2]{welsh2010matroid}} for example]\label{simple matroid geometric lattice correspondence}
The correspondence between a simple matroid and its lattice of flats is a bijection between simple matroids and geometric lattices.
\end{Theorem}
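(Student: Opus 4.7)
The plan is to exhibit two constructions, one sending a simple matroid $M$ to its lattice of flats $L(M)$, and one sending a geometric lattice $L$ to a simple matroid $M(L)$ on the set of atoms of $L$, and then verify these are mutually inverse bijections.

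First I would show $L(M)$ is a geometric lattice whenever $M$ is a simple matroid. Ordered by inclusion, $L(M)$ is a lattice with meet given by intersection and join given by closure of the union, $X \vee Y = \cl(X \cup Y)$. Because $M$ is simple (no loops and no parallel atoms), the atoms of $L(M)$ are exactly the singletons $\{e\}$ for $e \in E$, and every flat $X$ is the join of the atoms it contains since $X = \cl(X)$; this is the atomistic property. Semimodularity (if $X$ and $Y$ both cover $X \wedge Y$ then $X \vee Y$ covers both of them) follows from submodularity of the matroid rank function $r(X)+r(Y) \geq r(X \wedge Y) + r(X \vee Y)$ combined with the matroid fact that the lattice rank of a flat matches its matroid rank.

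Next I would construct the inverse. Given a geometric lattice $L$ with rank function $r$ (length of a maximal chain up from $\hat{0}$), take $E$ to be the atoms of $L$ and declare $\{a_{1},\dots,a_{k}\} \subseteq E$ independent iff $r(a_{1} \vee \dots \vee a_{k}) = k$. The axioms of a matroid need to be checked: closure under subsets is immediate from monotonicity of $r$, but the exchange axiom requires the semimodular covering condition of $L$ and an inductive argument along atoms. This yields a simple matroid $M(L)$ because distinct atoms of $L$ give distinct rank-one flats.

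Finally I would check the two constructions are mutually inverse. Starting from a simple $M$, the atoms of $L(M)$ are $\{e\} \mapsto e$, and by construction the independent sets of $M(L(M))$ match those of $M$, so $M(L(M)) = M$. Starting from a geometric $L$, the assignment $X \mapsto \{a \in E \mid a \leq X\}$ gives a flat of $M(L)$, and the atomistic property of $L$ ensures this is a rank-preserving lattice isomorphism $L \cong L(M(L))$. I expect the main obstacle to be verifying the exchange axiom from semimodularity: one must show that if $\{a_{1},\dots,a_{k}\}$ is independent and $b$ is an atom with $b \not\leq a_{1} \vee \dots \vee a_{k}$, then one can replace some $a_{i}$ by $b$ to obtain another independent set, which is the lattice-theoretic counterpart of the exchange property and is proved by analyzing the cover relations above $a_{1} \vee \dots \vee \widehat{a_{i}} \vee \dots \vee a_{k}$.
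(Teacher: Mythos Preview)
The paper does not prove this theorem at all; it is quoted as a classical fact with a reference to Welsh's textbook, so there is no in-paper argument to compare against. Your outline is the standard textbook proof (essentially the one in Welsh and in Oxley), and the overall structure---$M \mapsto L(M)$ lands in geometric lattices, $L \mapsto M(L)$ on the atoms is a simple matroid, and the two maps are mutually inverse---is correct.

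One point to tighten. In your final paragraph you write that the obstacle is to show: if $\{a_{1},\dots,a_{k}\}$ is independent and $b \not\leq a_{1}\vee\dots\vee a_{k}$, then some $a_{i}$ can be \emph{replaced} by $b$. That hypothesis and conclusion do not match. When $b \not\leq a_{1}\vee\dots\vee a_{k}$, semimodularity gives $r(a_{1}\vee\dots\vee a_{k}\vee b)=k+1$ directly, so $\{a_{1},\dots,a_{k},b\}$ is already independent and no replacement is needed; this is exactly the step you use to verify the augmentation axiom (given independent $I,J$ with $|I|<|J|$, pick $b\in J$ with $b\not\leq\bigvee I$). The ``replace some $a_{i}$'' statement is the basis-exchange property, which is the complementary situation $b \leq a_{1}\vee\dots\vee a_{k}$. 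Also, the hereditary property (subsets of independent sets are independent) is not quite ``immediate from monotonicity of $r$''; you need the semimodular inequality $r(X\vee a)\leq r(X)+1$ to force $r(a_{1}\vee\dots\vee a_{k-1})=k-1$. These are easy fixes, and with them your argument goes through.
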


Thus any properties about simple matroids are translated into properties of geometric lattices, and vice versa.
For example, the contraction and the restriction of matroids are just intervals in the lattice of flats as follows.

\begin{Proposition}[{\cite[Proposition 3.3.8]{oxley2011matroid}}]\label{Oxley interval}
Let $ X $ be a flat of a matroid $ M $. Then
\begin{enumerate}[$(1)$]\itemsep=0pt
\item\label{Oxley interval 1} $ L(M/X) \simeq \big[X, \hat{1}\big] = \{ F \in L(M) \,|\, X \leq F \} $.
\item\label{Oxley interval 2} $ L(M|X) \simeq \big[\hat{0}, X\big] = \{ F \in L(M) \,|\, F \leq X \} $.
\end{enumerate}
\end{Proposition}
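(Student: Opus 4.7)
The plan is to establish both lattice isomorphisms directly from the closure operators of the restriction and contraction, since by Theorem~\ref{simple matroid geometric lattice correspondence} the lattices of flats entirely determine the simple matroids, so matching the underlying flats (as sets) together with the inclusion order is enough.

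For part~\eqref{Oxley interval 2}, the restriction $M|X$ has ground set $X$ and closure operator $\mathrm{cl}_{M|X}(F) = \mathrm{cl}_M(F) \cap X$ for $F \subseteq X$. I would verify the claim by showing that a subset $F \subseteq X$ is a flat of $M|X$ if and only if $F$ is a flat of $M$. If $F$ is a flat of $M$ with $F \subseteq X$, then $\mathrm{cl}_{M|X}(F) = \mathrm{cl}_M(F) \cap X = F \cap X = F$. Conversely, suppose $F$ is a flat of $M|X$. Since $X$ is a flat of $M$ and $F \subseteq X$, monotonicity of $\mathrm{cl}_M$ gives $\mathrm{cl}_M(F) \subseteq \mathrm{cl}_M(X) = X$, and hence $F = \mathrm{cl}_{M|X}(F) = \mathrm{cl}_M(F) \cap X = \mathrm{cl}_M(F)$, so $F$ is a flat of $M$. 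The identity map on flats is then order-preserving in both directions, producing the claimed isomorphism with $[\hat 0, X]$.

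For part~\eqref{Oxley interval 1}, the contraction $M/X$ has ground set $E \setminus X$ and closure operator $\mathrm{cl}_{M/X}(S) = \mathrm{cl}_M(S \cup X) \setminus X$. I would define the candidate bijection $\varphi \colon [X, \hat 1] \to L(M/X)$ by $\varphi(F) = F \setminus X$, with inverse $F' \mapsto F' \cup X$. If $F \in [X, \hat 1]$, then $\mathrm{cl}_{M/X}(F \setminus X) = \mathrm{cl}_M((F \setminus X) \cup X) \setminus X = \mathrm{cl}_M(F) \setminus X = F \setminus X$, so $\varphi(F)$ is a flat of $M/X$. Conversely, if $F'$ is a flat of $M/X$, then $\mathrm{cl}_M(F' \cup X) = F' \cup X$ (using that $X$ is a flat of $M$ and $F' \cap X = \varnothing$), so $F' \cup X$ is a flat of $M$ containing $X$. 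Since both maps are inclusion-preserving and mutually inverse, they induce a lattice isomorphism.

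There is no real obstacle here: the only subtlety is to check that for $F$ a flat of $M|X$ (resp.\ of $M/X$), the closure in the ambient matroid $M$ does not introduce spurious elements, and this is controlled precisely by the hypothesis that $X$ itself is a flat of $M$. Once the set-theoretic correspondence is in place, order-preservation is immediate from the definitions of the bijections, and the lattice operations $\wedge$ (set-theoretic intersection of flats) and $\vee$ (closure of union) are preserved by exactly the same calculations.
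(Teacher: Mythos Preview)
Your argument is correct: the closure formulas for restriction and contraction, together with the hypothesis that $X$ is a flat, give exactly the bijections you describe, and order preservation is immediate. One minor remark: in the converse step of part~\eqref{Oxley interval 1} you do not actually need $X$ to be a flat to conclude $\mathrm{cl}_M(F'\cup X)=F'\cup X$, since $X\subseteq F'\cup X\subseteq\mathrm{cl}_M(F'\cup X)$ already forces $\mathrm{cl}_M(F'\cup X)\cap X=X$; the flatness of $X$ is only needed in part~\eqref{Oxley interval 2}.

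As for comparison with the paper: the paper does not give a proof of this proposition at all. It is quoted as \cite[Proposition~3.3.8]{oxley2011matroid} and used as a standard background fact, so there is nothing to compare your argument against.
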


Note that the contraction of a simple matroid is not simple in general.
However, we can associate the simple matroid $ \si(M) $ with a matroid $ M $ and this operation does not affect the lattice of flats, that is, $ L(M) \simeq L(\si(M)) $.

If $ \mathcal{A} $ be an arrangement and $ M(\mathcal{A}) $ the simple matroid on $ \mathcal{A} $, then for any hyperplane $ H \in \mathcal{A} $, we have $ M\big(\mathcal{A}^{H}\big) \simeq \si(M(\mathcal{A})/H) $, where $ \mathcal{A}^{H} $ denotes the \textit{restriction} defined by
\begin{gather*}
\mathcal{A}^{H} \coloneqq \{ K \cap H \,|\, K \in \mathcal{A}\setminus\{H\}\}.
\end{gather*}

Note that the restriction of an arrangement does \emph{not} correspond to the restriction of a~matroid but the simplification of the contraction.
The restriction of a matroid corresponds to the localization of an arrangement.

\subsection{Characteristic polynomials}

Let $ L $ be a geometric lattice.
The \textit{characteristic polynomial} $ \chi(L,t) \in \mathbb{Z}[t] $ is defined by
\begin{gather*}
\chi(L,t) \coloneqq \sum_{X \in L}\mu(X)t^{r(\hat{1}) - r(X)},
\end{gather*}
where $ r $ denotes the rank function of $ L $ and $ \mu \colon L \to \mathbb{Z} $ denotes the one-variable M\"{o}bius function of $ L $ defined recursively by{\samepage
\begin{gather*}
\mu(X) \coloneqq \begin{cases}
 1 & \text{if } X = \hat{0}, \vspace{1mm}\\
-\displaystyle\sum_{Y < X}\mu(Y) & \text{otherwise.}
\end{cases}
\end{gather*}
The \textit{characteristic polynomial} of $ M $ is defined by $ \chi(M,t) \coloneqq \chi(L(M),t) $.}

The \textit{intersection lattice} $ L(\mathcal{A}) $ of a central arrangement $ \mathcal{A} $ is defined by
\begin{gather*}
L(\mathcal{A}) \coloneqq \bigg\{ \bigcap_{H \in \mathcal{B}}H \,\bigg|\, \mathcal{B} \subseteq \mathcal{A} \bigg\}
\end{gather*}
with a partial order by reverse inclusion.
Note that $ L(\mathcal{A}) $ is a geometric lattice and naturally isomorphic to $ L(M(\mathcal{A})) $.
The \textit{characteristic polynomial} $ \chi(\mathcal{A}, t) \in \mathbb{Z}[t] $ is defined by
\begin{gather*}
\chi(\mathcal{A},t) \coloneqq \sum_{X \in L(\mathcal{A})}\mu(X)t^{\dim X}.
\end{gather*}
Since the rank function of $ L(\mathcal{A}) $ is given by the codimension, $ \chi(\mathcal{A},t) = t^{\ell-r}\chi(M(\mathcal{A}), t) $, where $ \ell $ is the dimension of the ambient space and $ r $ is the rank of $ L(\mathcal{A}) $.
When $ \ell = r $ we say that $ \mathcal{A} $ is \textit{essential}.
It is well known that there exists an essential arrangement $ \mathcal{A}_{0} $ for every arrangement~$ \mathcal{A} $ such that $ L(\mathcal{A}) = L(\mathcal{A}_{0}) $ and $ \mathcal{A} $ is free if and only if~$ \mathcal{A}_{0} $ is free (see \cite{orlik1992arrangements} for details).

\subsection{Modularity}

We excerpt some conditions equivalent to modularity from Brylawski \cite{brylawski1975modular-totams}.
\begin{Proposition}[Brylawski {\cite[Theorem~3.3]{brylawski1975modular-totams}}]\label{brylawski modular equivalent}
Let $ X $ be an element of a geometric lattice~$ L $.
Then the following conditions are equivalent.
\begin{enumerate}[$(1)$]\itemsep=0pt
\item $ X $ is modular.
\item\label{brylawski modular equivalent 2} For $ Y \leq Z $ in $ L $, $ Y \vee (X \wedge Z) = (Y \vee X) \wedge Z $.
\item\label{brylawski modular equivalent 3} For all $ Y \in L $, $ [X \wedge Y, X] \simeq [Y, X \vee Y] $.
\item\label{brylawski modular equivalent 4} For any atom $ e \not\leq X $, $ [\hat{0}, X] \simeq [e, X \vee e] $ and $ X \vee e $ is modular in $ \big[e, \hat{1}\big] $.
\end{enumerate}
\end{Proposition}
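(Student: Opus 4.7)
My plan is to prove the cyclic chain (1)$\Rightarrow$(2)$\Rightarrow$(3)$\Rightarrow$(1), then add (1)$\Leftrightarrow$(4). Throughout I will use two general facts about the geometric lattice $ L $: the submodular inequality $ r(X \vee Y) + r(X \wedge Y) \leq r(X) + r(Y) $ and the inclusion $ Y \vee (X \wedge Z) \leq (Y \vee X) \wedge Z $ whenever $ Y \leq Z $. Modularity of $ X $ is precisely the statement that the submodular inequality is an equality against every $ Y $; the task is to upgrade this numerical identity to the structural statements (2)--(4).

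For (1)$\Rightarrow$(2) with $ Y \leq Z $: compute $ r(Y \vee (X \wedge Z)) $ using the modularity equation of $ X $ together with the rank equation for the pair $ (Y, X \wedge Z) $, and do the same for $ r((Y \vee X) \wedge Z) $; the two values agree, and the already-known inclusion promotes equality of ranks to equality of flats. For (2)$\Rightarrow$(3): the maps $ Z \mapsto Z \vee Y $ and $ W \mapsto W \wedge X $ between $ [X \wedge Y, X] $ and $ [Y, X \vee Y] $ are order-preserving and mutually inverse, the two inverse identities being produced by applying (2) to the pairs $ (Y,Z) $ and $ (X \wedge Y, W) $ respectively. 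For (3)$\Rightarrow$(1): a lattice isomorphism between intervals in a geometric lattice preserves rank, so $ r(X) - r(X \wedge Y) = r(X \vee Y) - r(Y) $, which is the modular rank identity.

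For (1)$\Rightarrow$(4): given an atom $ e \not\leq X $ we have $ X \wedge e = \hat{0} $, so (3) supplies the interval isomorphism $ [\hat{0}, X] \simeq [e, X \vee e] $. Modularity of $ X \vee e $ in $ \big[e, \hat{1}\big] $ follows because joins and meets in $ \big[e, \hat{1}\big] $ agree with those in $ L $ and the rank function there is $ r(\cdot) - 1 $, so for any $ W \in \big[e, \hat{1}\big] $ the modularity equation for $ X \vee e $ against $ W $ inside the interval is precisely the modularity equation for $ X $ against $ W $ in $ L $ rewritten using $ e \leq W $.

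The hard direction is (4)$\Rightarrow$(1), which I would prove by induction on $ r(Y) $. The base $ r(Y) = 0 $ is trivial. For the inductive step pick an atom $ e \leq Y $. When $ e \not\leq X $, apply (4) directly: $ X \vee e $ is modular in $ \big[e, \hat{1}\big] $, the inductive hypothesis on the pair $ (X \vee e, Y) $ in that interval yields a rank identity, and the isomorphism $ [\hat{0}, X] \simeq [e, X \vee e] $ lets one identify $ X \wedge Y $ with the preimage of $ (X \vee e) \wedge Y $, so the identity descends to $ L $. When $ e \leq X $, both sides of the desired rank equation are stable under passing to $ \big[e, \hat{1}\big] $, and the inductive hypothesis applies to $ (X, Y) $ there once we know (4) is inherited by the quotient interval. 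The main obstacle is exactly this last point: verifying that condition (4), a local hypothesis about atoms of $ L $ not below $ X $, propagates to the analogous local hypothesis about atoms of $ \big[e, \hat{1}\big] $ not below $ X $. This will require unpacking what atoms of $ \big[e, \hat{1}\big] $ look like (they correspond to rank-$ 2 $ flats of $ L $ above $ e $) and using the interval-isomorphism clause of (4) in $ L $ to manufacture the required interval-isomorphism clause in $ \big[e, \hat{1}\big] $.
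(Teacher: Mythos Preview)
The paper does not prove this proposition; it is quoted from Brylawski with a citation and no argument, so there is no in-paper proof to compare your outline against.

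On its own merits, your step (2)$\Rightarrow$(3) has a real gap. Applying (2) to the pair $(Y,W)$ with $W\in[Y,X\vee Y]$ correctly gives $(W\wedge X)\vee Y=(Y\vee X)\wedge W=W$, i.e.\ $\phi\psi=\mathrm{id}$. But the other composite, $\psi\phi(Z)=(Z\vee Y)\wedge X=Z$ for $Z\in[X\wedge Y,X]$, does \emph{not} follow from (2) by any substitution: the pair ``$(Y,Z)$'' would require $Y\le Z$, which fails in general, and the pair ``$(X\wedge Y,W)$'' yields only the triviality $X\wedge Y\le X\wedge W$. Condition (2) is a one-sided (left) modular law, and in the pentagon $N_{5}$ (with $\hat 0<a<c<\hat 1$ and $\hat 0<b<\hat 1$) the element $X=c$ satisfies (2) while $[c\wedge b,c]=\{\hat 0,a,c\}$ is not isomorphic to $[b,c\vee b]=\{b,\hat 1\}$, so (3) fails there. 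Thus (2)$\Rightarrow$(3) genuinely needs the semimodular rank function, not merely (2); one clean fix is to combine submodularity for the pair $(Z,Y)$ with the rank-modularity identity (1) for $X$ against $Z\vee Y$ and against $Y$ to force $r\bigl((Z\vee Y)\wedge X\bigr)\le r(Z)$ and hence equality. In effect your cycle must pass through (1) at this point. Your (4)$\Rightarrow$(1) is, as you yourself flag, also unfinished: the inheritance of hypothesis (4) by the interval $\bigl[e,\hat 1\bigr]$ when $e\le X$ is a nontrivial sub-lemma you have not supplied, and in the case $e\not\le X$ your identification of $(X\vee e)\wedge Y$ with the image of $X\wedge Y$ tacitly assumes the isomorphism in (4) is the canonical map $Z\mapsto Z\vee e$, which is stronger than the bare existence asserted.
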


\begin{Theorem}[Brylawski {\cite[Theorem 3.11]{brylawski1975modular-totams}} (the modular short-circuit axiom)]\label{Brylawsky modular short-circuit axiom}
Let $ M $ be a~simple matroid on the ground set $ E $ and $ X \subseteq E $ a nonempty subset.
Then $ X $ is a modular flat of $ M $ if and only if for every circuit $ C $ of $ M $ and an atom $ e \in C \setminus X $ there exist an atom $ x \in X $ and a circuit $ C^{\prime} $ such that $ e \in C^{\prime} \subseteq \{x\} \cup (C \setminus X) $.
\end{Theorem}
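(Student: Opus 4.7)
My plan is to prove the two implications by different methods: the forward direction via Proposition~\ref{brylawski modular equivalent}(\ref{brylawski modular equivalent 3}), and the converse via a basis-extension argument combined with weak circuit elimination.

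For the forward direction, assume $X$ is modular and let $C$ be a circuit with $e\in C\setminus X$. The case $C\cap X=\emptyset$ is trivial with $C':=C$ and any atom $x\in X$, so suppose $C\cap X\ne\emptyset$. Writing $A=C\setminus X$ and $A'=A\setminus\{e\}$, the set $A$ is independent (a proper subset of~$C$), so for $Y:=\cl(A')$ we have $e\le Y\vee X$ while $e\not\le Y$. The lattice isomorphism $[X\wedge Y,X]\simeq[Y,X\vee Y]$ given by $Z\mapsto Z\vee Y$ sends the atom $Y\vee e$ of the second interval to some $(X\wedge Y)\vee x$ on the left, producing an atom $x\in X$ with $x\not\le Y$ and $Y\vee x=Y\vee e$. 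Hence $x\in\cl(A)\setminus A$, and the unique circuit $C'$ of $A\cup\{x\}$ through $x$ must also contain $e$ (else $C'\subseteq A'\cup\{x\}$ would force $x\in\cl(A')=Y$), giving $C'\subseteq\{x\}\cup(C\setminus X)$ as desired.

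For the converse, I would first observe that the hypothesis forces $X$ to be a flat: any $e\in\cl(X)\setminus X$ would lie on a circuit $C\subseteq X\cup\{e\}$, and the axiom would then produce a parallel pair $\{x,e\}$, contradicting simplicity. To verify the rank identity defining modularity, I would extend a basis $B$ of $X\wedge Y$ to bases $B\cup B''$ of $X$ and $B\cup B'$ of $Y$, note $B'\cap X=\emptyset$, and reduce the identity to the independence of $B\cup B''\cup B'$. Suppose a circuit $C$ lies inside this set; it must meet both $B''$ and $B'$, and $C\setminus X=C\cap B'$. Picking $e\in C\cap B'$, the hypothesis supplies an atom $x\in X$ and a circuit $C^{\ast}\subseteq\{x\}\cup B'$; since $B'$ is independent, $x\in C^{\ast}$ with $x\notin B'$, so $x\in X\cap\cl(B')\subseteq X\cap Y=X\wedge Y=\cl(B)$. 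Therefore $x\in\cl(B)\cap\cl(B')$, and either $x\in B$ (making $B'\cup\{x\}$ a dependent subset of $B\cup B'$) or $x\notin B$ (in which case the distinct $x$-circuits of $B\cup\{x\}$ and $B'\cup\{x\}$ yield a circuit inside $B\cup B'$ via weak elimination); either alternative contradicts independence of $B\cup B'$.

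I expect the main difficulty to be in the converse, specifically pinning the atom $x$ produced by the short-circuit hypothesis inside $X\wedge Y$ rather than merely in $X$, since only then does the resulting circuit-elimination contradiction reside in the basis $B\cup B'$ of $Y$. Apart from this, care is needed in keeping track of which independent set each auxiliary circuit is supported in so that the dependency arising from $x$ actually violates the intended one.
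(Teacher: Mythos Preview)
The paper does not supply its own proof of this theorem: it is quoted verbatim from Brylawski~\cite[Theorem~3.11]{brylawski1975modular-totams} as a preliminary tool (used later in Lemma~\ref{lemma ss} and Propositions~\ref{frame modular} and~\ref{extended lift modular}), and no argument is given in the text. Consequently there is nothing in the paper to compare your proposal against.

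That said, your argument stands on its own and appears correct. For the forward direction, your use of the interval isomorphism $[X\wedge Y,X]\simeq[Y,X\vee Y]$ from Proposition~\ref{brylawski modular equivalent}(\ref{brylawski modular equivalent 3}) is sound: setting $Y=\cl(A')$ with $A'=(C\setminus X)\setminus\{e\}$, the atom $Y\vee e$ of the right-hand interval pulls back to an atom $(X\wedge Y)\vee x$ on the left, and the equalities $Y\vee x=Y\vee e=\cl(C\setminus X)$ then place $x$ in $\cl(C\setminus X)\setminus(C\setminus X)$, after which the fundamental-circuit argument pinpoints~$C'$. For the converse, the flatness step via the simplicity of~$M$ is fine, and the basis-extension reduction to independence of $B\cup B'\cup B''$ is the standard route; the crucial observation that the atom~$x$ produced by the hypothesis lies in $X\cap\cl(B')\subseteq X\cap Y=\cl(B)$ is exactly what makes the weak-elimination contradiction land inside the independent set $B\cup B'$, and you have handled both the $x\in B$ and $x\notin B$ subcases correctly.

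One very small point of presentation: in the forward case $C\cap X=\varnothing$ you take $C'=C$ and ``any atom $x\in X$''; this is legitimate since the statement only requires $C'\subseteq\{x\}\cup(C\setminus X)$ and does not force $x\in C'$, but it may be worth remarking explicitly that $x$ is allowed to be redundant here.
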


\begin{Theorem}[{Brylawski \cite[Corollary 3.4]{brylawski1975modular-totams}}]\label{Brylawski modular coatom}
A coatom $ X $ of a simple matroid on $ E $ is modular if and only if for any distinct two atoms $ e, e^{\prime} \in E\setminus X $ there exists $ e^{\prime\prime} \in X $ such that $ \{e,e^{\prime},e^{\prime\prime}\} $ forms a circuit.
\end{Theorem}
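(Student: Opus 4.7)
The plan is to derive both implications from the modular short-circuit axiom stated in Theorem~\ref{Brylawsky modular short-circuit axiom}, exploiting the fact that $X$ has corank $1$ and that $M$ is simple, so all circuits have size at least~$3$.

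For the forward direction, assume $X$ is a modular coatom and let $e,e' \in E\setminus X$ be distinct atoms. Since $X$ is a coatom, $X\vee e = \hat 1$, so $e' \in \cl(X\cup\{e\})\setminus\cl(X)$; hence there is a circuit $C$ with $e' \in C \subseteq X\cup\{e,e'\}$, and necessarily $e \in C$ (otherwise $C \subseteq X \cup\{e'\}$ would force $e'\in\cl(X)=X$). Applying the modular short-circuit axiom to $C$ and the atom $e \in C\setminus X \subseteq\{e,e'\}$ produces an atom $x \in X$ and a circuit $C' \subseteq \{x\}\cup(C\setminus X)\subseteq\{x,e,e'\}$ containing $e$. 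Since $M$ has no circuits of size~$\le 2$, one must have $C'=\{x,e,e'\}$, as required.

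Conversely, assume the three-element circuit condition and verify the short-circuit axiom for $X$. Let $C$ be an arbitrary circuit and $e \in C\setminus X$. The case $C\setminus X = \{e\}$ cannot occur, because then $C\setminus\{e\}\subseteq X$ would give $e \in \cl(C\setminus\{e\})\subseteq\cl(X)=X$. So pick another atom $e' \in C\setminus X$ with $e'\ne e$; the hypothesis supplies $x \in X$ such that $\{e,e',x\}$ is a circuit, and this circuit is contained in $\{x\}\cup(C\setminus X)$, verifying the modular short-circuit axiom and hence the modularity of $X$.

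The only point requiring genuine care is the extraction in the forward direction of a circuit through both $e$ and $e'$: one must use that $X$ is a flat (so $\cl(X)=X$) to force $e$ into such a circuit realizing the dependency $e' \in \cl(X\cup\{e\})$. After that, both directions are clean applications of Theorem~\ref{Brylawsky modular short-circuit axiom} combined with simplicity of $M$, so no substantive obstacle remains.
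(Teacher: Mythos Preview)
The paper does not give its own proof of this statement; it is merely quoted from Brylawski as Theorem~\ref{Brylawski modular coatom}, so there is nothing to compare against directly. Your argument is correct: both directions go through cleanly as applications of Theorem~\ref{Brylawsky modular short-circuit axiom} together with the facts that $X$ is a flat of corank~$1$ and that a simple matroid has no circuits of size $\leq 2$. The one step that needed care~--- producing a circuit in $X\cup\{e,e'\}$ containing both $e$ and $e'$~--- is handled properly via $e'\in\cl(X\cup\{e\})\setminus X$ and the observation that any such circuit must contain $e$.

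A small remark: the forward direction can also be argued directly from the rank equation for modularity, without invoking the short-circuit axiom. Since $X$ is a coatom and $e\notin X$, one has $\cl(\{e,e'\})\vee X=\hat 1$, so modularity of $X$ gives
\[
r(\cl(\{e,e'\})\wedge X)=r(\cl(\{e,e'\}))+r(X)-r(M)=2+(r(M)-1)-r(M)=1,
\]
yielding an atom $e''\in X\cap\cl(\{e,e'\})$; then $\{e,e',e''\}$ is a dependent $3$-set in a simple matroid, hence a circuit. This is closer in spirit to how Brylawski presents it as a corollary, but your route through Theorem~\ref{Brylawsky modular short-circuit axiom} is equally valid and has the pleasant feature of handling both implications with the same tool.
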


We give some properties of modularity required in this article.
\begin{Proposition}[{Jambu--Papadima \cite[Lemmas~1.3 and~1.9]{jambu1998generalization-t}}]\label{Jambu-Papadima}
Let $ X $ be a modular coatom of a simple matroid $ M $ on $ E $. Then for any two distinct atoms $a,b \in E\setminus X $, there exists unique $ f(a,b) \in X $ such that $ a,b,f(a,b) $ form a circuit.
Moreover, for any three distinct atoms $ a,b,c \in E\setminus X $, the atoms $ f(a,b)$, $f(a,c)$, $f(b,c) $ form a circuit.
\end{Proposition}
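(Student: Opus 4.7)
The proof naturally splits into three pieces: existence of $f(a,b)$, uniqueness of $f(a,b)$, and the circuit relation among the triple $f(a,b),f(a,c),f(b,c)$. Existence is immediate from Theorem~\ref{Brylawski modular coatom}: since $X$ is a modular coatom, for any two distinct atoms $a,b\in E\setminus X$ there exists an atom $x\in X$ with $\{a,b,x\}$ a circuit, and we set $f(a,b)\coloneqq x$.

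For uniqueness, the plan is to apply the modular rank identity to the rank-$2$ flat $F\coloneqq\cl(\{a,b\})$ (note $\{a,b\}$ is independent because $M$ is simple). Since $a\notin X$ and $X$ is a coatom, $F\vee X=\hat{1}$, so modularity of $X$ yields
$r(F\wedge X)=r(F)+r(X)-r(F\vee X)=2+(r(M)-1)-r(M)=1.$
Thus $F\cap X$ is a rank-$1$ flat, which by simplicity consists of a single atom; any candidate $x\in X$ with $\{a,b,x\}$ a circuit must lie in $F\cap X$, forcing uniqueness.

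For the triple $f(a,b),f(a,c),f(b,c)$ the same strategy works one dimension higher. Focus on the nondegenerate case in which $\{a,b,c\}$ is independent (if it is dependent, then $\{a,b,c\}$ itself is a circuit lying in a rank-$2$ flat, and the same modular argument forces all three images to coincide, which one must then interpret in the statement). Put $G\coloneqq\cl(\{a,b,c\})$ and observe that $r(G)=3$ and $G\vee X=\hat{1}$, so modularity gives $r(G\wedge X)=2$. Each of $f(a,b),f(a,c),f(b,c)$ lies in $G\cap X$ by construction; they are pairwise distinct because, for instance, $f(a,b)=f(a,c)$ would give $\cl(\{a,b\})=\cl(\{a,f(a,b)\})=\cl(\{a,c\})$ and hence $c\in\cl(\{a,b\})$, contradicting the independence of $\{a,b,c\}$. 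Three pairwise distinct atoms in a rank-$2$ flat of a simple matroid automatically form a circuit, which finishes the argument.

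The main technical step is the rank computation $r(G\wedge X)=2$ (together with its rank-$2$-flat analogue used for uniqueness); everything else reduces to elementary closure and circuit manipulations. No real obstacle is expected beyond writing these two modular identities carefully and disposing of the dependent-triple degeneracy by a brief side remark.
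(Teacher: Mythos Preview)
The paper does not include a proof of this proposition; it simply cites Jambu--Papadima \cite[Lemmas~1.3 and~1.9]{jambu1998generalization-t}, so there is nothing in the paper to compare your argument against.

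Your proof is correct and is the natural one. Existence is exactly Theorem~\ref{Brylawski modular coatom}, and both uniqueness and the triple-circuit claim follow cleanly from the modular rank identity applied to the flats $\cl(\{a,b\})$ and $\cl(\{a,b,c\})$, as you describe. The verification that $f(a,b),f(a,c),f(b,c)$ are pairwise distinct via $\cl(\{a,b\})=\cl(\{a,f(a,b)\})$ is the right mechanism.

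Your remark about the degenerate case (where $\{a,b,c\}$ is already a circuit, forcing the three images to coincide) is well spotted; the statement as written in the paper is strictly speaking imprecise there. However, in the paper's only use of the second clause (the proof of Lemma~\ref{lemma ss}), one starts from the assumption $f(a,b)\in F_{1}\setminus F_{2}$ and $f(a,c)\in F_{2}\setminus F_{1}$, so $f(a,b)\neq f(a,c)$ and the degenerate case is excluded a priori. Thus your nondegenerate argument already covers everything the paper needs, and you could safely replace the side remark by the observation that the conclusion should be read as ``the set $\{f(a,b),f(a,c),f(b,c)\}$ is dependent in $X$'' to make the statement literally true in all cases.
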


\begin{Proposition}[Brylawski {\cite[Proposition 3.5]{brylawski1975modular-totams}}]\label{Brylawski modular tower}
Let $ X $ be a modular flat of a simple matroid~$ M $ and~$ Y $ a modular flat of the restriction~$ M|X $, then~$ Y $ is a modular flat of~$ M $.
\end{Proposition}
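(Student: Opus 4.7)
The plan is to verify the defining rank identity for modularity of $Y$ in $M$ directly, using the rank function of $L(M)$ together with the two pieces of modularity we are given: modularity of $X$ in $M$ (in its ``interval-isomorphism'' guise, Proposition~\ref{brylawski modular equivalent}\eqref{brylawski modular equivalent 3}) and modularity of $Y$ in $M|X$. Fix an arbitrary flat $Z$ of $M$; the goal is to show $r(Y) + r(Z) = r(Y \wedge Z) + r(Y \vee Z)$.

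The key auxiliary object is the flat $W \coloneqq Y \vee (X \wedge Z)$. Since $Y \subseteq X$ and $X \wedge Z \subseteq X$, both $Y$ and $X \wedge Z$ lie in the lattice $L(M|X) = [\hat{0},X]$, and because $X$ is a flat of $M$ the join $W$ agrees whether computed in $M$ or in $M|X$. A short check shows $Y \vee Z = W \vee Z$: the inclusion $W \vee Z \supseteq Y \vee Z$ is immediate from $Y \subseteq W$, and $W \vee Z = (Y \vee (X \wedge Z)) \vee Z \subseteq Y \vee Z$ because $X \wedge Z \subseteq Z$.

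Now I apply the two modularity hypotheses. First, since $Y$ is modular in $M|X$ and $X \wedge Z$ is a flat of $M|X$,
\begin{gather*}
r(Y) + r(X \wedge Z) = r(Y \wedge (X \wedge Z)) + r(W) = r(Y \wedge Z) + r(W),
\end{gather*}
using $Y \subseteq X$ to collapse $Y \wedge (X \wedge Z)$ to $Y \wedge Z$. Second, Proposition~\ref{brylawski modular equivalent}\eqref{brylawski modular equivalent 3} applied to the modular flat $X$ and the element $Z$ provides a rank-preserving lattice isomorphism $[X \wedge Z, X] \simeq [Z, X \vee Z]$ sending $F \mapsto F \vee Z$. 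Since $X \wedge Z \subseteq W \subseteq X$, this yields
\begin{gather*}
r(W) - r(X \wedge Z) = r(W \vee Z) - r(Z).
\end{gather*}
Adding the two displayed equalities and invoking $W \vee Z = Y \vee Z$ gives exactly $r(Y) + r(Z) = r(Y \wedge Z) + r(Y \vee Z)$, as required.

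I expect no genuine obstacle: the argument is elementary once one notices the bridging flat $W$. The only technical point that must be handled carefully is that joins computed inside $M|X$ coincide with joins in $M$ whenever both arguments lie in $X$ (which follows because $X$ is a flat), so that the modularity of $Y$ in $M|X$ can be fed into the calculation in $M$ without further translation.
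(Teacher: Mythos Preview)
Your argument is correct. The bridging flat $W = Y \vee (X \wedge Z)$ is exactly the right device: the modularity of $Y$ inside $[\hat{0},X]$ controls the pair $(Y, X\wedge Z)$, and the interval isomorphism $[X\wedge Z, X]\simeq [Z, X\vee Z]$ from Proposition~\ref{brylawski modular equivalent}\eqref{brylawski modular equivalent 3} transports rank differences from $W$ over $X\wedge Z$ to $W\vee Z = Y\vee Z$ over $Z$. Your caution about joins of flats below $X$ coinciding in $M$ and $M|X$ is well placed and correctly justified; the only point you use without fully unpacking is that the isomorphism in Proposition~\ref{brylawski modular equivalent}\eqref{brylawski modular equivalent 3} is realized by $F \mapsto F\vee Z$ (with inverse $G \mapsto G\wedge X$), but this is standard and is indeed what Brylawski proves.

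As for comparison with the paper: there is nothing to compare. Proposition~\ref{Brylawski modular tower} is stated with a citation to \cite[Proposition~3.5]{brylawski1975modular-totams} and is not proved in the paper itself. Your self-contained proof via the rank identity is essentially Brylawski's original argument recast through the equivalent condition~\eqref{brylawski modular equivalent 3}, and it would serve perfectly well if one wanted to include a proof here.
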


\begin{Proposition}[Brylawski {\cite[Proposition~3.6]{brylawski1975modular-totams}}]\label{Brylawski modular intersection}
Let $ X $ and $ Y $ be modular flats of a simple matroid.
Then $ X \cap Y $ is a modular flat.
\end{Proposition}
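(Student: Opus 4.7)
The plan is to reduce to modularity within the restriction $M|X$ using Proposition~\ref{Brylawski modular tower}, and then verify the rank condition by a short calculation that uses modularity of $X$ once and modularity of $Y$ twice.

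First, $X \cap Y$ is a flat (intersection of flats) contained in $X$, so by Proposition~\ref{Oxley interval}(\ref{Oxley interval 2}) it lies in the lattice of flats of $M|X$, namely the interval $[\hat{0}, X]$ in $L(M)$. Since $X$ is modular in $M$, Proposition~\ref{Brylawski modular tower} will yield the conclusion as soon as we show $X \cap Y$ is modular in $M|X$. Fix therefore $Z \in L(M)$ with $Z \leq X$; we must verify
\begin{gather*}
r(X \cap Y) + r(Z) = r((X \cap Y) \cap Z) + r((X \cap Y) \vee Z).
\end{gather*}

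Since $Z \leq X$, the meet simplifies to $(X \cap Y) \cap Z = Y \cap Z$. The crucial step is to rewrite the join: applying condition~(\ref{brylawski modular equivalent 2}) of Proposition~\ref{brylawski modular equivalent} to the modular flat $Y$ and the comparable pair $Z \leq X$, one obtains $(X \cap Y) \vee Z = X \cap (Y \vee Z)$. Now modularity of $X$ applied to the pairs $(X, Y)$ and $(X, Y \vee Z)$, together with $X \vee Y \vee Z = X \vee Y$ (since $Z \leq X$), gives $r(X \cap Y) = r(X) + r(Y) - r(X \vee Y)$ and $r((X \cap Y) \vee Z) = r(X) + r(Y \vee Z) - r(X \vee Y)$. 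Substituting $r(Y \vee Z) = r(Y) + r(Z) - r(Y \cap Z)$ from modularity of $Y$ for the pair $(Y, Z)$ and cancelling yields the required equality. The essential step, in which both modular hypotheses are genuinely needed, is the identification $(X \cap Y) \vee Z = X \cap (Y \vee Z)$ for $Z \leq X$; everything else is routine rank arithmetic, so I foresee no further obstacle beyond careful bookkeeping of which pair is fed to which modularity.
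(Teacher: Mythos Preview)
Your argument is correct. The paper itself does not prove this proposition; it merely quotes it from Brylawski, so there is no in-paper proof to compare against. Your route---showing that $X\cap Y$ is modular in $M|X$ via the modular-law identity $(X\cap Y)\vee Z = X\cap(Y\vee Z)$ for $Z\leq X$ (Proposition~\ref{brylawski modular equivalent}(\ref{brylawski modular equivalent 2}) applied to the modular element $Y$), and then lifting to $M$ by Proposition~\ref{Brylawski modular tower}---is sound, and the rank bookkeeping checks out. One small point worth making explicit for the reader: the join $(X\cap Y)\vee Z$ computed in $L(M)$ agrees with the join in $L(M|X)=[\hat{0},X]$ because both arguments lie below $X$, so there is no ambiguity in where the modularity equation is being tested.
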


\begin{Proposition}[Probert {\cite[Corollary 4.2.8]{probert2018chordality}}]\label{Probert}
Every modular flat of a round matroid is round.
\end{Proposition}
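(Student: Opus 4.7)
The plan is to argue the contrapositive: assume $X$ is a modular flat of $M$ and $M|X$ is not round, and produce two proper flats of $M$ whose union is $E$, contradicting the roundness of $M$.

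Since $M|X$ is not round, there exist proper flats $Y_1, Y_2$ of $M|X$ with $Y_1 \cup Y_2 = X$ as subsets of $E$; by Proposition~\ref{Oxley interval}(\ref{Oxley interval 2}) these are simply flats of $M$ with $Y_i \subsetneq X$. Set $C \coloneqq \cl(E \setminus X)$ and $Z_i \coloneqq Y_i \vee C$ for $i=1,2$. Then
\[
Z_1 \cup Z_2 \supseteq (Y_1 \cup Y_2) \cup (E \setminus X) = X \cup (E \setminus X) = E,
\]
so the task reduces to showing that each $Z_i$ is a proper flat of $M$.

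The key tool is the modular interval isomorphism $[X \wedge C, X] \simeq [C, E]$, $F \mapsto F \vee C$, provided by Proposition~\ref{brylawski modular equivalent}(\ref{brylawski modular equivalent 3}). Under it the flat $Z_i = Y_i \vee C$ corresponds to $Y_i \vee (X \wedge C)$, so that $Z_i \subsetneq E$ if and only if $Y_i \vee (X \wedge C) \subsetneq X$ in $L(M|X)$. When both of these inequalities hold, $Z_1$ and $Z_2$ are proper flats covering $E$ and we are done. When one of them, say $Y_1 \vee (X \wedge C) = X$, fails, I replace $Z_1$ by an arbitrary coatom $K_1$ of $M$ containing $Y_1$ (such a $K_1$ exists because $Y_1 \subsetneq X \subsetneq E$, the last strictness following since $X = E$ would make $M|X = M$ round). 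Then $K_1$ is still a proper flat containing $Y_1$, and $K_1 \cup Z_2 \supseteq Y_1 \cup Y_2 \cup (E \setminus X) = E$, so the argument finishes as long as $Z_2$ itself is proper.

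The main obstacle is the residual case in which $Y_i \vee (X \wedge C) = X$ for both $i = 1, 2$; here neither choice of $Z_i$ is automatic and the naive construction breaks down. To resolve it I would exploit the extra structural fact that $M$ round forces $M/C$ round (any covering of $M/C$ by proper flats pulls back to a covering of $M$ by proper flats containing $C$), and by the same modular isomorphism we have $\si((M|X)/(X \wedge C)) \cong \si(M/C)$ as simple matroids, so $(M|X)/(X \wedge C)$ is round as well. Playing this constraint against the non-roundness of $M|X$ via the modular short-circuit axiom (Theorem~\ref{Brylawsky modular short-circuit axiom}) should either yield a refined covering pair $Y_1', Y_2'$ with $Y_i' \vee (X \wedge C) \subsetneq X$, or permit an alternative construction of the two proper flats of $M$, completing the contradiction.
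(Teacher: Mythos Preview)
The paper does not supply a proof of this proposition; it merely cites Probert's thesis. So your attempt must stand on its own.

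Unfortunately, your construction collapses in exactly the case that matters. You set $C \coloneqq \cl(E\setminus X)$ and hope that the flats $Z_i = Y_i \vee C$ are proper. But notice: if $M$ is round and $X \subsetneq E$, then necessarily $C = E$. Indeed, were $C$ a proper flat, then $X$ and $C$ would be two proper flats with $X \cup C \supseteq X \cup (E\setminus X) = E$, contradicting the roundness of $M$. Hence $X \wedge C = X$ and $Y_i \vee (X\wedge C) = X$ for both $i$ automatically. In other words, whenever $M$ is round---the only case with content---you are \emph{always} in your ``main obstacle'' case; the earlier ``easy'' cases are vacuous.

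Your sketch for the obstacle then degenerates as well: with $C = E$ the contraction $M/C$ is the empty matroid, and the isomorphism $[X\wedge C, X] \simeq [C,E]$ becomes the trivial $[X,X]\simeq[E,E]$, carrying no information. The appeal to Theorem~\ref{Brylawsky modular short-circuit axiom} is only a hope, not an argument, and it is unclear how it would produce the refined pair $Y_1', Y_2'$ you need.

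A route that does work is induction on $r(M)$. Pick any $e \in E\setminus X$. By Proposition~\ref{brylawski modular equivalent}(\ref{brylawski modular equivalent 4}), the element $X\vee e$ is modular in $\big[e,\hat 1\big]\simeq L(\si(M/e))$, and $\big[\hat 0, X\big]\simeq[e, X\vee e]$, so the restriction of $\si(M/e)$ to the flat corresponding to $X\vee e$ is isomorphic to $M|X$, hence not round. By the inductive hypothesis $\si(M/e)$ is not round; and, as you yourself observed, a covering of the atoms of $\big[e,\hat 1\big]$ by two proper flats lifts to a covering of $E$ by two proper flats of $M$, so $M$ is not round.
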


\begin{Theorem}[Stanley {\cite[Theorem 2]{stanley1971modular-au}}]\label{stanley division}
If $ X $ is a modular element of a geometric lattice $ L $, then $ \chi\big(\big[\hat{0},X\big],t\big) $ divides $ \chi(L,t) $.
\end{Theorem}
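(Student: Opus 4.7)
The plan is to prove Stanley's theorem by exhibiting the explicit product factorization
\begin{gather*}
\chi(L,t) \;=\; \chi\bigl(\bigl[\hat 0, X\bigr],t\bigr)\cdot\Bigl(\sum_{Y\in L,\ Y\wedge X=\hat 0}\mu(\hat 0,Y)\,t^{\,r(\hat 1)-r(X)-r(Y)}\Bigr).
\end{gather*}
The second factor is a genuine polynomial in $t$ with integer coefficients: the modular rank identity $r(X)+r(Y)=r(X\wedge Y)+r(X\vee Y)\le r(\hat 1)$ applied to any $Y$ with $Y\wedge X=\hat 0$ gives $r(Y)\le r(\hat 1)-r(X)$, so each exponent is nonnegative. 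Hence once the identity is established, divisibility is immediate.

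To derive the factorization, I would start from $\chi(L,t)=\sum_{Z\in L}\mu(\hat 0,Z)t^{r(\hat 1)-r(Z)}$ and partition the sum according to the value $W:=X\wedge Z\in[\hat 0,X]$, grouping together all $Z\ge W$ with $X\wedge Z=W$. The lattice isomorphism $[W,X]\simeq[Z,X\vee Z]$ furnished by Proposition~\ref{brylawski modular equivalent}(\ref{brylawski modular equivalent 3}) (applied to the pair $X,Z$) yields the rank identity $r(Z)-r(W)=r(X\vee Z)-r(X)$, which allows one to rewrite the exponent $r(\hat 1)-r(Z)$ in a form that separates a $W$-dependent piece from an $X$-dependent piece. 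This is the structural input that lets the sum split.

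The core combinatorial step is to show that, for each fixed $W\le X$,
\begin{gather*}
\sum_{Z:\,X\wedge Z=W}\mu(\hat 0,Z)\,t^{\,r(\hat 1)-r(Z)} \;=\;\mu(\hat 0,W)\cdot\Bigl(\sum_{Y:\,Y\wedge X=\hat 0}\mu(\hat 0,Y)\,t^{\,r(\hat 1)-r(X)-r(Y)}\Bigr),
\end{gather*}
i.e.\ each $W$-slice factors as $\mu(\hat 0,W)$ times a quantity independent of $W$. After summing over $W\le X$, the usual Möbius expansion $\chi([\hat 0,X],t)=\sum_{W\le X}\mu(\hat 0,W)t^{r(X)-r(W)}$ produces the first factor and completes the proof. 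To prove this slice identity one compares $\mu(\hat 0, Z)$ with $\mu(\hat 0,W)\mu(\hat 0,Y)$ along the bijection $Z\leftrightarrow (W,Y)$ provided by $Z=W\vee Y$ in the modular setting, verifying it via a Weisner-type Möbius recursion restricted to those $Y\in[\hat 0,\hat 1]$ with $Y\wedge X=\hat 0$.

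The main obstacle is precisely this slice decomposition: ensuring that the assignment $Z\mapsto (W,Y)$ really is a bijection under modularity and that the Möbius function multiplies across it. I would handle this by induction on $r(X)$, using Proposition~\ref{brylawski modular equivalent}(\ref{brylawski modular equivalent 4}) to reduce to the case where $X$ has smaller rank (pick an atom $e\not\le X$ and work in $[e,\hat 1]$, where $X\vee e$ is modular and $[\hat 0,X]\simeq[e,X\vee e]$); the base case $r(X)=0$ is trivial. This inductive strategy sidesteps the most delicate bookkeeping and reduces everything to the modular-atom case already handled by standard deletion/restriction identities for $\chi$.
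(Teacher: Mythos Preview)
The paper does not prove Theorem~\ref{stanley division}; it is quoted as a known result with a citation to Stanley's original paper, so there is no proof in the paper to compare against. That said, your target identity
\[
\chi(L,t)=\chi\bigl([\hat 0,X],t\bigr)\cdot\sum_{Y\wedge X=\hat 0}\mu(\hat 0,Y)\,t^{\,r(\hat 1)-r(X)-r(Y)}
\]
is exactly Stanley's factorization, so the overall strategy is the standard one.

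However, the sketch has genuine problems. First, your ``slice identity'' is misstated: as written, summing it over $W\le X$ gives $\bigl(\sum_{W\le X}\mu(\hat 0,W)\bigr)\cdot P(t)=0$, not $\chi([\hat 0,X],t)\cdot P(t)$. The correct slice identity must carry an extra factor $t^{\,r(X)-r(W)}$ on the right. Second, and more seriously, there is no bijection $Z\leftrightarrow(W,Y)$ with $Z=W\vee Y$, $W=X\wedge Z$, $Y\wedge X=\hat 0$: complements in a geometric lattice are not unique, so a given $Z$ with $X\wedge Z=W$ generally corresponds to many such $Y$ (or none), and $\mu(\hat 0,Z)$ does not factor as $\mu(\hat 0,W)\mu(\hat 0,Y)$ termwise. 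Stanley's proof does not use a bijection; it establishes the summed identity directly via a M\"obius argument (e.g.\ Crapo's complementation theorem or a careful double-count). Third, your inductive reduction does not reduce anything: passing to $[e,\hat 1]$ for an atom $e\not\le X$ replaces $X$ by $X\vee e$, whose rank in $[e,\hat 1]$ is again $r(X)$, and even if you induct on corank you would still need $\chi([e,\hat 1],t)\mid\chi(L,t)$, which is false for a general atom $e$. So the induction as described does not close.

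In short: the destination is right, but the path you outline (bijection plus induction on $r(X)$ via Proposition~\ref{brylawski modular equivalent}(\ref{brylawski modular equivalent 4})) does not get there. You need the M\"obius-theoretic identity over complements of $X$, not a bijection.
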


\subsection{Divisionality}

The following theorem plays an important role in this article.
\begin{Theorem}[Abe {\cite[Theorem 1.1 (division theorem)]{abe2016divisionally-im}}]\label{Abe division}
An arrangement $ \mathcal{A} $ is free if there exists $ H \in \mathcal{A} $ such that $ \mathcal{A}^{H} $ is free and $ \chi\big(\mathcal{A}^{H},t\big) $ divides $ \chi(\mathcal{A},t) $.
\end{Theorem}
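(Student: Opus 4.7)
The plan is to prove freeness of $\mathcal{A}$ by reducing it to freeness of the Ziegler multi-restriction $(\mathcal{A}^{H}, m^{H})$, where $m^{H}(X) \coloneqq \#\{ K \in \mathcal{A}\setminus\{H\} \,|\, K\cap H = X\}$ for $X \in \mathcal{A}^{H}$. The first step is to pin down the candidate exponents. Since $\mathcal{A}^{H}$ is free, Terao's factorization theorem gives $\chi\big(\mathcal{A}^{H}, t\big) = \prod_{i=1}^{\ell-1}(t-e_{i})$, where $(e_{1}, \dots, e_{\ell-1})$ are the exponents of $\mathcal{A}^{H}$. The divisibility hypothesis then yields $\chi(\mathcal{A}, t) = (t-d) \prod_{i}(t-e_{i})$ for a unique integer $d$; comparing the coefficient of $t^{\ell-1}$ forces $d = |\mathcal{A}|-\sum_{i} e_{i}$, and non-negativity of $d$ follows from the alternating sign pattern of Whitney numbers.

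Next, I would invoke Ziegler's theorem together with Yoshinaga's converse, which combined state that $\mathcal{A}$ is free with exponents $(d_{1}, \dots, d_{\ell})$ if and only if the multi-arrangement $(\mathcal{A}^{H}, m^{H})$ is free with the same exponents after dropping the Euler value $1$ (subject to a local-freeness condition in rank $\geq 4$). Given the candidate multiset $(d, e_{1}, \dots, e_{\ell-1})$ identified above (with $1$ present as the Euler exponent), the task reduces to building a free basis of $D(\mathcal{A}^{H}, m^{H})$ in those degrees, plus verifying the auxiliary local-freeness condition inductively in $\ell$ by applying the theorem to each proper localization $\mathcal{A}_{X}$ (which has strictly fewer hyperplanes).

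For the construction, I would lift a homogeneous free basis $\theta_{1}, \dots, \theta_{\ell-1}$ of $D(\mathcal{A}^{H})$ to derivations $\tilde{\theta}_{i}$ on $S_{H} \coloneqq S/(\alpha_{H})$ satisfying $\tilde{\theta}_{i}(\alpha_{X}) \in \alpha_{X}^{m^{H}(X)} S_{H}$ for every $X \in \mathcal{A}^{H}$. The degrees of these lifts are controlled through the Solomon--Terao formula, which identifies the multi-characteristic polynomial of $(\mathcal{A}^{H}, m^{H})$ with $\chi(\mathcal{A}, t)/(t-d)$; the divisibility hypothesis ensures this quotient is exactly $\prod_{i}(t-e_{i})$, so the lifts can be chosen in degrees $e_{1}, \dots, e_{\ell-1}$. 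Freeness is then confirmed by Ziegler's multi-Saito criterion: one checks that $\det\big(\tilde{\theta}_{i}(\alpha_{X})\big)$ equals $\prod_{X} \alpha_{X}^{m^{H}(X)}$ up to a nonzero scalar.

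The main obstacle is the existence of the lifts $\tilde{\theta}_{i}$ with the predicted degrees. Unlike the simple case, $D(\mathcal{A}^{H}, m^{H})$ is not obtained functorially from $D(\mathcal{A}^{H})$, so one must invoke a cohomological vanishing argument---typically Cohen--Macaulayness of logarithmic derivation modules for free arrangements combined with a careful Hilbert-series comparison---to guarantee that no obstruction to lifting appears. A more hands-on alternative is to induct on $|\mathcal{A}|$ using Terao's addition theorem: locate an $H' \in \mathcal{A}\setminus\{H\}$ whose removal preserves both freeness of $\mathcal{A}^{H}$ and the divisibility hypothesis, then conclude via the inductive hypothesis plus addition; the subtle point in this route is that the existence of such an $H'$ is itself a nontrivial Hilbert-series matching problem, so either path ultimately requires the same degree bookkeeping as the core input.
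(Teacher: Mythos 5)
This statement is not proved in the paper at all: it is quoted verbatim as Abe's division theorem from \cite{abe2016divisionally-im}, so there is no in-paper argument to compare against; the relevant benchmark is Abe's original proof.

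Measured against that, your sketch points at the right machinery (Ziegler's multirestriction $\big(\mathcal{A}^{H}, m^{H}\big)$ and Yoshinaga's criterion) but has a genuine gap exactly where the theorem is hard. First, freeness of the simple restriction $\mathcal{A}^{H}$ gives essentially no control over $D\big(\mathcal{A}^{H}, m^{H}\big)$: there is no functorial or ``lifting'' relationship between $D\big(\mathcal{A}^{H}\big)$ and the multi-derivation module, and no cohomological vanishing statement that produces generators of $D\big(\mathcal{A}^{H}, m^{H}\big)$ in degrees read off from $\chi(\mathcal{A},t)/(t-d)$. Asserting that ``the lifts can be chosen in degrees $e_{1},\dots,e_{\ell-1}$'' is assuming the conclusion; producing them is the content of the theorem. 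Second, your plan to verify the local-freeness input to Yoshinaga's criterion ``inductively by applying the theorem to each proper localization $\mathcal{A}_{X}$'' does not get off the ground, because the hypotheses do not localize: for a flat $X$ there is no reason that $\chi\big((\mathcal{A}_{X})^{H}, t\big)$ divides $\chi(\mathcal{A}_{X}, t)$, nor that $(\mathcal{A}_{X})^{H}$ is free. Abe's actual proof supplies precisely the missing mechanism: he establishes an inequality for the second Betti numbers, $b_{2}(\mathcal{A}) \geq b_{2}\big(\mathcal{A}^{H}\big) + |\mathcal{A}^{H}|\big(|\mathcal{A}| - |\mathcal{A}^{H}| - 1\big)$, which holds \emph{locally at every rank-two flat in $H$}; the global divisibility hypothesis forces global equality, and since the inequality holds termwise over rank-two flats, equality must hold locally everywhere. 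These local equalities are what yield freeness of the Ziegler restriction and the local conditions in Yoshinaga's criterion, by induction on $\ell$. Without some analogue of this ``global equality forces local equality'' step, neither of your two proposed routes (the lifting route or the addition-deletion route) closes.
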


Theorem \ref{Abe division} leads to the concepts of a divisional flag, which is originally defined for arrangements.
However, we define it for simple matroids as follows.

\begin{Definition}\label{definition divisional flag}
A \textit{divisional flag} of a simple matroid $ M $ of rank $ n $ is a sequence of flats \begin{align*}
\varnothing = X_{0} \subseteq X_{1} \subseteq \dots \subseteq X_{n} = E
\end{align*}
such that $ r(X_{i}) = i $ for $ i \in \{0, \dots, n \} $ and $ \chi(M/X_{i+1},t) \,|\, \chi(M/X_{i}, t) $ for $ i \in \{0, \dots, n-1 \} $.
\end{Definition}
Note that since $ \chi(M/X_{n},t)=1 $ and $ \chi(M/X_{n-1},t) = t-1 $ in the definition, the conditions $ \chi(M/X_{n},t) \,|\, \chi(M/X_{n-1},t) $ and $ \chi(M/X_{n-1},t) \,|\, \chi(M/X_{n-2},t) $ are always satisfied.

\begin{Definition}\label{definition divisional freeness}
An arrangement called \textit{divisionally free} if the matroid $ M(\mathcal{A}) $ has a divisional flag.
\end{Definition}
Note that, by Theorem \ref{Abe division}, every divisionally free arrangement is free. In order to find a~divisional flag, define a divisional atom as follows.

\begin{Definition}An atom $ e $ of a simple matroid $ M $ is called \textit{divisional} if $ \chi(M/e,t) $ divides $ \chi(M,t) $.
\end{Definition}

\begin{Proposition}\label{divisional flag}
A nonempty simple matroid $ M $ has a divisional flag if and only if there exists a divisional atom $ e $ such that $ \si(M/e) $ has a divisional flag.
\end{Proposition}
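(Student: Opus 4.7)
The plan is to set up a direct bijection between divisional flags of $M$ and pairs $(e, \text{flag of }\si(M/e))$ where $e$ is a divisional atom. The key technical point to leverage is that characteristic polynomials depend only on the lattice of flats, so passing to the simplification $\si(M/e)$ does not change any $\chi$-values, and by Proposition \ref{Oxley interval}(\ref{Oxley interval 1}) the flats of $M/e$ (equivalently, of $\si(M/e)$) correspond to the flats of $M$ containing the rank-$1$ flat $\overline{\{e\}}$.

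For the forward direction, I would start with a divisional flag $\varnothing = X_0 \subseteq X_1 \subseteq \dots \subseteq X_n = E$ of $M$. Since $M$ is simple, the rank-$1$ flat $X_1$ is a singleton $\{e\}$, and the divisibility $\chi(M/X_1,t) \mid \chi(M/X_0,t) = \chi(M,t)$ says precisely that $e$ is a divisional atom. I would then show the truncated chain $X_1 \subseteq X_2 \subseteq \dots \subseteq X_n$, viewed inside $[X_1,\hat{1}] \simeq L(M/e) = L(\si(M/e))$, is a divisional flag of $\si(M/e)$: its ranks in this interval are $0,1,\dots,n-1$ as required, and for each $i \geq 1$ the characteristic polynomials $\chi((M/e)/X_i,t)$ and $\chi(M/X_i,t)$ agree because both are computed from the interval $[X_i,\hat{1}]$, so the divisibility conditions transfer verbatim.

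For the backward direction I would reverse the construction. Given a divisional atom $e$ and a divisional flag $Y_0 \subseteq Y_1 \subseteq \dots \subseteq Y_{n-1}$ of $\si(M/e)$, I would lift it via the isomorphism $L(\si(M/e)) \simeq [\overline{\{e\}},\hat{1}]$ to a chain $X_1 \subseteq X_2 \subseteq \dots \subseteq X_n = E$ of flats of $M$ with $X_1 = \overline{\{e\}}$, and prepend $X_0 = \varnothing$. The rank conditions are automatic from the isomorphism shifting rank by $1$. The divisibility $\chi(M/X_{i+1},t)\mid\chi(M/X_i,t)$ for $i\geq 1$ again transfers directly from the flag of $\si(M/e)$ via the identification of characteristic polynomials, and the remaining case $\chi(M/X_1,t)\mid\chi(M,t)$ is exactly the hypothesis that $e$ is divisional.

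The main obstacle is essentially bookkeeping: making the rank shift and the identification $L(M/e)=L(\si(M/e))$ precise, and justifying that $\chi((M/e)/Y,t) = \chi(M/X,t)$ whenever $Y \in L(\si(M/e))$ corresponds to $X \in [\overline{\{e\}},\hat{1}]$. Both follow from Proposition \ref{Oxley interval}(\ref{Oxley interval 1}) together with the fact that $\chi$ is a lattice invariant, so no deeper matroid-theoretic input beyond what is already stated in Section \ref{sec: preliminaries} is needed.
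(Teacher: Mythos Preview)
Your proposal is correct and follows essentially the same approach as the paper: take $e$ to be the atom in $X_1$, use the isomorphism $[e,\hat{1}] \simeq L(\si(M/e))$ from Proposition~\ref{Oxley interval}(\ref{Oxley interval 1}) to transfer the truncated flag, and reverse the construction for the converse. The paper's proof is a two-sentence sketch of exactly this argument; you have simply spelled out the bookkeeping that the paper leaves implicit.
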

\begin{proof}
Suppose that $ M $ has a divisional flag $ \varnothing = X_{0} \subseteq X_{1} \subseteq \dots \subseteq X_{n} = E $.
Then $ e \coloneqq X_{1} $ is a divisional atom and the images of $ X_{1}, \dots, X_{n} $ under the isomorphism $ [e, \hat{1}] \simeq L(\si(M/e)) $ given by Proposition \ref{Oxley interval}(\ref{Oxley interval 1}) form a divisional flag of $ \si(M/e) $.
The converse holds by a similar argument.
\end{proof}

Abe \cite[Proposition 5]{abe2017restrictions-pilt} proved that every supersolvable arrangement is divisionally free by constructing a divisional flag from a saturated chain of modular flats. The following lemma is a~generalization for simple matroids.

\begin{Lemma}\label{modular coatom divisional}
Let $ M $ be a simple matroid on the ground set $ E $ and $ X $ a modular coatom of $ M $.
Then every atom $ e \in E \setminus X $ is divisional and $ \si(M/e) \simeq M|X $.
In particular, every supersolvable matroid has a divisional flag.
\end{Lemma}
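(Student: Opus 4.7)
The plan is to first identify the simplified contraction $\si(M/e)$ with the restriction $M|X$ using Brylawski's equivalent formulations of modularity, and then to read off divisionality as an immediate consequence of Stanley's division theorem. The supersolvable statement then follows by induction on the rank via Proposition \ref{divisional flag}.

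The heart of the argument is the lattice-theoretic identification. Since $e$ is an atom with $e \not\leq X$ and $X$ is a coatom, we get $X \wedge e = \hat{0}$ and $X \vee e = \hat{1}$ for free. Applying Proposition \ref{brylawski modular equivalent}(\ref{brylawski modular equivalent 3}) to the modular element $X$ with $Y = e$ yields a lattice isomorphism $[\hat{0}, X] \simeq [e, \hat{1}]$. Proposition \ref{Oxley interval} identifies the left-hand side with $L(M|X)$ and the right-hand side with $L(M/e)$, and simplification does not alter the lattice of flats, so $L(\si(M/e)) \simeq L(M|X)$. Theorem \ref{simple matroid geometric lattice correspondence} then upgrades this chain of lattice isomorphisms to the matroid isomorphism $\si(M/e) \simeq M|X$.

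Divisionality of $e$ now drops out. Because characteristic polynomials depend only on the lattice of flats, $\chi(M/e,t) = \chi(\si(M/e),t) = \chi(M|X,t) = \chi\big(\big[\hat{0}, X\big],t\big)$. Stanley's theorem (Theorem \ref{stanley division}), applied to the modular element $X$ of $L(M)$, says precisely that $\chi\big(\big[\hat{0},X\big],t\big)$ divides $\chi(L(M),t) = \chi(M,t)$, so $e$ is divisional.

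For the supersolvable case, I would induct on the rank. A supersolvable simple matroid admits a saturated chain $\hat{0} = X_{0} < X_{1} < \dots < X_{n} = \hat{1}$ of modular flats. The coatom $X_{n-1}$ is modular, so any atom $e \not\leq X_{n-1}$ is divisional with $\si(M/e) \simeq M|X_{n-1}$ by the part just proved. Each $X_{i}$ with $i < n$ remains modular in $M|X_{n-1}$, since the defining rank identity of modularity automatically restricts to elements of $\big[\hat{0}, X_{n-1}\big]$; consequently $X_{0} < \dots < X_{n-1}$ witnesses supersolvability of $M|X_{n-1}$, which has strictly smaller rank. The inductive hypothesis supplies a divisional flag for $\si(M/e)$, and Proposition \ref{divisional flag} assembles everything into a divisional flag for $M$. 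No step presents a real obstacle; the only genuine content is recognizing that Brylawski's condition (\ref{brylawski modular equivalent 3}) and Stanley's theorem dovetail to deliver both the structural isomorphism and the divisibility with essentially no extra work.
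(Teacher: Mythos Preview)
Your proof is correct and follows essentially the same approach as the paper: both arguments use the complement relation $X \wedge e = \hat{0}$, $X \vee e = \hat{1}$ together with Proposition~\ref{brylawski modular equivalent}(\ref{brylawski modular equivalent 3}) to obtain $[\hat{0},X] \simeq [e,\hat{1}]$, invoke Stanley's theorem for divisionality, and then induct via Proposition~\ref{divisional flag} for the supersolvable case. Your write-up is somewhat more explicit than the paper's (you spell out the role of Proposition~\ref{Oxley interval} and Theorem~\ref{simple matroid geometric lattice correspondence}, and you justify why $M|X_{n-1}$ inherits supersolvability, which the paper simply asserts), but the content is the same.
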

\begin{proof}
Take an atom $ e \in E\setminus X $, and $ e $ is a complement of $ X $, that is, $ X \wedge e = \hat{0} $ and $ X \vee e = \hat{1} $.
By Proposition \ref{brylawski modular equivalent}(\ref{brylawski modular equivalent 3}), we have $ \big[\hat{0}, X\big] \simeq \big[e, \hat{1}\big] $, which implies $ M|X \simeq \si(M/e) $.
Moreover, by Theo\-rem~\ref{stanley division}, the characteristic polynomial $ \chi(M/e,t) = \chi(\si(M/e),t) = \chi(M|X,t) = \chi\big(\big[\hat{0}, X\big], t\big) $ divides $ \chi(M,t) $ and hence $ e $ is divisional.

When $ M $ is supersolvable, $ \si(M/e) $ is supersolvable.
Therefore $ M $ has a divisional flag by induction and Proposition \ref{divisional flag}.
\end{proof}

\subsection{Modular joins}

We review some properties of modular joins and will show a relation between modular joins and divisional atoms.

\begin{Proposition}[see also Ziegler {\cite[Lemma 3.10]{ziegler1991binary-potams}}]\label{Ziegler case of modular coatom}
Let $ X $ be a minimal flat of a simple matroid $ M $ such that $ M $ is a modular join $ M = P_{X}(M_{1}, M_{2}) $ over $ X $.
If $ M_{1} $ has a modular coatom, then $ M_{1} $ has a divisional atom not belonging to $ X $.
\end{Proposition}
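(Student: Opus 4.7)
The plan is to pick a modular coatom $ Y $ of $ M_{1} $ and look for an atom $ e \in E_{1} \setminus Y $ that does not lie in $ X $. Once such an atom is produced, Lemma~\ref{modular coatom divisional} applied to the modular coatom $ Y $ of $ M_{1} $ immediately asserts that $ e $ is divisional in $ M_{1} $, which gives the conclusion.

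Before the contradiction argument I would record two preliminaries. First, Proposition~\ref{Brylawski modular tower} applied to $ Y $ (modular in $ M|E_{1} = M_{1} $) inside $ E_{1} $ (modular in $ M $) shows that $ Y $ is in fact a modular flat of the ambient matroid $ M $. Second, $ X \subsetneq E_{1} $, since $ X = E_{1} $ would force $ E_{1} \subseteq E_{2} $, hence $ E = E_{2} $, contradicting the properness of $ E_{2} $.

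The heart of the proof is then a contradiction. Assume every atom of $ E_{1} \setminus Y $ lies in $ X $, so that $ E_{1} \subseteq Y \cup X $, and using $ X \subseteq E_{2} $ the whole ground set splits as $ E = Y \cup E_{2} $. If $ X \subseteq Y $, then $ E_{1} \setminus Y \subseteq X \subseteq Y $ forces $ E_{1} = Y $, contradicting $ Y $ being a proper coatom of $ M_{1} $. Otherwise $ X \not\subseteq Y $, so $ X \cap Y \subsetneq X $; and since $ Y \subseteq E_{1} $ we have $ Y \cap E_{2} \subseteq E_{1} \cap E_{2} = X $, whence $ Y \cap E_{2} = X \cap Y $. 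Combined with $ Y $ and $ E_{2} $ being proper modular flats of $ M $ whose union is $ E $, this exhibits $ M = P_{X \cap Y}(M|Y, M_{2}) $ as a modular join over a flat strictly smaller than $ X $, contradicting the minimality of $ X $.

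The step I expect to be most delicate is the reconfiguration in the second case: one must lift the modularity of $ Y $ from $ M_{1} $ up to $ M $, carefully verify the set-theoretic identity $ Y \cap E_{2} = X \cap Y $ together with the strict inequality $ X \cap Y \subsetneq X $, and confirm that the pair $ (Y, E_{2}) $ really forms a valid modular-join decomposition of $ M $, so that the minimality hypothesis on $ X $ can be invoked to deliver the contradiction.
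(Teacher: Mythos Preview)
Your proof is correct and follows essentially the same strategy as the paper's: take a modular coatom $Y$ of $M_1$, assume $E_1 \setminus Y \subseteq X$, and exhibit $M$ as a modular join $P_{Y\cap X}(M|Y,M_2)$ over the strictly smaller flat $Y\cap X$, contradicting minimality. The paper's version is terser---it does not spell out the appeal to Proposition~\ref{Brylawski modular tower} and skips your case split (the case $X\subseteq Y$ is in fact vacuous, since $E_1\setminus Y\neq\varnothing$ already produces an element of $X\setminus Y$, giving $Y\cap X\subsetneq X$ directly)---but the underlying argument is identical.
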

\begin{proof}
Let $ Z \subseteq E_{1} $ be a modular coatom of $ M_{1} $, where $ E_{1} $ denotes the ground set of $ M_{1} $.
By Lemma \ref{modular coatom divisional}, every element in $ E_{1}\setminus Z $ is a divisional atom of $ M_{1} $.
Assume that $ E_{1}\setminus Z \subseteq X $.
Then $ Z \cap X \subsetneq X $ and $ M $ is a modular join $ M = P_{Z \cap X}(M|Z, M_{2}) $ over $ Z \cap X $, which is a contradiction to the minimality of $ X $.
Hence $ E_{1}\setminus Z \not\subseteq X $ and every element $ E_{1}\setminus(Z \cup X) $ is a desired atom.
\end{proof}

\begin{Theorem}[Brylawski {\cite[Theorem 7.8]{brylawski1975modular-totams}}]\label{Brylawski decomposition}
Let $ M = P_{X}(M_{1}, M_{2}) $ be a modular join.
Then
\begin{align*}
\chi(M,t) = \frac{\chi(M_{1},t) \, \chi(M_{2},t)}{\chi(M|X,t)}.
\end{align*}
\end{Theorem}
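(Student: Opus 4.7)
The plan is to prove this by explicitly computing $\chi(M,t)$ using the Möbius function and the structure of $L(M)$ inherited from the modular join decomposition. The key enabling fact is Stanley's division theorem (Theorem~\ref{stanley division}): since $X$ is modular in both $M_{1}$ and $M_{2}$, the polynomial $\chi(M|X,t)$ divides $\chi(M_{1},t)$ and $\chi(M_{2},t)$, so the right-hand side is a priori a polynomial and one only needs to verify the identity.

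First, I would establish the combinatorial backbone: every flat $F$ of $M$ decomposes as $F = F_{1} \vee F_{2}$ with $F_{i} \coloneqq \cl_{M_{i}}(F \cap E_{i})$ and $F_{1} \wedge F_{2} = F_{1} \cap X = F_{2} \cap X \eqqcolon G$, a flat of $M|X$. Conversely, every triple $(F_{1}, F_{2}, G)$ with $F_{i} \in L(M_{i})$ and $F_{1} \cap X = F_{2} \cap X = G$ arises this way. Moreover, by modularity of $X$ in $M$ (combined with the rank formula $r(M) = r(M_{1}) + r(M_{2}) - r(X)$ of Brylawski), the rank function respects the decomposition:
\begin{gather*}
r(F_{1} \vee F_{2}) = r_{M_{1}}(F_{1}) + r_{M_{2}}(F_{2}) - r_{M|X}(G).
\end{gather*}

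Next, I would prove the corresponding factorization of the Möbius function, which is the heart of the argument:
\begin{gather*}
\mu_{M}(F_{1} \vee F_{2}) \, \chi(M|X, t)\big|_{\text{restricted term}} \;=\; \mu_{M_{1}}(F_{1}) \, \mu_{M_{2}}(F_{2})\quad \text{(suitably interpreted)}.
\end{gather*}
Concretely, I would use Proposition~\ref{brylawski modular equivalent}(\ref{brylawski modular equivalent 3}) for the modular element $X$ (in each of $M_{1}, M_{2}, M$) to identify intervals $[G, F_{i}]$ in $L(M_{i})$ with intervals above $G$ in the appropriate sublattice of $L(M)$, and then sum. Equivalently, I would invoke Stanley's identity (the refinement used in the proof of Theorem~\ref{stanley division}) expressing $\chi(M_{i},t)/\chi(M|X,t) = \sum_{F_{i} \wedge X = \hat{0}} \mu_{M_{i}}(F_{i}) t^{r(M_{i}) - r(X) - r(F_{i})}$, and similarly for $M$ with $X$ modular in $M$.

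Finally, assembling the pieces, I would write
\begin{gather*}
\chi(M,t) = \sum_{G \in L(M|X)} \Bigg( \sum_{\substack{F_{1} \in L(M_{1}) \\ F_{1} \cap X = G}} \mu_{M_{1}}(F_{1}) t^{r(M_{1})-r(F_{1})} \Bigg) \Bigg( \sum_{\substack{F_{2} \in L(M_{2}) \\ F_{2} \cap X = G}} \mu_{M_{2}}(F_{2}) t^{r(M_{2})-r(F_{2})} \Bigg) \frac{t^{r(G)}}{?},
\end{gather*}
and use the modular decomposition $\chi(M_{i},t) = \chi(M|X,t) \cdot q_{i}(t)$ to recognize the right-hand side as $\chi(M_{1},t)\chi(M_{2},t)/\chi(M|X,t)$. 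The main obstacle is the Möbius factorization step: verifying that the cross terms in $\mu_{M}$ cancel correctly against the $\chi(M|X,t)$ denominator. I expect this to come down to a clean double-counting argument using condition~(\ref{brylawski modular equivalent 3}) of Proposition~\ref{brylawski modular equivalent}, applied simultaneously in $M_{1}$, $M_{2}$, and $M$, rather than a brute computation with the recursive Möbius definition.
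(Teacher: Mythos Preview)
The paper does not prove Theorem~\ref{Brylawski decomposition} at all: it is stated as a citation to Brylawski~\cite[Theorem~7.8]{brylawski1975modular-totams} and used as a black box, so there is no ``paper's own proof'' to compare your proposal against.

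As for the viability of your sketch, the architecture is sound---the lattice $L(M)$ of a modular join really is the fiber product of $L(M_{1})$ and $L(M_{2})$ over $L(M|X)$, the rank is additive in the way you wrote, and Stanley's modular factorization (Theorem~\ref{stanley division}) is indeed the right lever. However, your proposal is explicitly unfinished: the displayed M\"obius identity carries the disclaimer ``suitably interpreted'', and your final assembly has a literal ``$?$'' in the denominator. The step you flag as the ``main obstacle'' is genuinely the whole proof, and it does not reduce to a single clean application of Proposition~\ref{brylawski modular equivalent}(\ref{brylawski modular equivalent 3}). The efficient route is not to factor $\mu_{M}$ flat-by-flat but to apply Stanley's complement formula once in $M$ with the modular element $E_{1}$: this gives $\chi(M,t)=\chi(M_{1},t)\cdot\sum_{Y\wedge E_{1}=\hat{0}}\mu_{M}(Y)\,t^{r(M)-r(E_{1})-r(Y)}$, and then one identifies the complement sum with $\chi(M_{2},t)/\chi(M|X,t)$ by observing that complements of $E_{1}$ in $L(M)$ correspond (via Proposition~\ref{brylawski modular equivalent}(\ref{brylawski modular equivalent 3}) applied to the modular element $E_{2}$) to complements of $X$ in $L(M_{2})$. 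Your double sum over $G\in L(M|X)$ with inner sums over $F_{i}\cap X=G$ can be made to work, but it requires a separate M\"obius-inversion argument that you have not supplied.
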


The following proposition is essentially due to Brylawski for generalized parallel connections.
However, since we treat the special case of modular joins, we give a proof of the proposition below.
\begin{Proposition}[Brylawski {\cite[Theorem 5.11.4]{brylawski1975modular-totams}}]\label{Brylawski contraction}
Let $ M $ be a modular join $ M=P_{X}(M_{1},M_{2}) $ and $ e $ an atom of $ M_{1} $ not belonging to $ X $.
Then $ \si(M/e) $ is isomorphic to a modular join of $ \si(M_{1}/e) $ and $ M_{2}, $ and
\begin{align*}
\chi(\si(M/e),t) = \dfrac{\chi(\si(M_{1}/e),t) \chi(M_{2}, t)}{\chi(M|X, t)}.
\end{align*}
\end{Proposition}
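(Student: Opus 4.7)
The plan is to realize $ \si(M/e) $ as the modular join of $ \si(M_1/e) $ and $ M_2 $ over the common restriction $ M|X $; the characteristic polynomial identity then follows immediately from Theorem \ref{Brylawski decomposition}.

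First I would produce the two requisite modular flats. Since $ E_1 $ and $ E_2 $ are both modular flats of $ M $ (by the Remark following the definition of modular join) and $ e \leq E_1 $ while $ e \not\leq E_2 $ (because $ e \in E_1 \setminus X $ and $ X = E_1 \cap E_2 $), I would check that under the canonical identification $ L(M/e) \simeq [e, \hat{1}] $ of Proposition \ref{Oxley interval}(\ref{Oxley interval 1}), the flats $ E_1 $ and $ E_2 \vee e $ are both modular. The flat $ E_2 \vee e $ is modular in $ [e, \hat{1}] $ by Proposition \ref{brylawski modular equivalent}(\ref{brylawski modular equivalent 4}), while $ E_1 $ stays modular in $ [e, \hat{1}] $ because $ e \leq E_1 $ implies that the rank identity $ r(E_1) + r(Y) = r(E_1 \wedge Y) + r(E_1 \vee Y) $ survives the shift $ r \mapsto r - 1 $ on $ [e, \hat{1}] $ (using that $ e \leq E_1 \wedge Y $ for any $ Y \geq e $). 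Passing to $ \si(M/e) $ preserves the lattice of flats and hence modularity.

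Next I would compute joins, meets, and restrictions. Obviously $ E_1 \vee (E_2 \vee e) = E_1 \vee E_2 = \hat{1} $. Using modularity of $ E_2 $ via Proposition \ref{brylawski modular equivalent}(\ref{brylawski modular equivalent 2}) with $ Y = e $ and $ Z = E_1 $, one obtains $ (E_2 \vee e) \wedge E_1 = e \vee (E_2 \wedge E_1) = X \vee e $. For the restrictions, the interval identification of Proposition \ref{Oxley interval} shows that the restriction of $ \si(M/e) $ to (the image of) $ E_1 $ is exactly $ \si(M_1/e) $, since $ [e, E_1] $ in $ L(M) $ coincides with $ [e, E_1] $ in $ L(M_1) $. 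Applying Proposition \ref{brylawski modular equivalent}(\ref{brylawski modular equivalent 3}) to the modular flat $ E_2 $ with $ Y = e $ (using $ E_2 \wedge e = \hat{0} $) yields $ [\hat{0}, E_2] \simeq [e, E_2 \vee e] $, so the restriction to $ E_2 \vee e $ is $ M_2 $. The same argument applied to $ X $ in place of $ E_2 $ shows that the restriction to the common flat $ X \vee e $ is $ M|X $.

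This exhibits $ \si(M/e) $ as the modular join $ P_{X \vee e}(\si(M_1/e), M_2) $ over a flat isomorphic to $ M|X $, and Theorem \ref{Brylawski decomposition} applied to this decomposition then gives the claimed factorization of characteristic polynomials. The main obstacle is purely the bookkeeping between $ M $, $ M/e $, and $ \si(M/e) $, together with the careful use of the various modular identities of Proposition \ref{brylawski modular equivalent} to identify each candidate flat and verify its restriction; once the four flats $ E_1 $, $ E_2 \vee e $, their meet $ X \vee e $, and their join $ \hat{1} $ are pinned down, the modular join structure and Theorem \ref{Brylawski decomposition} finish the proof mechanically.
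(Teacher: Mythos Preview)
Your approach is essentially identical to the paper's: work in the interval $[e,\hat 1]\simeq L(\si(M/e))$, take $E_1$ and $E_2\vee e$ as the two modular flats (using Proposition~\ref{brylawski modular equivalent}(\ref{brylawski modular equivalent 4}) for the latter and a direct argument for the former, where the paper instead cites Proposition~\ref{brylawski modular equivalent}(\ref{brylawski modular equivalent 2})), identify their meet as $X\vee e$, identify the three restrictions via Proposition~\ref{brylawski modular equivalent}(\ref{brylawski modular equivalent 3}), and finish with Theorem~\ref{Brylawski decomposition}.

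There is one small omission. To conclude that $\si(M/e)$ is a modular join you need that every atom of $[e,\hat 1]$ lies below $E_1$ or below $E_2\vee e$; the equality $E_1\vee(E_2\vee e)=\hat 1$ that you record only says the closure of their union is everything, which is strictly weaker. The paper fills this by observing that each atom of $[e,\hat 1]$ is of the form $e\vee e'$ with $e'\in E=E_1\cup E_2$, hence is $\leq E_1$ (if $e'\in E_1$) or $\leq E_2\vee e$ (if $e'\in E_2$). With that one line added, your argument is complete and matches the paper's.
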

\begin{proof}
Let $ E_{1} $ and $ E_{2} $ be the ground sets of $ M_{1} $ and $ M_{2} $, which are modular flats of~$ M $.
Take an atom $ e \in E_{1} \setminus X $.
The matroid $ \si(M/e) $ corresponds the interval $ \big[e, \hat{1}\big] $ of~$ L(M) $ under the correspondence mentioned in Proposition \ref{simple matroid geometric lattice correspondence}.
Note that $ E_{1} $ is modular in $ \big[e, \hat{1}\big] $ by Proposition~\ref{brylawski modular equivalent}(\ref{brylawski modular equivalent 2}) and $ E_{2} \vee e $ is modular in $ \big[e, \hat{1}\big] $ by Proposition~\ref{brylawski modular equivalent}(\ref{brylawski modular equivalent 4}).

The atoms of $ \si(M/e) $ are identified with the atoms of the interval $ \big[e, \hat{1}\big] $.
These atoms coincide with $ \{ e \vee e^{\prime} \,|\, e^{\prime} \in E\setminus \{e\} \} $, where $ E = E_{1} \cup E_{2} $ denotes the ground set of $ M $.
If $ e^{\prime} \in E_{1} $, then $ e \vee e^{\prime} \leq E_{1} $.
Suppose that $ e^{\prime} \in E_{2} $.
Then $ e \vee e^{\prime} \leq e \vee E_{2} $.
Thus $ \si(M/e) $ is a modular join of matroids corresponding to $ [e, E_{1}] $ and $ [e, E_{2} \vee e] $.
The matroid corresponding to $ [e, E_{1}] $ is isomorphic to $ \si(M_{1}/e) $.
By Proposition~\ref{brylawski modular equivalent}(\ref{brylawski modular equivalent 3}), $ [e, E_{2} \vee e] \simeq \big[\hat{0}, E_{2}\big] $. Hence the matroid corresponding to $ [e, E_{2} \vee e] $ is isomorphic to~$ M_{2} $.
Thus $ \si(M/e) $ is isomorphic to a modular join of $ \si(M_{1}/e) $ and~$ M_{2} $.

By Proposition \ref{brylawski modular equivalent}(\ref{brylawski modular equivalent 2}), $ E_{1} \wedge (E_{2} \vee e) = (E_{1} \wedge E_{2}) \vee e = X \vee e $.
Using Theorem \ref{Brylawski decomposition} and Proposition \ref{brylawski modular equivalent}(\ref{brylawski modular equivalent 3}), we have
\begin{align*}
\chi(\si(M/e),t)& = \dfrac{\chi([e, E_{1}],t) \, \chi([e, E_{2}\vee e], t)}{\chi([e,X \vee e]), t}
= \dfrac{\chi([e, E_{1}],t) \chi\big(\big[\hat{0}, E_{2}\big], t\big)}{\chi\big(\big[\hat{0},X\big]\big), t} \\
&= \dfrac{\chi(\si(M_{1}/e),t) \chi(M_{2}, t)}{\chi(M|X, t)}.\tag*{\qed}
\end{align*}\renewcommand{\qed}{}
\end{proof}

\begin{Lemma}\label{divisional atom of modular join}
Let $ M $ be a modular join $ M = P_{X}(M_{1},M_{2}) $.
Every divisional atom of $ M_{1} $ not belonging to $ X $ is a divisional atom of $ M $.
\end{Lemma}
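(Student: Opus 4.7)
The plan is to combine the two formulas immediately preceding the lemma: Theorem~\ref{Brylawski decomposition} applied to $M = P_X(M_1, M_2)$ and Proposition~\ref{Brylawski contraction} applied to the same modular join with our chosen atom $e \in E_1 \setminus X$. Both formulas share the factor $\chi(M_2, t) / \chi(M|X, t)$, and once we cancel it, the divisibility $\chi(M/e, t) \mid \chi(M, t)$ reduces to the divisibility $\chi(M_1/e, t) \mid \chi(M_1, t)$ that we are given by hypothesis.

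Concretely, I would first note that characteristic polynomials are invariants of the lattice of flats, so $\chi(M/e,t) = \chi(\si(M/e),t)$ and likewise $\chi(M_1/e,t) = \chi(\si(M_1/e),t)$; this lets us pass between the contractions and their simplifications freely. Then Theorem~\ref{Brylawski decomposition} gives
\begin{gather*}
\chi(M,t) = \frac{\chi(M_1,t)\,\chi(M_2,t)}{\chi(M|X,t)},
\end{gather*}
while Proposition~\ref{Brylawski contraction} (applicable because $e \in E_1 \setminus X$) gives
\begin{gather*}
\chi(M/e,t) = \chi(\si(M/e),t) = \frac{\chi(\si(M_1/e),t)\,\chi(M_2,t)}{\chi(M|X,t)} = \frac{\chi(M_1/e,t)\,\chi(M_2,t)}{\chi(M|X,t)}.
\end{gather*}

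Dividing the first displayed equality by the second, the factor $\chi(M_2,t)/\chi(M|X,t)$ cancels and we obtain
\begin{gather*}
\frac{\chi(M,t)}{\chi(M/e,t)} = \frac{\chi(M_1,t)}{\chi(M_1/e,t)}.
\end{gather*}
The right-hand side is a polynomial in $\mathbb{Z}[t]$ by the hypothesis that $e$ is a divisional atom of $M_1$, hence so is the left-hand side. Therefore $\chi(M/e,t)$ divides $\chi(M,t)$, which is exactly the assertion that $e$ is a divisional atom of $M$.

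There is essentially no obstacle here beyond lining up the two formulas correctly; the only subtlety is making sure $e$ satisfies the hypotheses of Proposition~\ref{Brylawski contraction} (namely $e \in E_1 \setminus X$, which is given), and remembering that simplification does not affect the characteristic polynomial so that ``divisional'' can be tested using either $M/e$ or $\si(M/e)$.
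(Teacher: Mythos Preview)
Your proof is correct and is essentially the paper's own argument: both combine Theorem~\ref{Brylawski decomposition} with Proposition~\ref{Brylawski contraction} to cancel the common factor $\chi(M_{2},t)/\chi(M|X,t)$, the only cosmetic difference being that the paper writes the quotient $\chi(M_{1},t)/\chi(M_{1}/e,t)$ explicitly as a linear factor $(t-a)$ before substituting, whereas you phrase it as a ratio identity.
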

\begin{proof}
Let $ e $ be a divisional atom of $ M_{1} $ such that $ e \not\in X $.
Then there exists an integer $ a $ such that $ \chi(M_{1},t) = (t-a)\chi(\si(M_{1}/e),t) $.
Using Proposition \ref{Brylawski contraction}, we have
\begin{align*}
\chi(M,t) = \dfrac{\chi(M_{1},t) \, \chi(M_{2},t)}{\chi(M|X,t)}
= \dfrac{(t-a) \, \chi(\si(M_{1}/e),t) \, \chi(M_{2},t)}{\chi(M|X,t)}
= (t-a)\chi(\si(M/e),t).
\end{align*}
Thus $ e $ is a divisional atom of $ M $.
\end{proof}

\section{Proof of main theorems}\label{sec: proof}

\subsection{Proof of Theorem \ref{main divisionally free}}
\begin{Lemma}\label{restriction modular flat}
The class $ \mathcal{ME} $ is closed under taking restrictions to modular flats.
\end{Lemma}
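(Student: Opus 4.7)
The plan is to prove this by structural induction on the derivation of $M$ in $\mathcal{ME}$, using the three clauses of Definition \ref{modularly extended matroid}. The only external ingredient beyond the inductive hypothesis is the elementary observation that \emph{if $A \leq B$ are flats of a simple matroid $N$ and $A$ is modular in $N$, then $A$ is modular in $N|B$}: by Proposition \ref{Oxley interval}(\ref{Oxley interval 2}) the lattice $L(N|B)$ is the interval $[\hat{0}, B]$, so for any $Y \leq B$ the rank equality $r(A) + r(Y) = r(A \wedge Y) + r(A \vee Y)$ takes place inside $[\hat{0},B]$ and transfers verbatim.

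The base case (empty matroid) is immediate. For the inductive step coming from clause (ii), suppose $M$ has a modular coatom $X$ with $M|X \in \mathcal{ME}$, and let $F$ be a modular flat of $M$. Set $F' := F \cap X$: by Proposition \ref{Brylawski modular intersection} it is modular in $M$, hence in $M|X$ by the observation, so the induction hypothesis applied to $M|X$ gives $M|F' = (M|X)|F' \in \mathcal{ME}$. If $F \leq X$ this already proves $M|F \in \mathcal{ME}$. Otherwise $F \vee X = \hat{1}$, and the modular identity $r(F) + r(X) = r(F') + r(F \vee X)$ forces $r(F') = r(F) - 1$; thus $F'$ is a modular coatom of $M|F$, and clause (ii) applied to $M|F$ finishes this case.

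For the inductive step coming from clause (iii), suppose $M = P_X(M_1, M_2)$ with $X$ round and $M_1, M_2 \in \mathcal{ME}$, and write $E_i$ for the ground set of $M_i$. Let $F$ be a modular flat of $M$. If $F \leq E_i$ for some $i$, then $F$ is modular in $M_i$ by the observation, and the induction hypothesis on $M_i$ yields $M|F = M_i|F \in \mathcal{ME}$. Otherwise, setting $F_i := F \cap E_i$, each $F_i$ is a proper subset of $F$ which is modular in $M$ (Proposition \ref{Brylawski modular intersection}) and in $M_i$ (observation), so by induction $M|F_i \in \mathcal{ME}$. Since $F = F_1 \cup F_2$, the restriction $M|F$ is the modular join $P_{F \cap X}(M|F_1, M|F_2)$, and because $F \cap X$ is a modular flat of the round matroid $M|X$, Proposition \ref{Probert} ensures that $F \cap X$ is round, so clause (iii) concludes $M|F \in \mathcal{ME}$.

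The one nontrivial point is case (iii): once we have decomposed $M|F$ as the expected modular join, we still must verify that the new ``gluing flat'' $F \cap X$ is round, and this is exactly where Probert's Proposition \ref{Probert} is essential. The remaining checks — that $F \cap E_i$ is proper, modular, and that the restrictions reassemble as a modular join — are routine once the transfer-of-modularity observation is isolated.
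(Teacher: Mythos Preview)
Your proof is correct and follows essentially the same approach as the paper's: both argue by induction with the same two-case analysis (modular coatom vs.\ modular join), use Proposition~\ref{Brylawski modular intersection} to get modularity of the intersections, and invoke Proposition~\ref{Probert} to verify roundness of the new gluing flat $F\cap X$. The only cosmetic difference is that you phrase the induction structurally on the derivation in $\mathcal{ME}$ while the paper inducts on rank, and you explicitly isolate the ``modularity transfers to restrictions'' observation that the paper uses tacitly.
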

\begin{proof}
Let $ M \in \mathcal{ME} $ and $ X $ a modular flat of $ M $.
We proceed by induction on the rank of $ M $.
The case $ r(M)=0 $ is trivial.
Hence we suppose that $ r(M) \geq 1 $.

First assume that $ M $ has a modular coatom $ Z $ such that $ M|Z \in \mathcal{ME} $.
If $ X \subseteq Z $, then $ X $ is a~modular flat of $ M|Z $.
By the induction hypothesis, $ M|X = (M|Z)|X \in \mathcal{ME} $.
Assume $ X \not\subseteq Z $.
Then $ X \vee Z = E $, the ground set of $ M $.
By Proposition~\ref{Brylawski modular intersection}, $ X \cap Z $ is a modular flat of $ M $ and hence $ M|Z $.
By the induction hypothesis, $ M|(X \cap Z) \in \mathcal{ME} $.
Moreover, by the modularity,
\begin{gather*}
r(X) - r(X \cap Z) = r(X \vee Z) - r(Z) = r(E) - r(Z) = 1.
\end{gather*}
Therefore $ X \cap Z $ is a modular coatom of $ M|X $ and hence $ M|X \in \mathcal{ME} $.

Next we suppose that $ M $ is a modular join $ M = P_{Y}(E_{1}, E_{2}) $ over a round flat $ Y $ with $ M|E_{i} \in \mathcal{ME} $ for $ i = 1,2 $.
If $ X \subseteq E_{i} $ for some $ i $, then $ M|X = (M|E_{i})|X \in \mathcal{ME} $ by the induction hypothesis.
Otherwise, $ X_{i} \coloneqq X \cap E_{i} \neq \varnothing $ is a proper subset of $ X $ and $ M|X_{i} = (M|E_{i})|X_{i} \in \mathcal{ME} $ by the induction hypothesis for $ i = 1,2 $.
Since both $ X_{1} $ and $ X_{2} $ are modular by Proposition~\ref{Brylawski modular intersection} and $ X = X_{1} \cup X_{2} $, it follows that $ M|X $ is a modular join of $ M|X_{1} $ and $ M|X_{2} $.
Moreover $ X_{1} \cap X_{2} = X \cap Y $ is round by Proposition~\ref{Probert}.
Therefore $ M|X \in \mathcal{ME} $.
\end{proof}

\begin{Theorem}\label{main matroid}
Every nonempty simple matroid $ M \in \mathcal{ME} $ has a divisional atom $ e $ such that $ \si(M/e) \in \mathcal{ME} $.
\end{Theorem}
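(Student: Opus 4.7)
The plan is to proceed by strong induction on the rank of $M$. The base case $r(M) \leq 1$ is immediate: a rank-zero matroid is empty, and in rank one the unique atom $e$ satisfies $\si(M/e) = \varnothing \in \mathcal{ME}$ with $\chi(\si(M/e),t) = 1$ dividing $\chi(M,t)$. For the inductive step with $r(M) \geq 2$, $M$ arises via construction (ii) or (iii) of Definition~\ref{modularly extended matroid}. If $M$ has a modular coatom $X$ with $M|X \in \mathcal{ME}$, Lemma~\ref{modular coatom divisional} immediately supplies any atom $e \in E \setminus X$ as a divisional atom with $\si(M/e) \simeq M|X \in \mathcal{ME}$, so I focus on the remaining case.

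Assume henceforth $M = P_X(M_1,M_2)$ is a modular join over a round flat $X$ with $M_1, M_2 \in \mathcal{ME}$, and fix such a decomposition with $X$ minimal under inclusion (Lemma~\ref{restriction modular flat} ensures that restriction to any modular flat produces an $\mathcal{ME}$-member, so this minimality is well-defined). The target is a divisional atom $e \in E_1 \setminus X$ of $M_1$ with $\si(M_1/e) \in \mathcal{ME}$. Given such $e$, Lemma~\ref{divisional atom of modular join} makes it divisional in $M$, and Proposition~\ref{Brylawski contraction} exhibits $\si(M/e)$ as a modular join of $\si(M_1/e)$ and $M_2$---both in $\mathcal{ME}$---glued along the image of $X$, which corresponds to $M|X$ via the isomorphism $[e, X \vee e] \simeq [\hat{0}, X]$ from Proposition~\ref{brylawski modular equivalent}(\ref{brylawski modular equivalent 3}). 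Since $M|X$ is round, the gluing flat is round, so $\si(M/e) \in \mathcal{ME}$.

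To locate $e$, I would mimic the argument in the proof of Proposition~\ref{Ziegler case of modular coatom}. When $M_1$ has a modular coatom $Z$, Lemma~\ref{modular coatom divisional} together with Lemma~\ref{restriction modular flat} certifies every atom of $E_1 \setminus Z$ as a divisional atom of $M_1$ with $\si(M_1/e)\simeq M_1|Z \in \mathcal{ME}$. Either there is such an atom in $E_1\setminus(Z\cup X)$, which serves as $e$, or $E_1 \setminus Z \subseteq X$, in which case $M = P_{Z\cap X}(M|Z, M_2)$ is itself a modular join over $Z \cap X \subsetneq X$. That $Z \cap X$ is round follows from Proposition~\ref{Brylawski modular intersection} together with Proposition~\ref{Probert} applied to the round matroid $M|X$, and $M|Z \in \mathcal{ME}$ by Lemma~\ref{restriction modular flat}, contradicting the minimality of $X$.

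The delicate situation, and the main obstacle I foresee, arises when $M_1$ has no modular coatom. Then $M_1 \in \mathcal{ME}$ must itself stem from construction (iii), so $M_1 = P_Y(N_1, N_2)$ with $Y$ round and $N_1, N_2 \in \mathcal{ME}$; by Proposition~\ref{Brylawski modular tower}, $N_1$ and $N_2$ are modular flats of $M$. To handle this, I would apply the inductive hypothesis to $M_1$ (of strictly smaller rank) to secure a divisional atom $e$ of $M_1$ with $\si(M_1/e) \in \mathcal{ME}$, and argue that $e \in X$ would contradict the minimality of $X$ by exploiting the nested modular flats $N_1, N_2$ of $M$: the idea is to rebracket the decomposition so that a deeper piece of $M_1$ (enjoying a modular coatom or an even finer nested structure) plays the role of one component in a modular join of $M$ over a strictly smaller round flat, again via Propositions~\ref{Brylawski modular tower}, \ref{Brylawski modular intersection}, and~\ref{Probert}. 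Checking that each rebracketed flat is genuinely modular in $M$ is the crux of the argument.
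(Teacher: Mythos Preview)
Your overall structure matches the paper's, and your reduction to finding a divisional atom $e_1 \in E_1 \setminus X$ of $M_1$ with $\si(M_1/e_1) \in \mathcal{ME}$, together with the completion via Lemma~\ref{divisional atom of modular join} and Proposition~\ref{Brylawski contraction}, is correct (including your check that the gluing flat in $\si(M/e)$ stays round). The gap is in your final paragraph. Your induction hypothesis is only the theorem itself, so applying it to $M_1$ yields \emph{some} divisional atom $e$ with $\si(M_1/e) \in \mathcal{ME}$ but no control over whether $e \in X$. Your proposed ``rebracketing'' to contradict the minimality of $X$ is not actually carried out, and you yourself flag that ``checking that each rebracketed flat is genuinely modular in $M$ is the crux''; concretely, you would need something like $N_2 \cup E_2$ to be a modular flat of $M$, which you do not establish.

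The paper avoids this by strengthening the induction hypothesis to a two-part claim: (i) the modular-coatom case as you have it, and (ii) if $X$ is a minimal round flat with $M = P_X(E_1,E_2)$, then for \emph{each} $i\in\{1,2\}$ there is a divisional atom $e_i \in E_i \setminus X$ with $\si(M/e_i) \in \mathcal{ME}$. The key observation you are missing is that roundness of $X$ forces $X \subseteq F$ or $X \subseteq F'$ in any decomposition $M_1 = P_Y(F,F')$: since $F$ and $F'$ are flats of $M_1$, the equality $X = (X \cap F) \cup (X \cap F')$ writes the round matroid $M|X$ as a union of two flats, so one of them is all of $X$. Assuming $X \subseteq F'$, the strengthened hypothesis (ii) applied to $M_1$ (with minimal round $Y$) supplies a divisional atom $e_1 \in F \setminus Y$ with $\si(M_1/e_1) \in \mathcal{ME}$; if $e_1 \in X \subseteq F'$ then $e_1 \in F \cap F' = Y$, a contradiction. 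Hence $e_1 \notin X$ automatically, and no rebracketing is needed.
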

\begin{proof}
We will proof the following claims by induction on the rank of $ M $.
\begin{enumerate}[$(i)$]\itemsep=0pt
\item\label{main matroid claim i} If $ M $ has a modular coatom, then there exists a divisional atom $ e $ such that $ \si(M/e) \in \mathcal{ME} $.
\item\label{main matroid claim ii} If $ X $ is a minimal round flat of $ M $ such that $ M $ is a modular join $ M = P_{X}(E_{1}, E_{2}) $.
Then, for each $ i=1,2 $, there exists a divisional atom $ e_{i} \in E_{i}\setminus X $ such that $ \si(M/e_{i}) \in \mathcal{ME} $.
\end{enumerate}

First suppose that $ r(M) = 1 $, that is, the ground set of $ M $ is a singleton.
Then only the case~(\ref{main matroid claim i}) occurs and the atom of $ M $ satisfies the assertion.

Now suppose that $ r(M) \geq 2 $.
If $ M $ has a modular coatom $ X $, then every atom $ e \in E \setminus X $ is divisional and $ \si(M/e) \simeq M|X \in \mathcal{ME} $ by Lemmas~\ref{modular coatom divisional} and~\ref{restriction modular flat}. Thus the assertion holds.

Next we suppose that $ M $ is a modular join.
We assume that $ X $ is a minimal round flat of $ M $ such that $ M = P_{X}(M_{1},M_{2}) $.
Since every modular flat in $ X $~is also round by Proposition~\ref{Probert}, $ X $~is a minimal flat such that $ M $ is a modular join over $ X $.

We will show that $ M_{1} $ has a divisional atom $ e_{1} $ not belonging to $ X $ such that $ \si(M_{1}/e_{1}) \in \mathcal{ME} $.
Note that $ M_{1} $ is a member of $ \mathcal{ME} $ by Lemma~\ref{restriction modular flat}.
Assume that $ M_{1} $ has a modular coatom.
Then, by Proposition~\ref{Ziegler case of modular coatom}, $ M_{1} $ has a divisional atom not belonging to $ X $ such that $ \si(M_{1}/e_{1}) \in \mathcal{ME} $.
Hence we may assume that $ M_{1} $ has a minimal round flat $ Y $ such that $ M_{1} $ is a modular join $ M_{1} = P_{Y}(F,F^{\prime})$.
Since $ X $ is round, we have $ F \supseteq X $ or $ F^{\prime} \supseteq X $.
Without loss of generality, we may assume that $ F^{\prime} \supseteq X $.
By the induction hypothesis, $ M_{1} $ has a divisional atom $ e_{1} \in F\setminus Y $ such that $ \si(M_{1}/e_{1}) \in \mathcal{ME} $.
Assume that $ e_{1} \in X $.
Then $ e_{1} \in F^{\prime} $ and hence $ e_{1} \in F \cap F^{\prime} = Y $, which contradicts $ e_{1} \not\in Y $.
Thus $ M_{1} $ has a divisional atom $ e_{1} $ not belonging to $ X $ such that $ \si(M_{1}/e_{1}) \in \mathcal{ME} $.

By Lemma~\ref{divisional atom of modular join}, $ e_{1} $ is a divisional atom of $ M $.
Moreover, by Proposition \ref{Brylawski contraction}, $ \si(M/e_{1}) $ is isomorphic to a modular join of $ \si(M_{1}/e_{1}) $ and $ M_{2} $, and hence $ \si(M/e_{1}) \in \mathcal{ME} $.
\end{proof}

\begin{proof}[Proof of Theorem~\ref{main divisionally free}] Use Theorem \ref{main matroid} and Proposition \ref{divisional flag}.
\end{proof}

\subsection{Proof of Theorem \ref{main ss}}
\begin{Lemma}\label{lemma ss}
Let $ X $ be a modular coatom of a simple matroid $ M $ on $ E $ such that $ M|X $ is a~modular join $ M|X=P_{Y}(F_{1},F_{2}) $.
Then $ M $ is a modular join $ P_{Y}(F_{1}^{\prime}, F_{2}) $ or $ P_{Y}(F_{1}, F_{2}^{\prime}) $, where~$ F_{i}^{\prime} $ is some flat of $ M $ such that~$ F_{i} $ is a modular coatom of $ M|F_{i}^{\prime} $ for each~$ i $.
\end{Lemma}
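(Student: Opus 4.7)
My plan is to use the Jambu--Papadima function $f$ on pairs in $E\setminus X$ (Proposition~\ref{Jambu-Papadima}) to show that, after possibly swapping $F_1$ and $F_2$, every pair $\{a,b\}$ has $f(a,b) \in F_1$; then $F_1' := F_1 \vee e$ for any $e \in E\setminus X$ will give the desired extension. First I note that, by Proposition~\ref{Brylawski modular tower} applied to $X$ modular in $M$ and $F_1, F_2, Y$ modular in $M|X$, all three are modular flats of $M$.

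The key combinatorial input is a trichotomy for $3$-circuits of the modular join $M|X=P_Y(F_1,F_2)$: every such $3$-circuit lies entirely in $F_1$ or entirely in $F_2$. If two elements lie in a modular $F_i$, then so does the third (via closure), and the mixed case $u \in F_1\setminus Y$, $v\in F_2\setminus Y$ is ruled out by Brylawski's modular short-circuit axiom (Theorem~\ref{Brylawsky modular short-circuit axiom}) together with simplicity. Applying this trichotomy to the triangle circuits $\{f(a,b),f(a,c),f(b,c)\}$ partitions pairs in $E\setminus X$ into $F_1$-exclusive ($f \in F_1\setminus Y$), $F_2$-exclusive ($f \in F_2 \setminus Y$), and neutral ($f\in Y$) classes, and forces that no atom is incident to both types of exclusive pair. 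The crucial step is to show that both types cannot coexist: assuming otherwise, pick an $F_2$-exclusive pair $\{v_1,v_2\}$ with $q := f(v_1, v_2) \in F_2\setminus Y$ and any $w \in E\setminus X$ incident to an $F_1$-exclusive pair; the edges $\{v_i, w\}$ must be neutral, so $y_i := f(v_i, w) \in Y$, and by simplicity the triangle circuit $\{q, y_1, y_2\}$ has three distinct elements, hence $y_1 \ne y_2$ are two distinct atoms of $Y$. They generate a rank-$2$ sub-flat of $Y$, so $q \in \cl(\{y_1, y_2\}) \subseteq Y$, contradicting $q \in F_2 \setminus Y$ (for $r(Y) \le 1$, $Y$ contains at most one atom in a simple matroid, so $y_1 = y_2$, also contradictory). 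Swapping $F_1$ and $F_2$ if necessary, I may henceforth assume $f(a,b)\in F_1$ for all $a,b\in E\setminus X$.

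Now fix any $e \in E\setminus X$ (available since $X$ is proper) and set $F_1':=F_1\vee e$; I will verify $M=P_Y(F_1', F_2)$. The atoms of $F_1'$ are exactly $F_1 \cup (E\setminus X)$: each $b \in E\setminus X$ lies in $F_1'$ via the circuit $\{e,b,f(e,b)\}$ with $f(e,b)\in F_1$, while for $q\in F_2\setminus Y$ the closure $\cl(\{e,q\})$ reduces to $\{e,q\}$ (no third atom $r$ can complete a $3$-circuit: $r\in X$ would force $e\in \cl(\{q,r\})\subseteq X$, and $r\in E\setminus X$ would force $q=f(e,r)\in F_1$), so modularity of $F_1$ gives $r(F_1\vee e\vee q)=r(F_1)+2>r(F_1')$, whence $q\notin F_1'$. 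Thus $F_1'\cup F_2=E$ and $F_1'\cap F_2=Y$. Modularity of $F_1'$ in $M$ is verified by the rank equation: for any flat $Z$, writing $A:=F_1\wedge Z$ and $U:=(E\setminus X)\cap Z$, the identity $f(u,u')\in F_1\cap Z=A$ (because $f(u,u')\in \cl(\{u,u'\})\subseteq Z$) makes $U$ add rank $1$ over $A$ when $U \ne \emptyset$ and $0$ otherwise, so $r(F_1'\wedge Z)$ equals $r(A)+1$ or $r(A)$; dually $e\in F_1\vee Z$ iff $U\ne\emptyset$, giving $r(F_1'\vee Z)=r(F_1\vee Z)$ or $r(F_1\vee Z)+1$. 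Summing the two and invoking modularity of $F_1$ yields $r(F_1')+r(Z)=r(F_1'\wedge Z)+r(F_1'\vee Z)$, so $F_1'$ is modular. Then $M=P_Y(F_1',F_2)$ by definition, and since $F_1\subsetneq F_1'$ has corank $1$ with $F_1$ modular in $M$, $F_1$ is a modular coatom of $M|F_1'$.

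The main obstacle is the case analysis in the second paragraph that excludes the simultaneous occurrence of $F_1$- and $F_2$-exclusive pairs; once that is established, the construction of $F_1'$ and the verification of its properties are straightforward applications of the modularity of $F_1$ and the Jambu--Papadima triangle identity.
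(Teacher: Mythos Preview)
Your proof is correct and follows the same overall architecture as the paper's: use the Jambu--Papadima map $f$ to force (after swapping) $f(a,b)\in F_1$ for all $a,b\in E\setminus X$, set $F_1'$ equal to the flat with atom set $F_1\cup(E\setminus X)$, and verify that $M=P_Y(F_1',F_2)$. Your treatment of the first step is more explicit than the paper's---the paper only argues that no single atom can be incident to both an $F_1$-exclusive and an $F_2$-exclusive pair and then asserts ``without loss of generality $f(a,b)\in F_1$ for all $a,b$'', leaving the passage from a local to a global statement implicit; your triangle argument with $w,v_1,v_2$ fills that in cleanly.

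The genuine divergence is in how modularity of $F_1'$ is established. The paper invokes the modular short-circuit axiom (Theorem~\ref{Brylawsky modular short-circuit axiom}) and runs an induction on $|C\cap(E\setminus X)|$, repeatedly applying strong circuit elimination against the triangles $\{a,b,f(a,b)\}$ to push a circuit into $X$, where the known modularity of $F_1$ finishes the job. You instead verify the rank identity directly: using that $F_1$ is already modular in $M$ (via Proposition~\ref{Brylawski modular tower}), you compute $r(F_1'\wedge Z)$ and $r(F_1'\vee Z)$ by a case split on whether $Z$ meets $E\setminus X$, and reduce to the modular equation for $F_1$. This is shorter and avoids circuit manipulations entirely; the only cost is that you must first pin down the atom set of $F_1'$ precisely (which you do). Either route works, but yours makes the dependence on the modularity of $F_1$ in $M$ more transparent, while the paper's argument stays closer to the circuit-level definition of modularity.
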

\begin{proof}
Recall that, for $ e,e^{\prime} \in E\setminus X $, $ f(e,e^{\prime}) $ denotes a unique element in $ X $ such that $ e,e^{\prime},f(e,e^{\prime}) $ form a circuit (see Proposition~\ref{Jambu-Papadima}).
Assume that there exist three distinct atoms $ a,b,c \in E\setminus X $ such that $ f(a,b) \in F_{1}\setminus F_{2} $ and $ f(a,c) \in F_{2}\setminus F_{1} $.
By Proposition \ref{Jambu-Papadima}, the atoms $ f(a,b),f(a,c),f(b,c) $ form a circuit.
Therefore if $ f(b,c) \in F_{1} $, then $ f(a,c) \in F_{1} $, and if $ f(b,c) \in F_{2} $, then $ f(a,b) \in F_{2} $.
The both cases contradict the assumption.
Hence without loss of generality, we may assume that $ f(a,b) \in F_{1} $ for any two distinct two atoms $ a,b \in E\setminus X $.
Let $ F_{1}^{\prime} \coloneqq (E\setminus X) \cup F_{1} $.

Now we will show that $ M = P_{Y}(F_{1}^{\prime}, F_{2}) $.
Clearly, $ F_{1}^{\prime} \cap F_{2} = F_{1} \cap F_{2} = Y $.
Hence it is satisfied to show that the subset $ F_{1}^{\prime} $ is a modular flat of $ M $.
We will prove it by using Theorem \ref{Brylawsky modular short-circuit axiom}.
Let $ C $ be a circuit of $ M $ and take an atom $ e \in C \setminus F_{1}^{\prime} $.
We will construct a desired circuit by induction on $ m \coloneqq |C \cap (E\setminus X)| $.

First, consider the case $ m=0 $.
By modularity of $ F_{1} $, we have an atom $ x \in F_{1} \subseteq F_{1}^{\prime} $ and a~circuit $ C^{\prime} $ such that $ e \in C^{\prime} \subseteq \{x\} \cup (C\setminus F_{1}) = \{x\} \cup (C\setminus F_{1}^{\prime}) $, which is a desired circuit.

Second, suppose that $ m=1 $.
Let $ C \cap (E\setminus X) = \{a\} $.
Since $ C\setminus \{a\} \subseteq X $ and $ C $ is a circuit, we have $ a \in \cl_{M}(X) = X $, which is a contradiction.
Hence the case $ m=1 $ does not occur.

Finally, assume that $ m \geq 2 $.
Let $ a,b \in C \cap (E\setminus X) $ be distinct atoms.
By Proposition \ref{Jambu-Papadima}, $ T \coloneqq \{a,b,f(a,b)\} $ is a circuit.
Using the strong circuit elimination axiom (see \cite[Proposition~1.4.12]{oxley2011matroid} for example), we obtain a circuit $ C_{1} $ such that $ e \in C_{1} \subseteq (C \cup T)\setminus \{a\} = (C\setminus \{a\}) \cup \{f(a,b)\} $.
Note that $ C_{1}\setminus F_{1}^{\prime} \subseteq C\setminus F_{1}^{\prime} $ since $ f(a,b) \in F_{1} $.
Furthermore, the circuit $ C_{1} $ satisfies $ |C_{1}\cap (E\setminus X)| \leq m-1 $.
Therefore, by the induction hypothesis, we have an atom $ x \in F_{1}^{\prime} $ and a circuit $ C^{\prime} $ such that $ e \in C^{\prime} \subseteq \{x\} \cup (C_{1}\setminus F_{1}^{\prime}) \subseteq \{x\} \cup (C \setminus F_{1}^{\prime}) $, which is a desired circuit.
Thus $ F_{1}^{\prime} $ is a modular flat and hence $ M=P_{Y}(F_{1}^{\prime}, F_{2}) $.
Moreover $ F_{1} $ is a modular flat of $ M|F_{1}^{\prime} $ by Theorem~\ref{Brylawski modular coatom}.
\end{proof}

\begin{proof}[Proof of Theorem~\ref{main ss}]
It suffices to show that every non-supersolvable matroid $ M \in \mathcal{ME} $ is a modular join $ M=P_{Y}(M^{\prime}_{1}, M^{\prime}_{2}) $ over a round flat $ Y $ such that $ M^{\prime}_{1},M^{\prime}_{2} \in \mathcal{ME} $.
We proceed by induction on the rank $ r(M) $.
If $ r(M) \leq 2 $, then $ M $ is supersolvable and we have nothing to prove.
Assume that $ r(M) \geq 3 $.

\looseness=1 We may assume that $ M $ has a modular coatom $ X $ such that $ M|X \in \mathcal{ME} $.
If $ M|X $ is super\-solvable, then so is $ M $, which is a contradiction.
Therefore $ M|X $ is not supersolvable and, by induction, there are simple matroids $ M_{1}, M_{2} \in \mathcal{ME} $ and a round flat $ Y $ such that $ M|X = P_{Y}(M_{1}, M_{2}) $.
By Lemma \ref{lemma ss}, $ M $ is also a modular join $ M = P_{Y}(M^{\prime}_{1}, M^{\prime}_{2}) $ with $ M^{\prime}_{1}, M^{\prime}_{2} \in \mathcal{ME} $.
\end{proof}

\section{Applications}\label{sec:applications}

\subsection{Arrangements associated with gain graphs}
Gain graphs yield two important classes of arrangements.
One includes the Weyl arrangements of type $ A$, $B $, and $ D $ and the other includes the Catalan, Shi, and Linial arrangements.
In this subsection, we study modularly extended matroids associated with gain graphs.

\subsubsection{Basic notions}
A \textit{gain graph} is a tuple $ \Gamma = (V_{\Gamma}, E_{\Gamma}, L_{\Gamma}, G_{\Gamma}) $, where
\begin{itemize}\itemsep=0pt
\item $ V_{\Gamma} $ is a finite set,
\item $ L_{\Gamma} $ is a subset of $ V_{\Gamma} $,
\item $ G_{\Gamma} $ is a group,
\item $ E_{\Gamma} $ is a finite subset of $ \{ (u,v,g) \in V_{\Gamma} \times V_{\Gamma} \times G_{\Gamma} \,|\, u \neq v \} $ divided by the equivalence relation~$ \sim $ generated by $ (u,v, g) \sim \big(v,u, g^{-1}\big) $.
\end{itemize}
Let $ \{u,v\}_{g} $ denote the equivalence class containing $ (u,v,g) $ and hence $ \{u,v\}_{g} = \{v,u\}_{g^{-1}} $.
Elements of $ V_{\Gamma}$, $E_{\Gamma}$, and $ L_{\Gamma} $ are called \textit{vertices}, \textit{edges}, and \textit{loops} of the gain graph $ \Gamma $ respectively and~$ G_{\Gamma} $ is called the \textit{gain group} of~$ \Gamma $.
We quite simplify the notion of gain graphs. See Zaslavsky~\cite{zaslavsky1989biased-joctsb} for a general treatment.
Note that every simple graph can be regarded as a loopless gain graph over the trivial group.

A \textit{cycle} of a gain graph $ \Gamma $ a loop or a subset of $ E_{\Gamma} $ consisting of edges
\begin{gather*}
\{v_{1}, v_{2} \}_{g_{1}}, \{v_{2}, v_{3} \}_{g_{2}}, \dots, \{v_{m-1}, v_{m} \}_{g_{m-1}}, \{v_{m}, v_{1} \}_{g_{m}}
\end{gather*}
with distinct vertices $ v_{1}, \dots, v_{m}$ $(m \geq 2) $, where $ \{v_{1}, v_{2} \}_{g_{1}} \neq \{v_{2}, v_{1} \}_{g_{2}} $ if $ m=2 $. The cycle above is said to be \textit{balanced} if $ g_{1}g_{2} \cdots g_{m} = 1 $.
Note that whether or not the value equals the identity is independent of indexing the vertices of the cycle and hence being balanced is well-defined. Every loop is defined to be unbalanced.

A subset $ S \subseteq E_{\Gamma} \sqcup L_{\Gamma} $ is called \textit{balanced} if every cycle in $ S $ is balanced (and hence $ S $ has no loops). The set $ S $ is said to be \textit{contrabalanced} if $ S $ has no balanced cycles.
Moreover, $ S $ is called \textit{balance-closed} if
\begin{align*}
\{ e \in E_{\Gamma} \setminus S \,|\, \text{ there exists a balanced cycle } C \text{ such that } e \in C \subseteq S \cup \{e\}\} = \varnothing.
\end{align*}

A \textit{path} on distinct vertices $ v_{1}, \dots, v_{m}$ $(m \geq 1) $ is a subset of $ E_{\Gamma} $ consisting of edges
\begin{gather*}
\{v_{1}, v_{2} \}_{g_{1}}, \{v_{2}, v_{3} \}_{g_{2}}, \dots, \{v_{m-1}, v_{m} \}_{g_{m-1}}.
\end{gather*}
A \textit{tight handcuff} is the union of two cycles $ C_{1} $ and $ C_{2} $ such that $ C_{1} $ and $ C_{2} $ have just one common vertex.
A \textit{loose handcuff} is the union of two cycles $ C_{1} $ and $ C_{2} $, and a path $ P $ from $ v_{1} $ to $ v_{2} $ of positive length such that $ P $ and $ C_{i} $ meet only at $ v_{i} $ and the cycles $ C_{1} $ and $ C_{2} $ does not share vertices.
A \textit{handcuff} is a tight or loose handcuff.
A \textit{theta} is the union of three paths meeting only at their endvertices.

Suppose that $ G $ is a finite group.
Let $ K_{n}^{G} $ denote the loopless gain graph on the vertex set $ [n] = \{1, \dots, n\} $ with gain group $ G $ and edges
\begin{align*}
\{ \{i,j\}_{g} \,|\, i,j \in [n] \text{ with } i \neq j \text{ and } g \in G\}
\end{align*}
and let $ \mathring{K}_{n}^{G} $ denote the gain graph $ K_{n}^{G} $ together with all possible loops.
Note that both of $ K_{0}^{G} $ and $ \mathring{K}_{0}^{G} $ mean the null graph.

A gain graph is \textit{connected} if there exists a path between every pair of vertices of the graph.
If a gain graph is disconnected, then it is decomposed into the connected components in a usual manner.
A connected component of a subset $ S \subseteq E_{\Gamma} \sqcup L_{\Gamma} $ is a connected component of the gain graph $ (V_{\Gamma}, S\cap E_{\Gamma}, S\cap L_{\Gamma}, G_{\Gamma}) $.

Let $ W \subseteq V_{\Gamma} $.
A \textit{subgraph induced by $ W $} is a gain graph $ \Gamma[W] = (W, E_{\Gamma[W]}, W \cap L_{\Gamma}, G_{\Gamma}) $, where $ E_{\Gamma[W]} \coloneqq \{ \{u,v\}_{g} \in E_{\Gamma} \,|\, u,v \in W \} $.
An \textit{induced subgraph} of $ \Gamma $ is a subgraph induced by some subset of $ V_{\Gamma} $.
Moreover, $ \Gamma \setminus v \coloneqq \Gamma[V_{\Gamma}\setminus\{v\}] $.

\subsubsection{Frame matroids and the associated arrangements}
\begin{Theorem}[{Zaslavsky \cite[Theorem 2.1]{zaslavsky1991biased-joctsb}}]\label{Zaslavsky frame matroid}
Let $ \Gamma $ be a gain graph.
Then the following conditions define the same matroid on $ E_{\Gamma} \sqcup L_{\Gamma} $.
\begin{enumerate}[$(a)$]\itemsep=0pt
\item\label{Zaslavsky frame matroid a} A subset of $ E_{\Gamma} \sqcup L_{\Gamma} $ is independent if and only if every connected component of it has no balanced cycles and at most one unbalanced cycle.
\item\label{Zaslavsky frame matroid b} A subset of $ E_{\Gamma} \sqcup L_{\Gamma} $ is a circuit if and only if it is a balanced cycle, a contrabalanced handcuff, or a contrabalanced theta.
\end{enumerate}
We call the matroid the \textit{frame matroid} of $ \Gamma $, denoted by $ M_{\times}(\Gamma) $.
\end{Theorem}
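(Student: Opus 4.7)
The plan is to verify that the collection of subsets described in (a) forms the independence system of a matroid, and then to identify its circuits as the sets in~(b). I would introduce the candidate rank function $r(S) := |V_{\Gamma}| - b(S)$, where $b(S)$ counts the balanced connected components of the subgraph $(V_{\Gamma}, S \cap E_{\Gamma}, S \cap L_{\Gamma}, G_{\Gamma})$, treating each isolated vertex as a balanced component. An elementary inspection of pseudoforests shows that $|S| = r(S)$ holds precisely when each connected component of the subgraph on $S$ is either a tree, or a tree together with a single unbalanced cycle (a loop counting as such a cycle). This is exactly condition~(a).

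First I would verify that $r$ is a matroid rank function; only submodularity is non-trivial. The natural approach is to analyse how $b$ changes when a single edge or loop $e$ is added to a set $A$. A short case analysis shows $b(A) - b(A \cup \{e\}) \in \{0, 1\}$: the element $e$ either merges two previously distinct components, destroys the balance of one component by creating an unbalanced cycle, or changes nothing. The crucial monotonicity is that if $e$ destroys the balance of a component when added to $A$, it does the same for every $B \supseteq A$, because an unbalanced cycle appearing in $A \cup \{e\}$ persists in $B \cup \{e\}$. Combining this with the bookkeeping of component merges yields $r(A \cup \{e\}) - r(A) \geq r(B \cup \{e\}) - r(B)$ for $A \subseteq B$, which is equivalent to submodularity.

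With (a) established as a matroid, I would identify its circuits. A circuit $C$ must lie in a single connected component of the subgraph it induces, since otherwise splitting off a proper connected independent piece would contradict minimality. If $C$ contains a balanced cycle $C_{0}$, then $C_{0}$ is itself dependent, so $C = C_{0}$. Otherwise every cycle in $C$ is unbalanced, and the fact that $C$ is dependent while supported on a single component forces its cyclomatic number to be at least two; minimality then pins this number down to exactly two. The standard classification of connected graphs of cyclomatic number two identifies $C$ as a theta, a tight handcuff, or a loose handcuff, and the "no balanced sub-cycle" requirement is exactly contrabalance. Conversely, each contrabalanced theta or handcuff is indeed a circuit, since removing any single edge leaves a pseudoforest with at most one unbalanced cycle per component, hence an independent set.

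The main obstacle is the submodularity verification, since the interaction between merging components and destroying balance requires careful bookkeeping across the cases where $e$ is a loop, where $e$ lies within a component, and where $e$ bridges two components, together with the monotonicity of unbalance under supergraphs. All remaining steps reduce to elementary graph-theoretic observations about pseudoforests and about connected graphs of cyclomatic number two.
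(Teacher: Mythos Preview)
The paper does not supply a proof of this theorem: it is quoted verbatim from Zaslavsky~\cite{zaslavsky1991biased-joctsb} and used as a black box throughout Section~4. There is therefore no ``paper's own proof'' to compare against; any argument you give is necessarily a reconstruction of the cited result rather than of something in the present paper.

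That said, your outline is a faithful sketch of the standard proof. The rank function $r(S)=|V_{\Gamma}|-b(S)$ is exactly the one Zaslavsky uses for the bias (frame) matroid, and the identification of the independent sets with contrabalanced pseudoforests (each component a tree or a unicyclic graph whose unique cycle is unbalanced) is correct. The circuit analysis is also sound: once you observe that a circuit is connected, has no pendant edges, and either contains a balanced cycle (hence equals it) or has cyclomatic number exactly two, the classification into thetas and handcuffs is routine, and contrabalance is forced.

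One place where your sketch is slightly optimistic is the submodularity step in the case where $e$ joins two vertices already lying in the same balanced component of $A$, and the resulting component $A\cup\{e\}$ remains balanced. To conclude $\Delta_B=0$ for $B\supseteq A$ you need that every cycle through $e$ in $B\cup\{e\}$ is balanced whenever the $B$-component was balanced. For general biased graphs this is exactly the theta axiom; for gain graphs it follows cleanly from the existence of a potential function (a map $\phi\colon V\to G$ with $g=\phi(u)\phi(v)^{-1}$ on each edge of a balanced component), which trivialises the balance check for the new cycles. You gesture at the right monotonicity but do not name this ingredient; without it the case analysis does not close.
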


The frame matroid $ M_{\times}\big(\mathring{K}_{n}^{G}\big) $ is known as the Dow\-ling geometry~$ Q_{n}(G) $ introduced by Dow\-ling~\cite{dowling1973class-joctsb}.
When $ n \geq 3 $, the Dowling geometry $ Q_{n}(G) $ is representable over a field $ \mathbb{K} $ if and only if~$ G $ is isomorphic to a subgroup of $ \mathbb{K}^{\times} $ \cite[Theorem~9]{dowling1973class-joctsb}.

When $ \Gamma $ is a gain graph on $ [n] $ whose gain group $ G $ is a subgroup of the multiplicative group~$ \mathbb{K}^{\times} $ of a field $ \mathbb{K} $, we may associate~$ \Gamma $ with an arrangement $ \mathcal{A}_{\times}(\Gamma) $ in $ \mathbb{K}^{n} $ defined by the following
\begin{gather*}
\mathcal{A}_{\times}(\Gamma) \coloneqq \{\{x_{i}-gx_{j} = 0\} \,|\, \{i,j\}_{g} \in E_{\Gamma}\} \cup \{\{x_{i}=0\} \,|\, i \in L_{\Gamma}\}.
\end{gather*}

\begin{Example}
When $ G = \{1\} \subseteq \mathbb{K}^{\times} $, the arrangement $ \mathcal{A}_{\times}(\Gamma) $ is the graphic arrangement.
Especially, $ \mathcal{A}_{\times}\big(K_{n}^{\{1\}}\big) $ is the braid arrangement, also known as the Weyl arrangement of type~$ A_{n-1} $.
If $ G = \{\pm 1\} \subseteq \mathbb{R}^{\times} $, then the arrangements $ \mathcal{A}_{\times}\big(\mathring{K}_{n}^{\{\pm 1\}}\big) $ and $ \mathcal{A}_{\times}\big(K_{n}^{\{\pm 1\}}\big) $ are known as the Weyl arrangements of type $ B_{n} $ and $ D_{n} $.
More specifically,
\begin{gather*}
\mathcal{A}_{\times}\big(K_{n}^{\{1\}}\big) = \{\{x_{i}-x_{j}=0\} \,|\, 1 \leq i < j \leq n\}, \\
\mathcal{A}_{\times}\big(\mathring{K}_{n}^{\{\pm 1\}}\big) = \{\{x_{i} \pm x_{j}=0\} \,|\, 1 \leq i < j \leq n\} \cup \{\{x_{i}=0\} \,|\, 1 \leq i \leq n\}, \\
\mathcal{A}_{\times}\big(K_{n}^{\{\pm 1\}}\big) = \{\{x_{i} \pm x_{j}=0\} \,|\, 1 \leq i < j \leq n\}.
\end{gather*}
\end{Example}

\begin{Theorem}[{Zaslavsky \cite[Theorem~2.1(a)]{zaslavsky2003biased-joctsb}}]\label{Zaslavsky frame arrangement}
The linear dependence matroid on $ \mathcal{A}_{\times}(\Gamma) $ is isomorphic to the frame matroid $ M_{\times}(\Gamma) $.
\end{Theorem}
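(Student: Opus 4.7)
The plan is to set up the natural bijection $\phi \colon E_{\Gamma} \sqcup L_{\Gamma} \to \mathcal{A}_{\times}(\Gamma)$ sending $\{i,j\}_{g} \mapsto \{x_{i} - gx_{j} = 0\}$ and a loop at $i$ to $\{x_{i} = 0\}$, and to verify that the linear dependence matroid on $\mathcal{A}_{\times}(\Gamma)$ coincides with $M_{\times}(\Gamma)$ under this identification. Since $M(\mathcal{A}_{\times}(\Gamma))$ is the column matroid of the matrix whose columns are the normal vectors $\varepsilon_{i} - g\varepsilon_{j}$ (for edges) and $\varepsilon_{i}$ (for loops), where $\varepsilon_{1}, \dots, \varepsilon_{n}$ is the standard basis of $\mathbb{K}^{n}$, it suffices to show that a subset $S$ of $E_{\Gamma} \sqcup L_{\Gamma}$ is independent in $M_{\times}(\Gamma)$ if and only if the corresponding normals are linearly independent. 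I would adopt the combinatorial characterization of independence from Theorem~\ref{Zaslavsky frame matroid}(\ref{Zaslavsky frame matroid a}).

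First I would treat forests: the normals of the edges of any forest are linearly independent, which one proves by induction on the number of edges using leaf removal, since deleting a leaf $v$ and its incident edge corresponds to striking a column that has a single nonzero entry in row $v$. Next, for a connected balanced subgraph with vertex set $W$ and a spanning tree $T$, there is a unique-up-to-scalar linear functional on $\mathbb{K}^{W}$ annihilating the tree normals: fixing a root $v_{0}$, it is $\sum_{v \in W} p(v) x_{v}$, where $p(v)$ is the product of the edge gains along the unique $T$-path from $v_{0}$ to $v$. Because every $p(v)$ is a unit of $\mathbb{K}^{\times}$, this functional evaluates nontrivially on every $\varepsilon_{v}$ and on every $\varepsilon_{i} - g\varepsilon_{j}$ whose closing cycle with $T$ is unbalanced, whereas it vanishes on edges closing balanced cycles. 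Hence adding a loop or an unbalanced-cycle-closing edge raises the rank by one, while adding a balanced-cycle-closing edge does not.

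Combining these observations, the normals corresponding to $S$ are linearly independent precisely when each connected component of $S$ consists of a tree possibly augmented by one further element whose supporting cycle (counting a loop as such) is unbalanced, matching Theorem~\ref{Zaslavsky frame matroid}(\ref{Zaslavsky frame matroid a}). One can alternatively write down explicit dependences realizing each circuit type of Theorem~\ref{Zaslavsky frame matroid}(\ref{Zaslavsky frame matroid b}): for a balanced cycle $v_{1}v_{2}\cdots v_{m}v_{1}$ with gains $g_{1}, \dots, g_{m}$ satisfying $g_{1}\cdots g_{m}=1$, the coefficients $c_{k} = (g_{1}\cdots g_{k-1})^{-1}$ make $\sum_{k} c_{k}(\varepsilon_{v_{k}} - g_{k}\varepsilon_{v_{k+1}})$ telescope to zero, while for a contrabalanced handcuff or theta each constituent unbalanced cycle spans the full coordinate subspace indexed by its vertex set, so one can express a shared basis vector in two distinct ways and subtract.

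The main obstacle is verifying minimality of these dependences in the handcuff and theta cases, i.e., checking that no proper subset of the claimed circuit is already dependent. This reduces, via the analysis above, to showing that every proper subset has each connected component supporting at most one unbalanced cycle or loop and no balanced cycle, which is immediate from inspecting the combinatorial structure of handcuffs and thetas (removing any single element from either leaves each component with at most one remaining unbalanced cycle and no balanced cycle).
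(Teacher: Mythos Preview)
The paper does not prove this statement at all; it is quoted as Zaslavsky's result with a bare citation, so there is nothing in the paper to compare your argument against. Your sketch is a correct direct verification and is essentially the standard one: set up the bijection with normal vectors $\varepsilon_{i}-g\varepsilon_{j}$ and $\varepsilon_{i}$, strip leaves to handle forests, and on a balanced connected piece with vertex set $W$ use the potential (switching) functional $\sum_{v\in W}p(v)x_{v}$ to see that the tree normals span a hyperplane in $\mathbb{K}^{W}$, so that one further unbalanced element fills the remaining coordinate while a balanced one does not. The one step you leave implicit in the phrase ``precisely when'' is the pigeonhole observation that a connected component on vertex set $W$ with at least $|W|+1$ elements has its normals confined to $\mathbb{K}^{W}$ and is therefore dependent; making that explicit completes the equivalence with Theorem~\ref{Zaslavsky frame matroid}(\ref{Zaslavsky frame matroid a}). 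Your alternative circuit-by-circuit check via Theorem~\ref{Zaslavsky frame matroid}(\ref{Zaslavsky frame matroid b}) is also fine, and the minimality argument you give for handcuffs and thetas is adequate.
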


Note that if $ \Gamma $ is a simple graph, then $ M_{\times}(\Gamma) $ and $ \mathcal{A}_{\times}(\Gamma) $ coincide with the graphic matroid and arrangement. Moreover, recall that the graphic matroids associated with complete graphs are round.
Here we have a generalization for frame matroids.

\begin{Proposition}\label{frame round}
Suppose that $ G $ is a finite group.
Then the frame matroids $ M_{\times}\big(K_{n}^{G}\big) $ and $ M_{\times}\big(\mathring{K}_{n}^{G}\big) $ are round except for $ M_{\times}\big(K_{2}^{\{\pm 1\}}\big) $.
\end{Proposition}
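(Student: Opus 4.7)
The plan is to show that the ground set is never the union of two coatoms (equivalent to roundness). First I would establish, as a consequence of Theorem~\ref{Zaslavsky frame matroid}, that a flat $F$ of $M_{\times}(\Gamma)$ corresponds to a partial partition of $V_{\Gamma}$ with each block labelled either \emph{balanced} (carrying a gain assignment $\phi$, so that $F$ contains exactly the edges $\{u,v\}_{\phi(u)\phi(v)^{-1}}$ within the block) or \emph{unbalanced} (so that $F$ contains every edge within the block, and, when $\Gamma = \mathring{K}_{n}^{G}$, every loop at its vertices). A single flat has at most one unbalanced block, since two disjoint unbalanced cycles joined by any edge form a contrabalanced handcuff, hence a circuit.

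For $M_{\times}(\mathring{K}_{n}^{G})$ with $n \geq 2$ I would use a short loop-basis argument. The $n$ loops form a basis, so any proper flat misses some loop. Assuming $F_{1} \cup F_{2} = E$ with both $F_{i}$ proper, I would pick vertices $a, b$ with $l_{a} \notin F_{1}$ and $l_{b} \notin F_{2}$; then $a \neq b$, and by the flat description $a$ lies outside the unbalanced block of $F_{1}$ while $b$ lies inside it, and symmetrically for $F_{2}$. Hence $a$ and $b$ sit in different components of each $F_{i}$, so no edge $\{a,b\}_{g}$ belongs to either flat, contradicting $F_{1} \cup F_{2} = E$.

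For $M_{\times}(K_{n}^{G})$ with $|G| \geq 2$ and $n \geq 3$ I would classify coatoms by the rank formula as one of three types: (i) a single balanced block of size $n$; (ii) an unbalanced block of size $n-1$ together with a lone vertex; (iii) an unbalanced block of size $u \geq 2$ together with a balanced block of size $n-u \geq 2$. If some $F_{i}$ is of type (ii) or (iii), pick a pair $(u, v)$ in distinct components of $F_{i}$: all $|G|$ edges $\{u,v\}_{g}$ must lie in $F_{3-i}$, and since two parallel edges already form an unbalanced digon (and a third closes a contrabalanced theta), $F_{3-i}$ places $u, v$ into its unbalanced block; propagating this to all zero pairs forces $V^{U}_{3-i} = V$, hence $F_{3-i} = E$. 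Thus both coatoms must be of type (i); but then $F_{1} \cup F_{2}$ contains at most two edges per pair, which fails when $|G| \geq 3$, while for $|G| = 2$ the covering condition makes $\phi_{1}\phi_{2}^{-1}\colon V \to G$ injective, impossible for $n \geq 3$.

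The remaining cases are checked directly. For $|G| = 1$ and arbitrary $n \geq 2$, $M_{\times}(K_{n}^{\{1\}})$ is the graphic matroid of the complete graph $K_{n}$, already known to be round (as cited before Definition~\ref{modularly extended matroid}). The cases $n \leq 1$ are trivial, and for $n = 2$ a quick count of the singleton coatoms (there are $|G|$ of them for $K_{2}^{G}$, and $|G|+2$ for $\mathring{K}_{2}^{G}$) confirms roundness except when $|G| = 2$ in $K_{2}^{G}$, yielding precisely the stated exception $M_{\times}(K_{2}^{\{\pm 1\}})$. The main obstacle I anticipate is the initial setup of the flat classification and the ``two parallel edges force an unbalanced block'' closure step; after that the coatom case analysis is routine bookkeeping.
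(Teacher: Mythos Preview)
Your proof is correct and takes a genuinely different route from the paper. You set up an explicit description of the flats of the frame matroid (partial partitions into balanced and unbalanced blocks, with at most one unbalanced block in the complete case) and then run a coatom case analysis; for $\mathring{K}_n^G$ your loop-basis trick is particularly clean. The paper, by contrast, never writes down the flat classification. For $K_n^G$ it argues directly with circuits: it puts an equivalence relation on vertices (``connected by an edge in $F_1$''), uses balanced triangles to get transitivity, and then contrabalanced thetas/handcuffs to force one $F_i$ to equal $E$; the $|G|=2$, $n\geq 3$ endgame is handled by a parity argument on triangles rather than your injectivity-of-$\phi_1\phi_2^{-1}$ observation. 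For $\mathring{K}_n^G$ the paper avoids any flat analysis at all: it notes that the subset $S$ corresponding to $\mathring{K}_n^{\{1\}}$ satisfies $M|S \simeq M(K_{n+1})$ (already known round) and $\cl_M(S)=E$, then invokes the lemma of Kung/Probert that the closure of a round set is round. Your approach is more systematic and the flat description is a reusable tool once in hand; the paper's $\mathring{K}_n^G$ argument is essentially a one-liner but depends on the auxiliary round-closure lemma. Both proofs dispatch the exceptional $K_2^{\{\pm 1\}}$ and the degenerate small cases by inspection.
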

\begin{proof}
First we prove that $ M_{\times}\big(K_{n}^{G}\big) $ is round except for $ M_{\times}\big(K_{2}^{\{\pm 1\}}\big) $.
Let $ E $ be the ground set of $ M_{\times}\big(K_{n}^{G}\big) $ and suppose that $ E = F_{1} \cup F_{2} $.

Now consider the case $ n = 2 $.
If $ |G| = 1 $, then $ M_{\times}\big(K_{2}^{G}\big) $ is trivially round.
Assume that $ |G|\geq 3 $.
Then we may suppose that $ |F_{1}| \geq 2 $.
Since every contrabalanced handcuff is a circuit by Theorem~\ref{Zaslavsky frame matroid}(\ref{Zaslavsky frame matroid b}), we have $ F_{1} = E $ and hence $ M_{\times}\big(K_{2}^{G}\big) $ is round.

Suppose that $ n \geq 3 $.
Define a relation $ \sim $ on the vertex set $ K_{n}^{G} $ by $ u \sim v $ if $ u=v $ or there exists an edge in $ F_{1} $ connecting $ u $ and $ v $.
Since every balanced triangle is a circuit by Theorem~\ref{Zaslavsky frame matroid}(\ref{Zaslavsky frame matroid b}), the relation $ \sim $ is an equivalence relation.
Assume that there exist two or more distinct equivalence classes.
Consider the equivalence relation~$ \approx $ among vertices defined by $ u \approx v $ if $ u=v $ or there exists an edge in~$ F_{2} $ between~$ u $ and~$ v $.
If $ u \not\sim v $, then $ u \approx v $ since $ E=F_{1} \cup F_{2} $.
When $ u \sim v $, take a vertex $ w $ such that $ u \not\sim w $ and $ v \not\sim w $.
Then $ u \approx w $ and $ v \approx w $.
Therefore $ u \approx v $.
Namely, all vertices are equivalent under the relation~$ \approx $.
Hence without loss of generality we may assume that every two distinct vertices are connected by an edge in~$ F_{1} $.

If $ |G| = 1 $, then $ F_{1} = E $ and hence $ M_{\times}\big(K_{2}^{G}\big) $ is round.
Therefore we assume $ |G| \geq 2 $.
Suppose there exists a pair of vertices such that there exist two different edges in $ F_{1} $ connecting them.
Since a contrabalanced theta and a contrabalanced tight handcuff are circuits by Theorem~\ref{Zaslavsky frame matroid}(\ref{Zaslavsky frame matroid b}), we have $ F_{1} = E $ and $ M_{\times}\big(K_{2}^{G}\big) $ is round.
Hence we may assume that there exist no such pairs, that is, there exists exactly one edge in~$ F_{1} $ between each pair of vertices.

If $ |G| \geq 3 $, then $ F_{2} $ has at least two edges between every pair of vertices, which implies $ F_{2} = E $.
Therefore $ M_{\times}\big(K_{2}^{G}\big) $ is round.
Hence we may assume that $ |G|=2 $.
Focus on a triangle in~$ F_{1} $.
If the triangle is unbalanced, then there exists another edge in $ F_{1} $ such that it forms a balanced triangle with two edges in the triangle since every balanced triangle is a circuit.
Therefore~$ F_{1} $ has a pair of vertices such that there exist at least two edges between them, which contradicts to the assumption of~$ F_{1} $.
Therefore the triangle is balanced and hence~$ F_{2} $ has an unbalanced triangle.
By the same argument, $ F_{2} $ has a pair of vertices such that there exist at least two edges between them.
This implies that $ F_{2}=E $.
Thus $ M_{\times}\big(K_{2}^{G}\big) $ is round.

Next, we prove that $ M_{\times}\big(\mathring{K}_{n}^{G}\big) $ is round.
Let $ S $ be the subset of the ground set $ E $ of $ M \coloneqq M_{\times}\big(\mathring{K}_{n}^{G}\big) $ corresponding to the subgraph $ \mathring{K}_{n}^{\{1\}} $.
Then $ M|S = M_{\times}\big(\mathring{K}_{n}^{\{1\}}\big) \simeq M(K_{n+1}) $, which is round.
Every edge $ \{i,j\}_{g} $ of $ \mathring{K}_{n}^{G} $ forms a contrabalanced handcuff with the loops attached to the endvertices $ i $ and~$ j $.
Since a contrabalanced handcuff is a circuit by Theorem~\ref{Zaslavsky frame matroid}(\ref{Zaslavsky frame matroid b}), we have $ \cl_{M}(S) = E $. The assertion holds by the following proposition.
\end{proof}

\begin{Proposition}[{Kung \cite[Lemma 4.1]{kung1986numerically-gd}}, {Probert \cite[Lemma 4.2.7]{probert2018chordality}}]
Let $ S $ be a subset of the ground set of a matroid $ M $.
If $ S $ is round, then $ \cl_{M}(S) $ is round.
\end{Proposition}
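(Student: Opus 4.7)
The plan is to prove the contrapositive: assume $ \cl_{M}(S) $ is not round and deduce that $ M|S $ is not round, contradicting the hypothesis that $ S $ is round.

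First I would unpack the hypothesis. If $ \cl_{M}(S) $ is not round, there exist proper flats $ F_{1}, F_{2} $ of $ M|\cl_{M}(S) $ with $ \cl_{M}(S) = F_{1} \cup F_{2} $. Since $ \cl_{M}(S) $ is itself a flat of $ M $, Proposition~\ref{Oxley interval}(\ref{Oxley interval 2}) identifies flats of $ M|\cl_{M}(S) $ with flats of $ M $ contained in $ \cl_{M}(S) $, so both $ F_{1} $ and $ F_{2} $ may be regarded as flats of $ M $.

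Next I would pull this decomposition back to $ S $. Using the standard formula $ \cl_{M|S}(A) = \cl_{M}(A) \cap S $, one checks that for any flat $ F $ of $ M $ the intersection $ F \cap S $ is a flat of $ M|S $. Applying this to $ F_{1} $ and $ F_{2} $, and using $ S \subseteq \cl_{M}(S) = F_{1} \cup F_{2} $, we obtain the decomposition $ S = (S \cap F_{1}) \cup (S \cap F_{2}) $ as a union of two flats of $ M|S $. Properness is immediate: if $ S \cap F_{i} = S $ for some $ i $, then $ S \subseteq F_{i} $, whence $ \cl_{M}(S) \subseteq F_{i} \subsetneq \cl_{M}(S) $, a contradiction. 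Thus $ M|S $ has its ground set written as a union of two proper flats, contradicting roundness of $ S $.

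I do not expect a serious obstacle: the argument is essentially bookkeeping, using only the definitions of roundness, restriction, and closure. The only delicate point is the interaction between closure in $ M $ and closure in $ M|S $, which is a standard fact about restrictions, and the identification of flats of $ M|\cl_{M}(S) $ with flats of $ M $ lying in $ \cl_{M}(S) $, which is Proposition~\ref{Oxley interval}(\ref{Oxley interval 2}).
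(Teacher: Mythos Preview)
Your argument is correct. The paper does not actually supply its own proof of this proposition; it merely cites Kung and Probert and uses the result to finish the preceding proof of Proposition~\ref{frame round}. So there is nothing to compare against beyond noting that your contrapositive argument is the standard one: pull back a decomposition $\cl_{M}(S)=F_{1}\cup F_{2}$ to $S=(S\cap F_{1})\cup(S\cap F_{2})$, observe that each $S\cap F_{i}$ is a flat of $M|S$ via the formula $\cl_{M|S}(A)=\cl_{M}(A)\cap S$, and check properness using $\cl_{M}(S)\subseteq F_{i}$ whenever $S\subseteq F_{i}$. All of this is routine, and your identification of the only mildly delicate point (the interaction of closure in $M$ versus $M|S$) is accurate.
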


\begin{Example}
The matroids on Weyl arrangements $ \mathcal{A}_{\times}\big(K_{n}^{\{1\}}\big) $ of type $ A_{n-1} $ and $ \mathcal{A}_{\times}\big(\mathring{K}_{n}^{\{\pm 1\}}\big) $ of type $ B_{n} $ are round.
The matroid on Weyl arrangement $ \mathcal{A}_{\times}\big(K_{n}^{\{\pm 1\}}\big) $ of type $ D_{n} \ (n \geq 3) $ is also round.
\end{Example}

In the case of simple graphs, recall that a subgraph isomorphic to a complete graph corresponds to a modular flat.
Here is a generalization for frame matroids.

\begin{Proposition}\label{frame modular}
Let $ \Gamma $ be a gain graph with a finite gain group $ G $.
Suppose that $ \Gamma $ has an induced subgraph isomorphic to $ \mathring{K}_{n}^{G} $.
Then the corresponding flat of~$ M_{\times}(\Gamma) $ is modular.
\end{Proposition}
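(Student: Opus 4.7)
The plan is to apply Brylawski's modular short-circuit axiom (Theorem \ref{Brylawsky modular short-circuit axiom}). Write $W$ for the vertex set of the induced subgraph isomorphic to $\mathring{K}_n^G$, and let $X$ denote the set of edges and loops with both endpoints in $W$; by hypothesis $X$ contains every edge $\{a,b\}_g$ for distinct $a, b \in W$ and every $g \in G$, together with every loop $\ell_a$ for $a \in W$. Before invoking the axiom I would verify that $X$ is a flat. If some $e \notin X$ belonged to $\cl_M(X)$, then Theorem \ref{Zaslavsky frame matroid}(\ref{Zaslavsky frame matroid b}) would supply a circuit $C$ (a balanced cycle, contrabalanced handcuff, or contrabalanced theta) with $e \in C$ and $C \setminus \{e\} \subseteq X$; since every vertex of such a circuit is incident to at least two of its elements, every vertex of $C$ would be forced into $W$, and in particular the endpoints of $e$ would lie in $W$, contradicting $e \notin X$.

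The key tool is then a shortcut principle enabled by the rich structure of $X$: because $\mathring{K}_n^G$ realizes every element of $G$ on every pair of distinct $W$-vertices and carries a loop at every $W$-vertex, any path in $\Gamma$ with endpoints $a, b \in W$ ($a \neq b$) can be closed off by the single edge $\{a, b\}_h \in X$ of matching gain $h$ to form a balanced cycle, and any walk of nontrivial gain from $a$ to $a$ with $a \in W$ can be paired with the loop $\ell_a \in X$ to form a contrabalanced tight handcuff. Given a circuit $C$ and an atom $e \in C \setminus X$, I would locate $V(C) \cap W$: these vertices partition each cycle- or path-component of $C$ into maximal arcs whose interior vertices lie outside $W$; an arc of length one joining two $W$-vertices lies in $X$, while an arc of length at least two has some interior vertex outside $W$ and lies entirely outside $X$. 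The aim is then to build $C'$ out of the arc containing $e$ together with a single shortcut element $x \in X$.

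For $C$ a balanced cycle the arc $P$ containing $e$ runs between two distinct $W$-vertices $a, b$, and taking $x = \{a,b\}_h \in X$ of the appropriate gain produces a balanced cycle $\{x\} \cup P$; the degenerate sub-case $|V(C) \cap W| \leq 1$ yields $C \setminus X = C$, so any $x \in X$ and $C' = C$ work. For $C$ a contrabalanced handcuff or theta, the same idea handles the arcs with two distinct $W$-endpoints, but when the arc through $e$ closes at a single $W$-vertex or has an endpoint outside $W$, one must invoke the loop shortcut and extend through another cycle or path of $C$ to a $W$-vertex, producing a contrabalanced loose handcuff as the circuit $C'$. I expect this case analysis — especially when $e$ is a loop, when $V(C_1) \cap W$ or $V(C_2) \cap W$ is empty for a tight handcuff, or when the connecting path of a loose handcuff meets $W$ only partially — to be the main obstacle, though in every sub-case the finiteness of $G$ and the completeness of $\mathring{K}_n^G$ ensure that a shortcut element of the required gain is always available in $X$.
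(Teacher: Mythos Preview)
Your strategy---apply the modular short-circuit axiom and build the required circuit $C'$ from a piece of $C\setminus X$ together with a single ``shortcut'' element of $X$ (a gain-matching edge or a loop of $\mathring{K}_n^G$)---is exactly the paper's. The only substantive difference is how the casework is organized, and the paper's organization dissolves precisely the obstacle you flagged. Instead of splitting on the type of the circuit $C$ (balanced cycle, handcuff, theta) and then on how the arcs meet $W$, the paper passes immediately to the connected component $S$ of $C\setminus X$ containing $e$. Since $S$ is a proper subset of a circuit it is independent, so by Theorem~\ref{Zaslavsky frame matroid}(\ref{Zaslavsky frame matroid a}) the connected graph $S$ is either a tree or has exactly one (unbalanced) cycle; and any leaf of $S$ must lie in $W$, because the missing $C$-edges at such a vertex are forced into $X$. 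This leaves just three cases---$S$ a tree, $S$ unicyclic with $e$ on the cycle, $S$ unicyclic with $e$ off the cycle---handled uniformly: find a path in $S$ through $e$ ending at two $W$-leaves (close with the matching-gain edge) or at one $W$-leaf and the unbalanced cycle (close with the loop at that leaf, forming a contrabalanced handcuff). Your arc decomposition would reach the same conclusions, but the connected-component viewpoint lets you skip the handcuff/theta case split entirely.
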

\begin{proof}
Let $ X $ denote the corresponding flat.
We will prove modularity of $ X $ by using Theorem~\ref{Brylawsky modular short-circuit axiom}.
Let $ C $ be a circuit and take an atom $ e \in C \setminus X $.
Suppose that $ S $ is the connected component of $ C \setminus X $ containing $ e $.
We may assume that $ C \cap X \neq \varnothing $.
Then $ S $ is an independent set.
By Theorem~\ref{Zaslavsky frame matroid}(\ref{Zaslavsky frame matroid a}), $ S $ has no balanced cycles and at most one unbalanced cycle. Moreover~$ S $ has at least one vertex of the subgraph~$ \mathring{K}_{n}^{G} $ since $ C \cap X \neq \varnothing $.

First, assume that $ S $ has an unbalanced cycle containing $ e $ (including the case $ e $ itself is a loop).
Then the unbalanced cycle and the loop $ x \in X $ of a vertex belonging to both $ S $ and $ \mathring{K}_{n}^{G} $ with the path connecting them form a handcuff~$ C^{\prime} $, which is a desired circuit since $ e \in C^{\prime} \subseteq \{x\} \cup S \subseteq \{x\} \cup (C\setminus X) $.

Second, suppose that $ S $ has an unbalanced cycle not containing $ e $.
If we delete the unbalanced cycle, then the remaining graph is a forest.
Hence we can obtain a path which contains $ e $ and connects the unbalanced cycle and a leaf.
The leaf is a vertex of the subgraph $ \mathring{K}_{n}^{G} $ since every circuit has no leaves by Theorem \ref{Zaslavsky frame matroid}(\ref{Zaslavsky frame matroid b}).
Then the loop $ x \in X $ of the leaf and the unbalanced cycle with the path form a handcuff, which is a desired circuit.

Finally, consider the case $ S $ has no unbalanced cycle.
In this case, $ S $ is a tree.
Therefore we can obtain a path which contains $ e $ and connecting leaves of $ S $.
This path contains at least $ 3 $ vertices since $ e \neq X $.
The endvertices of the path belong to the subgraph $ \mathring{K}_{n}^{G} $.
We can choose $ x \in X $ between the endvertices such that~$ x $ and the path form a balanced cycle, which is a~desired circuit and hence we can conclude~$ X $ is modular.
\end{proof}

\begin{Remark}
Contrary to Proposition \ref{frame modular}, $ K_{n}^{G}$ $(|G| \geq 2) $ may yield a non-modular flat.
For example, see Fig.~\ref{Fig:fish}.
The edges between the middle and right vertices denote the flat corresponding to $ K_{2}^{G} $.
Choose two edges in it and consider the contrabalanced loose handcuff formed with the loop and the edge between the left and middle vertices, which is a circuit by Theorem~\ref{Zaslavsky frame matroid}(\ref{Zaslavsky frame matroid b}).
Using Theorem~\ref{Brylawsky modular short-circuit axiom}, we can conclude that the flat corresponding to $ K_{2}^{G} $ is not modular.
In a~similar way, we can construct a frame matroid in which the flat corresponding to $ K_{n}^{G}$ $(n \geq 3) $ is not modular.
\end{Remark}
\begin{figure}[t]\centering

\begin{tikzpicture}
\draw (0,0) node[v](1){};
\draw (2,0) node[v](2){};
\draw (4,0) node[v](3){};
\draw (1)--(2)--(3);
\draw[scale=4] (1) to[in=135,out=225,loop] (1);
\draw (2) to[bend left=90] (3);
\draw (2) to[bend left=45] (3);
\draw (2) to[bend left=20] (3);
\draw (2) to[bend right=90] (3);
\draw (2) to[bend right=45] (3);
\draw (2) to[bend right=20] (3);
\end{tikzpicture}
\caption{The flat corresponding to $ K_{2}^{G} $ is not modular.}\label{Fig:fish}
\end{figure}
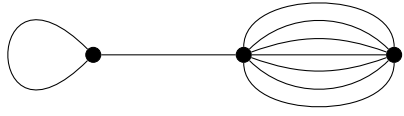

Next, we introduce bias-simplicial vertices, which is a generalization of simplicial vertices of simple graphs.
\begin{Definition}
A vertex $ v $ in a gain graph $ \Gamma $ is called \textit{bias simplicial} if the following conditions hold.
\begin{enumerate}[(i)]\itemsep=0pt
\item If $ \{u,v\}_{g}, \{v,w\}_{h} \in E_{\Gamma} $, then $ \{u,w\}_{gh} \in E_{\Gamma} $.
\item If $ \{u,v\}_{g}, \{u,v\}_{h} \in E_{\Gamma} $, and $ g \neq h $, then $ u \in L_{\Gamma} $.
\item If $ \{u,v\}_{g} \in E_{\Gamma} $ and $ v \in L_{\Gamma} $, then $ u \in L_{\Gamma} $.
\end{enumerate}
\end{Definition}

Zaslavsky \cite[Theorem 2.1]{zaslavsky2001supersolvable-ejoc} characterized modular coatoms of frame matroids.
The following theorem is an excerpt.
(Note that one type of modular coatom is missing in the classification. See Koban \cite[Theorem $ 2.1^{\prime} $]{koban2004comments-ejoc} for the complete classification.)
\begin{Theorem}[{Zaslavsky \cite[Theorem 2.1(1)]{zaslavsky2001supersolvable-ejoc}}]\label{Zaslavsky bias simplicial}
Let $ \Gamma $ be a gain graph and $ v $ a bias-simplicial vertex.
Then the flat of $ M_{\times}(\Gamma) $ corresponding to the induced subgraph $ \Gamma \setminus v $ is modular.
\end{Theorem}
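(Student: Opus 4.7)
The plan is to verify modularity of the flat $ X $ of $ M_{\times}(\Gamma) $ corresponding to $ \Gamma \setminus v $ by invoking the modular short-circuit axiom (Theorem~\ref{Brylawsky modular short-circuit axiom}). The atoms of $ M_{\times}(\Gamma) $ outside $ X $ are precisely the edges incident to $ v $ together with $ L_{v} $ if $ v \in L_{\Gamma} $. Hence, for every circuit $ C $ and every atom $ e \in C \setminus X $, I must exhibit an atom $ x \in X $ and a circuit $ C^{\prime} $ with $ e \in C^{\prime} \subseteq \{x\} \cup (C \setminus X) $.

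First, I would locate a second atom $ e^{\prime} \in (C \setminus X) \setminus \{e\} $ at $ v $. By Theorem~\ref{Zaslavsky frame matroid}(\ref{Zaslavsky frame matroid b}) the circuit $ C $ is a balanced cycle, a contrabalanced handcuff, or a contrabalanced theta, and by Theorem~\ref{Zaslavsky frame matroid}(\ref{Zaslavsky frame matroid a}) a single loop is independent. A case analysis on the combinatorial shape of $ C $ then shows that if $ e $ is an atom of $ C $ meeting $ v $, at least two atoms of $ C $ meet $ v $: either two edges at $ v $, or a single edge at $ v $ together with $ L_{v} $.

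Given such a pair $ e,e^{\prime} $, the three conditions defining a bias-simplicial vertex each convert this pair into a three-element circuit avoiding all other atoms of $ C $. Concretely, if $ e = \{u,v\}_{g} $ and $ e^{\prime} = \{v,w\}_{h} $ with $ u \neq w $, then condition~(i) yields $ x \coloneqq \{u,w\}_{gh} \in E_{\Gamma \setminus v} $ and a balanced triangle $ \{e, e^{\prime}, x\} $; if $ e = \{u,v\}_{g} $ and $ e^{\prime} = \{u,v\}_{h} $ with $ g \neq h $, then condition~(ii) gives $ u \in L_{\Gamma} $ and a tight contrabalanced handcuff $ \{e, e^{\prime}, L_{u}\} $; and if one of $ e,e^{\prime} $ equals $ L_{v} $ and the other is some $ \{u,v\}_{g} $, then condition~(iii) again gives $ u \in L_{\Gamma} $ and a loose contrabalanced handcuff $ \{e, e^{\prime}, L_{u}\} $. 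In each case $ C^{\prime} \coloneqq \{e, e^{\prime}, x\} $ is a circuit by Theorem~\ref{Zaslavsky frame matroid}(\ref{Zaslavsky frame matroid b}), contains $ e $, and lies in $ \{x\} \cup (C \setminus X) $, so the short-circuit axiom yields modularity of $ X $.

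The main obstacle I anticipate is the bookkeeping in the degree argument that guarantees the existence of the second atom $ e^{\prime} $: one has to inspect tight handcuffs in which $ v $ is the shared vertex, loose handcuffs whose $ v $-side cycle is the loop $ L_{v} $ (so that $ L_{v} $ itself serves as the second atom), and thetas with $ v $ at an endpoint. Once this combinatorial input is secured, the heart of the argument is a direct translation of conditions~(i)--(iii) of bias-simpliciality into the three species of three-element circuits available in a frame matroid.
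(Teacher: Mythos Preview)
The paper does not supply a proof of this theorem; it is quoted as a result of Zaslavsky and used as a black box. Your direct argument via the modular short-circuit axiom (Theorem~\ref{Brylawsky modular short-circuit axiom}) is correct and is essentially the natural proof from scratch.

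The degree check---that any frame-matroid circuit passing through a vertex $v$ meets $v$ in at least two atoms---is exactly the structural input needed, and your case split over balanced cycles, tight handcuffs, loose handcuffs, and thetas covers it; the only delicate spots are the handcuffs in which one constituent cycle is the loop at $v$, and you have accounted for those. The translation of the three bias-simpliciality conditions into the three species of three-element circuits (balanced triangle, contrabalanced tight handcuff on a digon plus a loop, contrabalanced loose handcuff on two loops joined by one edge) is precisely the right dictionary, and in each case the produced atom $x$ lies in $\Gamma\setminus v$ because the neighbour $u$ (or $u,w$) differs from $v$.

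Two small points you may want to make explicit. First, the short-circuit axiom as stated in the paper assumes $X$ nonempty; if $\Gamma\setminus v$ is the null graph, then $X=\hat 0$ is modular trivially. Second, your argument simultaneously shows that $X$ is a flat (no single edge or loop incident to $v$ can lie in $\cl_M(X)$, since any circuit through $v$ contributes at least two atoms outside $X$), so ``the flat corresponding to $\Gamma\setminus v$'' is literally $E_{\Gamma\setminus v}\sqcup L_{\Gamma\setminus v}$.
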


One can show that the frame matroid $ M_{\times}\big(\mathring{K}_{n}^{G}\big) $ is supersolvable for any finite group $ G $ by Theorem~\ref{Zaslavsky bias simplicial}.

Zaslavsky \cite[Theorem 2.2]{zaslavsky2001supersolvable-ejoc} characterized supersolvability of frame matroids as the minimal class of gain graphs which satisfies the conditions (i)--(v) in the following theorem and is closed under taking disjoint unions.
We can replace disjoint unions with modular joins for modular extendedness (condition (vi)).

\begin{Theorem}\label{main frame}
Let $ G $ be a finite subgroup of the multiplicative group of a field $ \mathbb{K} $ and $ \mathcal{C}^{G}_{\times} $ the minimal class of gain graphs with gain group $ G $ which satisfies the following conditions.
\begin{enumerate}[$(i)$]\itemsep=0pt
\item The null graph is a member of $ \mathcal{C}^{G}_{\times} $.
\item $ K_{2}^{G} \in \mathcal{C}^{G}_{\times} $.
\item If $ \{\pm 1 \} \subseteq G $, then $ K_{3}^{\{\pm 1\}} \in \mathcal{C}^{G}_{\times} $.
\item If $ \{\pm 1 \} \subseteq G $, then every connected loopless gain graph $ \Gamma $ over $ \{\pm 1\} $ such that the positive edges form a chordal graph, the negative edges form a star $ \{u, v_{1} \}_{-1}, \dots, \{u, v_{r}\}_{-1} $, and $ v_{1}, \dots, v_{r} $ form a clique consisting of positive edges is a member of $ \mathcal{C}^{G}_{\times} $.
\item If $ \Gamma $ has a bias-simplicial vertex $ v $ and $ \Gamma \setminus v \in \mathcal{C}^{G}_{\times}$.
\item If there exists a decomposition $ V_{\Gamma} = V_{1} \cup V_{2} $ such that $ \Gamma[V_{1}], \Gamma[V_{2}] \in \mathcal{C}^{G}_{\times}, \ E_{\Gamma} = E_{\Gamma[V_{1}]} \cup E_{\Gamma[V_{2}]} $, and $ \Gamma[V_{1} \cap V_{2}] \simeq \mathring{K}_{n}^{G} $ for some $ n \geq 0 $, then $ \Gamma \in \mathcal{C}^{G}_{\times} $.
\end{enumerate}
Then for every $ \Gamma \in \mathcal{C}^{G}_{\times} $ the corresponding arrangement $ \mathcal{A}_{\times}(\Gamma) $ is divisionally free.
\end{Theorem}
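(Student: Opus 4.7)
The plan is to reduce the theorem to its matroid-theoretic core: I claim that $M_\times(\Gamma) \in \mathcal{ME}$ for every $\Gamma \in \mathcal{C}^G_\times$. Granted this, Theorem~\ref{Zaslavsky frame arrangement} identifies the linear dependence matroid $M(\mathcal{A}_\times(\Gamma))$ with $M_\times(\Gamma)$, and Theorem~\ref{main divisionally free} at once yields divisional freeness of $\mathcal{A}_\times(\Gamma)$. The claim is proved by induction on the construction of $\Gamma$ in $\mathcal{C}^G_\times$, treating each of the six defining conditions in turn.

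For the base cases (i)--(iv), I would show that each resulting matroid is supersolvable, hence in $\mathcal{ME}$ by Theorem~\ref{main ss}. Condition (i) is vacuous. For (ii), Theorem~\ref{Zaslavsky frame matroid} gives $M_\times(K_2^G) \simeq U_{2,|G|}$: two distinct edges $\{1,2\}_g,\{1,2\}_h$ form an unbalanced $2$-cycle (independent by Theorem~\ref{Zaslavsky frame matroid}(\ref{Zaslavsky frame matroid a})), while any three edges form a contrabalanced theta (a circuit by Theorem~\ref{Zaslavsky frame matroid}(\ref{Zaslavsky frame matroid b})); in $U_{2,n}$ every atom is modular, so the matroid is supersolvable. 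For (iii), $M_\times(K_3^{\{\pm 1\}})$ is the matroid of the Weyl arrangement of type $D_3$, isomorphic to the braid matroid $M(K_4)$, and hence supersolvable. For (iv), the structural hypothesis permits building a saturated chain of modular flats by iteratively producing bias-simplicial vertices via Theorem~\ref{Zaslavsky bias simplicial}: the chordality of the positive subgraph together with the positive clique structure on $v_1,\dots,v_r$ guarantees that simplicial vertices of the positive graph outside the star satisfy the bias-simpliciality conditions, permitting the recursion.

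The inductive step (v) is immediate: Theorem~\ref{Zaslavsky bias simplicial} identifies $E_{\Gamma\setminus v}$ with a modular coatom of $M_\times(\Gamma)$, so Definition~\ref{modularly extended matroid}(ii) applies. The substantive step is (vi). Writing $X$ for the flat corresponding to $\Gamma[V_1\cap V_2] \simeq \mathring{K}_n^G$, Proposition~\ref{frame modular} gives that $X$ is a modular flat, and Proposition~\ref{frame round} gives that $X$ is round. The key point is then to verify that each $E_{\Gamma[V_i]}$ is itself a modular flat of $M_\times(\Gamma)$; combined with $E_\Gamma = E_{\Gamma[V_1]}\cup E_{\Gamma[V_2]}$, this exhibits $M_\times(\Gamma) = P_X(M_\times(\Gamma[V_1]), M_\times(\Gamma[V_2]))$, so that Definition~\ref{modularly extended matroid}(iii) delivers $M_\times(\Gamma)\in\mathcal{ME}$. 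To establish modularity of $E_{\Gamma[V_i]}$ I would verify the modular short-circuit axiom (Theorem~\ref{Brylawsky modular short-circuit axiom}) using the circuit description from Theorem~\ref{Zaslavsky frame matroid}(\ref{Zaslavsky frame matroid b}): a circuit $C$ containing an atom outside $E_{\Gamma[V_i]}$ must cross through the shared vertex set $V_1\cap V_2$, and the density of edges and loops in $\mathring{K}_n^G$ always supplies a short-circuit atom in $X \subseteq E_{\Gamma[V_i]}$.

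The main obstacle is the circuit case analysis in (vi): the three circuit types of Theorem~\ref{Zaslavsky frame matroid}(\ref{Zaslavsky frame matroid b}) --- balanced cycles, contrabalanced tight and loose handcuffs, and contrabalanced thetas --- can straddle the decomposition in several combinatorially distinct patterns, and a short-circuit replacement must be produced in each; the proof should exploit that crossings are forced to enter and leave through vertices of $V_1\cap V_2$, where both a dense web of gain-labelled edges and loops at every vertex are available. A secondary difficulty is the choice of deletion order in case (iv), which must be compatible with a perfect elimination ordering of the positive chordal subgraph so that bias simpliciality is preserved throughout the recursion.
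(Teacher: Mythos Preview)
Your strategy is exactly the paper's: show $M_\times(\Gamma)\in\mathcal{ME}$ for every $\Gamma\in\mathcal{C}^G_\times$ by induction on the construction, then conclude via Theorems~\ref{Zaslavsky frame arrangement} and~\ref{main divisionally free}. The only difference is in the level of detail. The paper compresses the base cases (i)--(iv) into the observation (made just before the theorem statement) that conditions (i)--(v) together with disjoint unions are precisely Zaslavsky's characterization of supersolvable frame matroids, so no separate argument is given for them; your explicit treatments of (ii), (iii), (iv) are correct but unnecessary once that classification is invoked. For step (vi), the paper simply cites Propositions~\ref{frame round} and~\ref{frame modular} and implicitly uses the equivalence, recorded in the Remark following the definition of modular join, between the paper's definition and Brylawski's generalized parallel connection: since Proposition~\ref{frame modular} applied inside each $\Gamma[V_i]$ makes $X$ modular in $M_\times(\Gamma[V_i])$, that equivalence already yields that $E_{\Gamma[V_i]}$ are modular flats of $M_\times(\Gamma)$. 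Your proposed direct verification of the short-circuit axiom for $E_{\Gamma[V_i]}$ works too and is more self-contained, but it duplicates the circuit case analysis already carried out in the proof of Proposition~\ref{frame modular}; you can spare yourself the ``main obstacle'' by appealing to the Brylawski equivalence instead.
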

\begin{proof}
By Propositions \ref{frame round}, \ref{frame modular}, and Theorem \ref{Zaslavsky bias simplicial}, the frame matroid $ M_{\times}(\Gamma) $ is modularly extended. By Theorems~\ref{main divisionally free} and~\ref{Zaslavsky frame arrangement} we can conclude that $ \mathcal{A}_{\times}(\Gamma) $ is divisionally free.
\end{proof}

\begin{Example}\label{example frame}
Let $ \bowtie $ be a signed graph described in Fig.~\ref{Fig:bowtie}, where a \textit{signed graph} is a gain graph with gain group $ \{\pm 1\} $.
Let $ \mathring{\bowtie} $ denote the signed graph $ \bowtie $ with the loops attached to every vertex.
Then $ \mathring{\bowtie} \in \mathcal{C}^{\{\pm 1\}}_{\times} $ and the arrangement $ \mathcal{A}_{\times}(\mathring{\bowtie}) $ is the arrangement $ \mathcal{A} $ in Example \ref{example intro}, that is, an arrangement consisting of the following hyperplanes
\begin{gather*}
\{z=0\},\,\{x_{1}=0\},\,\{x_{2}=0\},\,\{x_{1}-z=0\},\,\{x_{2}-z=0\},\,\{x_{1}-x_{2}=0\},\,\{x_{1}+x_{2}=0\}, \\
\{y_{1}=0\},\,\{y_{2}=0\},\,\{y_{1}-z=0\},\,\{y_{2}-z=0\},\,\{y_{1}-y_{2}=0\},\,\{y_{1}+y_{2}=0\}.
\end{gather*}
The signed graph $ \mathring{\bowtie} $ is not of type (i)--(iv) in Theorem~\ref{main frame}.
Moreover, $ \mathring{\bowtie} $ has no bias-simplicial vertex. Therefore $ \mathcal{A}_{\times}(\mathring{\bowtie}) $ is not supersolvable but divisionally free.
\end{Example}

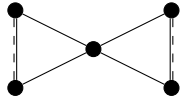
\begin{figure}[t]\centering
\begin{tikzpicture}[scale=.6]
\draw (0,0) node[v](z){};
\draw (-1.73, 0.86) node[v](x1){};
\draw (-1.73, -0.86) node[v](x2){};
\draw ( 1.73, 0.86) node[v](y1){};
\draw ( 1.73, -0.86) node[v](y2){};
\draw[decoration={dashsoliddouble}, decorate] (x2)--(x1);
\draw[decoration={dashsoliddouble}, decorate] (y1)--(y2);
\draw (x1)--(z)--(x2);
\draw (y1)--(z)--(y2);
\end{tikzpicture}
\caption{The signed graph $ \bowtie $ (Dashed line segments denote negative edges).}\label{Fig:bowtie}
\end{figure}

\begin{Remark}
Since $ K_{n}^{\{1\}}, \mathring{K}_{n}^{\{\pm 1\}} \in \mathcal{C}_{\times}^{\{\pm 1\}} $, the Weyl arrangements of type $ A_{n-1} $ and $ B_{n} $ are divisionally free (actually these are supersolvable).

If $ n \geq 4 $, then $ K_{n}^{\{\pm 1\}} \not\in \mathcal{C}_{\times}^{\{\pm 1\}} $.
Actually the frame matroid $ M_{\times}\big(K_{n}^{\{\pm 1\}}\big) $ is not modularly extended since it is round by Proposition~\ref{frame round} and has no modular coatoms by \cite[Theorem~2.1]{zaslavsky2001supersolvable-ejoc} and \cite[Theorem~$2.1^{\prime} $]{koban2004comments-ejoc}.

However, it is well known that every Weyl arrangement is free by Saito \cite{saito1977uniformization-rk, saito1980theory-jotfostuotsam}, including the Weyl arrangement $ \mathcal{A}_{\times}\big(K_{n}^{\{\pm 1\}}\big) $ of type $ D_{n} $, which is also inductively free (see \cite[Example~2.6]{jambu1984free-aim}).
Thus the Weyl arrangement of type $ D_{n}$ $(n \geq 4) $ is inductively free (and hence divisionally free) but not modularly extended.
\end{Remark}

\begin{Question}Does there exist a modularly extended arrangement which is not inductively free?
\end{Question}

\subsubsection{Extended lift matroids and associated arrangements}
\begin{Theorem}[{Zaslavsky \cite[Theorem~3.1]{zaslavsky1991biased-joctsb}}]\label{Zaslavsky extended lift matroid}
Let $ \Gamma $ be a loopless gain graph.
Then the following conditions define the same matroid on $ E_{\Gamma} \sqcup \{\infty\} $.
\begin{enumerate}[$(a)$]\itemsep=0pt
\item\label{Zaslavsky extended lift matroid a} A subset of $ E_{\Gamma} \sqcup \{\infty\} $ is independent if and only if has no balanced cycle and contains at most either $ \infty $ or one unbalanced cycle.
\item\label{Zaslavsky extended lift matroid b} A subset of $ E_{\Gamma} \sqcup \{\infty\} $ is a circuit if and only if it is a balanced cycle, a contrabalanced tight handcuff, a contrabalanced theta, the union of two vertex-disjoint unbalanced cycles, or the union of $ \{\infty\} $ and an unbalanced cycle.
\item\label{Zaslavsky extended lift matroid c} A subset $ X \in E_{\Gamma} \sqcup \{\infty\} $ is a flat if and only if $ X $ satisfies the one of the following conditions.
\begin{enumerate}[$(i)$]\itemsep=0pt
\item$ X \not\ni \infty $ and $ X $ is balanced and balance-closed.
\item $ X \ni \infty $ and $ X\setminus\{\infty\} $ is the union of the edge sets of the induced subgraphs $ \Gamma[W_{1}], \dots,\allowbreak \Gamma[W_{r}] $, where $ W_{1}, \dots, W_{r} $ are mutually disjoint subsets of $ V_{\Gamma} $.
\end{enumerate}
\end{enumerate}
We call the matroid the \textit{extended lift matroid} of $ \Gamma $, denoted $ M_{+}(\Gamma) $.
\end{Theorem}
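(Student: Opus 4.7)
The plan is to adopt (a) as the definition of the independent sets, verify the matroid axioms, and then deduce (b) and (c). Heredity is immediate since removing elements only loses cycles. The nontrivial point is the exchange axiom, which I would establish via submodularity of a rank function. Writing $T = S \setminus \{\infty\}$, $c(T)$ for the number of connected components of $T$, and $u(T)$ for the number of those components containing an unbalanced cycle, define
\begin{align*}
r(S) \coloneqq |V(T)| - c(T) + \min\bigl(u(T) + [\infty \in S],\, 1\bigr).
\end{align*}
A direct check shows $|S| = r(S)$ precisely when $S$ satisfies (a). Submodularity of $r$ follows from analyzing how the function changes when a single element is added: an edge either merges two components, creates a balanced cycle, or turns a balanced component unbalanced, while adding $\infty$ increases the last term by $1$ exactly when $u(T) = 0$.

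To derive (b), enumerate the minimal $C$ with $|C| > r(C)$. If $\infty \in C$, then $C \setminus \{\infty\}$ must be a single unbalanced cycle, yielding the last circuit type. If $\infty \notin C$, then $C$ is a minimal dependent set of the ordinary lift matroid $M_{+}(\Gamma) \setminus \{\infty\}$; the combinatorial options are a balanced cycle, a contrabalanced theta, a contrabalanced tight handcuff, or two vertex-disjoint unbalanced cycles. A contrabalanced loose handcuff is not minimal because deleting its connecting path already leaves a dependent pair of vertex-disjoint unbalanced cycles.

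For (c), compute closures using the circuits from (b). If $\infty \notin X$ and $X$ contains an unbalanced cycle $C$, then circuit (v) places $\infty \in \cl(X) \setminus X$, so any flat missing $\infty$ is balanced; the balance-closedness condition then captures the remaining requirement that no further edge outside $X$ completes a balanced cycle with $X$, giving (c.i). If $\infty \in X$, then for any two edges in $X$ sharing a vertex, the third side of their triangle lies in $\cl(X)$ via either a balanced-cycle circuit or an $\infty$-circuit; iterating shows that on each connected vertex block $W_i$ supported by $X$ the closure recovers the full edge set of $\Gamma[W_i]$, giving (c.ii). The main technical hurdle is the submodularity verification, whose case analysis must simultaneously track the $\infty$-marker, unbalanced-cycle creation, and component merging.
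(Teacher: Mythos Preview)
The paper does not supply a proof of this theorem; it is quoted with attribution to Zaslavsky \cite[Theorem~3.1]{zaslavsky1991biased-joctsb} and thereafter used as a black box. There is therefore no in-paper argument to compare against, and your outline is essentially the standard route (rank function, then circuits, then flats) that Zaslavsky himself takes.

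Judged on its own, the rank formula you write down is the correct one for $M_{+}(\Gamma)$, and your circuit enumeration in~(b) is accurate, including the reason loose handcuffs are excluded. The genuine gap is in part~(c), where you argue only one implication in each case. For~(c.i) you explain why a flat avoiding $\infty$ must be balanced and balance-closed, but not the converse: to see that a balanced, balance-closed $X$ with $\infty\notin X$ is already a flat, you must also rule out the other circuit types, and the point is that any contrabalanced theta, tight handcuff, or pair of vertex-disjoint unbalanced cycles contained in $X\cup\{e\}$ necessarily carries an unbalanced cycle avoiding $e$, hence lying in $X$ and contradicting balancedness. For~(c.ii) you likewise omit the verification that a set of the stated form is closed; moreover, your ``third side of the triangle'' step literally treats only length-two connections, whereas the clean argument takes an arbitrary path in $X\setminus\{\infty\}$ from $u$ to $v$ together with any edge $\{u,v\}_g$, forming a cycle that is itself a circuit if balanced and becomes one upon adjoining $\infty$ if not. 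These are fillable omissions rather than errors of approach, but as written the sketch is not yet a proof, and the submodularity verification you flag as the main hurdle really does require the compensation between the graphic-rank term and the $\min(\cdot,1)$ term, which you have not spelled out.
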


When $ \Gamma $ is a loopless gain graph on $ [n] $ whose gain group is a subgroup of the additive group~$ \mathbb{K}^{+} $ of a field $ \mathbb{K} $, we may associate $ \Gamma $ with an arrangement~$ \mathcal{A}_{+}(\Gamma) $ in~$ \mathbb{K}^{n+1} $ defined by the following
\begin{gather*}
\mathcal{A}_{+}(\Gamma) \coloneqq \{\{z=0\}\} \cup \{\{x_{i}-x_{j}=gz\} \,|\, \{i,j\}_{g} \in E_{\Gamma}\},
\end{gather*}
where $ z, x_{1}, \dots, x_{n} $ denote the coordinate of~$ \mathbb{K}^{n+1} $.
This arrangement is the cone over the affine arrangement consisting of hyperplanes corresponding to edges of~$ \Gamma $ and the element $ \infty $ corresponds to the hyperplane at infinity~$ \{z=0\} $.

\begin{Remark}
Consider gain graphs with gain group $ \mathbb{Z} $.
For a positive integer $ a $, the arrangements $ \mathcal{A}_{+}(\Gamma) $ with edge sets
\begin{gather*}
 \{\{i,j\}_{g} \,|\, 1 \leq i < j \leq n, \, g \in \{-a,-a+1 \dots, a\}\}, \\
 \{\{i,j\}_{g} \,|\, 1 \leq i < j \leq n, \, g \in \{-a+1, -a+2, \dots, a\}\}, \\
 \{\{i,j\}_{g} \,|\, 1 \leq i < j \leq n, \, g \in \{-a+2,-a+3, \dots, a\}\},
\end{gather*}
are the cones of the extended Catalan, Shi, and Linial arrangements.
The cones of the extended Catalan and Shi arrangements are known to be free \cite{athanasiadis1998free-ejoc,edelman1996free-dcg, yoshinaga2004characterization-im}.

Postnikov and Stanley \cite{postnikov2000deformations-joctsa} computed the characteristic polynomials of the deformations of Weyl arrangement of type~$ A $, including these arrangements.
Moreover, all roots of the characteristic polynomial of the extended Linial arrangement have real part $ (2a-1)n/2 $ \cite[Theorem~9.12]{postnikov2000deformations-joctsa}.
Combining Terao's factorization theorem \cite{terao1981generalized-im}, we have that if $ n = 3 $, then the cone over the extended Linial arrangement is not free and thus every cone over the extended Linial arrangement is not free since it contains the extended Linial arrangement of dimension~$ 3 $ as a~localization (see, for example \cite[Theorem~4.37]{orlik1992arrangements}).

Recently, Nakashima and the author \cite{nakashima2019enumeration-a} give explicit formulas for the number of flats of extended Catalan and Shi arrangements with theory of gain graphs and combinatorial species.
\end{Remark}

\begin{Theorem}[{Zaslavsky \cite[Theorem 3.1(a)]{zaslavsky2003biased-joctsb}}]\label{Zaslavsky extended lift arrangement}
The linear dependence matroid on $ \mathcal{A}_{+}(\Gamma) $ is isomorphic to the extended lift matroid $ M_{+}(\Gamma) $.
\end{Theorem}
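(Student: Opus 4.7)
The plan is to set up the natural bijection $ \phi \colon E_{\Gamma} \sqcup \{\infty\} \to \mathcal{A}_{+}(\Gamma) $ sending $ \infty \mapsto \{z=0\} $ and $ \{i,j\}_{g} \mapsto \{x_{i}-x_{j}=gz\} $, and to prove that a subset $ S $ is independent in $ M_{+}(\Gamma) $ if and only if the defining forms of its image are linearly independent in $ (\mathbb{K}^{n+1})^{\ast} $. Writing $ \alpha_{\{i,j\}_{g}} = x_{i} - x_{j} - g z $ (well-defined up to sign thanks to the equivalence $ (u,v,g) \sim (v,u,-g) $) and $ \alpha_{\infty} = z $, the entire argument reduces to explicit linear algebra on these forms. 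I would use the independence characterization of Theorem~\ref{Zaslavsky extended lift matroid}(\ref{Zaslavsky extended lift matroid a}) for one direction and the circuit characterization of Theorem~\ref{Zaslavsky extended lift matroid}(\ref{Zaslavsky extended lift matroid b}) for the other.

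For the forward direction, suppose $ S $ is independent in $ M_{+}(\Gamma) $. If $ \infty \in S $, then $ S \setminus \{\infty\} $ is a forest and a standard induction on leaves shows that the forms $ x_{i}-x_{j}-g_{e}z $ for forest edges together with $ z $ are linearly independent. If $ \infty \notin S $, each connected component of $ S $ is either a tree or consists of a spanning tree together with exactly one cycle-closing edge producing an unbalanced cycle, and at most one component can be unbalanced by the independence criterion. Within each component the spanning-tree forms are linearly independent since their $ x $-parts are; adding the distinguished unbalanced cycle edge contributes a genuinely new direction by introducing a nonzero $ z $-coefficient that cannot be cancelled by the tree forms. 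Since the $ x $-variables in distinct components are disjoint, the entire family remains independent. For the converse, I would exhibit, for each circuit type in Theorem~\ref{Zaslavsky extended lift matroid}(\ref{Zaslavsky extended lift matroid b}), an explicit linear dependence. For a balanced cycle with oriented gain sum zero, the signed sum of edge forms around the cycle equals $ -(\sum g_{i}) z = 0 $. For a contrabalanced tight handcuff or two vertex-disjoint unbalanced cycles with gain sums $ h_{1}, h_{2} $, the combination $ h_{2} \sum_{e \in C_{1}} \alpha_{e} - h_{1} \sum_{e \in C_{2}} \alpha_{e} $ vanishes. For $ \{\infty\} $ together with an unbalanced cycle of gain sum $ h $, the cycle sum equals $ -h z = -h \alpha_{\infty} $. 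For a contrabalanced theta of three internally disjoint paths $ P_{1}, P_{2}, P_{3} $ from $ u $ to $ v $ with path gain-sums $ g_{1}, g_{2}, g_{3} $ (pairwise distinct since each pair forms an unbalanced cycle), the telescoping sums $ \sum_{e \in P_{i}} \alpha_{e} = x_{u}-x_{v}-g_{i}z $ all lie in the two-dimensional span of $ x_{u}-x_{v} $ and $ z $, so the three are linearly dependent, yielding a dependence on the individual edge forms. Minimality is confirmed in each case by deleting a single element and checking that the remainder satisfies the independence criterion already established.

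The main obstacle is the sign bookkeeping demanded by the equivalence $ \{i,j\}_{g} = \{j,i\}_{-g} $: one must fix a consistent orientation scheme on cycles, handcuffs, and thetas for the cancellations above to come out exactly. A secondary subtlety is noticing that a \emph{loose} handcuff, although dependent, is not a circuit of $ M_{+}(\Gamma) $ because it properly contains a vertex-disjoint pair of unbalanced cycles which is already a circuit; this distinguishes $ M_{+} $ from the frame matroid $ M_{\times} $ and must be accounted for when translating between the graphical combinatorics and the linear algebra. Once these points are handled, the calculations are routine and comparison with the explicit circuit list completes the identification.
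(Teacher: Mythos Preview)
The paper does not give a proof of this statement: it is quoted as a result of Zaslavsky \cite[Theorem~3.1(a)]{zaslavsky2003biased-joctsb}, with no argument supplied. So there is nothing in the paper to compare your proposal against.

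That said, your direct verification is sound and is essentially how such representation results are checked. A couple of small comments. First, in the case $\infty \notin S$ you correctly note that the independence criterion of Theorem~\ref{Zaslavsky extended lift matroid}(\ref{Zaslavsky extended lift matroid a}) allows at most one unbalanced cycle \emph{in total}, not one per component; your phrasing ``at most one component can be unbalanced'' captures this, but make sure the written argument does not drift into treating each component separately. The point that $z$ is not in the span of tree-edge forms (since their $x$-parts are independent) is what makes the extra edge in the unbalanced component genuinely new, and this should be stated explicitly. Second, your dependence for the contrabalanced theta is correct, and your observation that all three coefficients $c_i$ must be nonzero (else two of the $g_i$ coincide, contradicting contrabalancedness) is exactly what guarantees that the dependence involves every edge. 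Finally, the ``minimality'' check you mention is not needed for the matroid isomorphism: once you know that $M_{+}$-independent sets map to linearly independent families and $M_{+}$-circuits map to linearly dependent families, the bijection already preserves independence in both directions, since any $M_{+}$-dependent set contains a circuit.
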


Note that if $ \Gamma $ is a simple graph, then $ M_{+}(\Gamma) $ and $ \mathcal{A}_{+}(\Gamma) $ are the graphic matroid and arrangement with an extra element independent from the other elements.
Recall again that a subgraph isomorphic to a complete graph yields a round and modular flat.
The following propositions are generalizations for extended lift matroids.

\begin{Proposition}\label{extended lift round}
Suppose that $ G $ is a non-trivial finite group.
Then the extended lift matroid $ M_{+}\big(K_{n}^{G}\big) $ is round.
\end{Proposition}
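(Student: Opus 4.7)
The plan is to suppose for contradiction that the ground set $E \coloneqq E_{K_n^G} \cup \{\infty\}$ decomposes as $F_1 \cup F_2$ with $F_1, F_2$ proper flats, and reach a contradiction using the classification of flats in Theorem~\ref{Zaslavsky extended lift matroid}(\ref{Zaslavsky extended lift matroid c}). The cases $n \leq 1$ are trivial, since then $E = \{\infty\}$, so assume $n \geq 2$. After relabeling we may take $\infty \in F_1$, so that $F_1 = \{\infty\} \cup \bigcup_i E_{\Gamma[W_i]}$ for pairwise disjoint subsets $W_i \subseteq [n]$. The crucial observation is that for each pair $u \neq v$, either all $|G|$ edges $\{u,v\}_g$ lie in $F_1$ (precisely when $u$ and $v$ belong to a common $W_i$) or none of them do. Extending the family $\{W_i\}$ by singletons to a partition of $[n]$, let $\sim_1$ denote the corresponding equivalence relation; then $F_1$ is proper if and only if $\sim_1$ has at least two classes.

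Case (A): $\infty \in F_2$ as well. Define $\sim_2$ analogously from $F_2$. The covering condition $F_1 \cup F_2 = E$ forces $u \sim_1 v$ or $u \sim_2 v$ for every pair $u \neq v$. Pick a $\sim_1$-class $A \subsetneq [n]$ and set $B \coloneqq [n] \setminus A$; then $a \sim_2 b$ for every $a \in A$ and every $b \in B$. A short transitivity argument (fixing any $b_0 \in B$ and using $a \sim_2 b_0$ to bridge arbitrary pairs within $A$ and within $B$) collapses all of $[n]$ into a single $\sim_2$-class, so $F_2 = E$, contradicting the properness of $F_2$.

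Case (B): $\infty \notin F_2$. Then $F_2$ is balanced by Theorem~\ref{Zaslavsky extended lift matroid}(\ref{Zaslavsky extended lift matroid c})(i). Any two distinct edges $\{u,v\}_g, \{u,v\}_h$ with $g \neq h$ between the same pair of vertices form an unbalanced digon: under the convention $\{u,v\}_g = \{v,u\}_{g^{-1}}$ the product of gains around the $2$-cycle is $gh^{-1} \neq 1$. Hence at most one edge between each pair $u \neq v$ can lie in $F_2$, so using $|G| \geq 2$, at least one edge between $u$ and $v$ must belong to $F_1$; by the all-or-nothing property, all $|G|$ such edges are in $F_1$, i.e., $u \sim_1 v$. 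Since this holds for every pair, $\sim_1$ has a single class and $F_1 = E$, contradicting the properness of $F_1$.

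The main hurdle is organizing the case split around the two types of flats in Theorem~\ref{Zaslavsky extended lift matroid}(\ref{Zaslavsky extended lift matroid c}); the only genuinely computational step is the digon argument, which crucially exploits the assumption $|G| \geq 2$ to produce two parallel edges that a balanced $F_2$ cannot simultaneously absorb.
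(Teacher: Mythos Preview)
Your proof is correct and follows the same overall architecture as the paper's: the case split on whether $\infty \in F_2$, governed by the flat classification in Theorem~\ref{Zaslavsky extended lift matroid}(\ref{Zaslavsky extended lift matroid c}), and the use of unbalanced digons (which requires $|G|\ge 2$) to dispose of the balanced case. The one point of divergence is Case~(A). The paper handles it by restricting to the flat $Z$ spanned by the edges of $K_n^{\{1\}}$, observing that $M_+\big(K_n^G\big)|Z$ is the graphic matroid of $K_n$, and invoking the known roundness of that matroid; your argument instead unwinds this into a direct partition computation, showing that two equivalence relations whose union is the complete relation and one of which has at least two classes force the other to have a single class. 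Your route is self-contained and avoids the external reference, while the paper's is shorter once the roundness of complete-graph matroids is taken for granted; structurally they are the same statement. In Case~(B) your formulation (at most one edge per pair in $F_2$, hence all edges per pair in $F_1$) and the paper's (all cross-edges between two blocks of the $F_1$-partition land in $F_2$, making $F_2$ unbalanced) are contrapositives of one another.
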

\begin{proof}
When $ n=1 $, the assertion is trivial since $ M_{+}\big(K_{1}^{G}\big) = \{\infty\} $.
Hence we suppose that $ n \geq 2 $ and assume that there exist two proper flats $ X $ and $ Y $ such that $ X \cup Y = E_{\Gamma} \sqcup \{\infty\} $.
We may assume that $ X \ni \infty $.
Then there exist mutually disjoint subset $ W_{1}, \dots, W_{r} $ of $ V_{\Gamma} $ such that $ X \setminus \{\infty\} $ is the union of the edge sets of the induced subgraphs $ \Gamma[W_{1}], \dots, \Gamma[W_{r}] $ by Theorem~\ref{Zaslavsky extended lift matroid}(\ref{Zaslavsky extended lift matroid c}).
Note that $ r \geq 2 $ since $ X $ is a proper flat.

If $ Y \not\ni \infty $, then $ Y $ is balanced by Theorem~\ref{Zaslavsky extended lift matroid}(\ref{Zaslavsky extended lift matroid c}).
However, $ Y $ must contain all edges between a vertex in $ W_{1} $ and a vertex in~$ W_{2} $, and hence~$ Y $ is unbalanced since the gain group $ G $ is non-trivial, which is a contradiction.

Now suppose that $ Y \ni \infty $.
Let $ Z $ be the flat consisting of the edges of the subgraph $ K_{n}^{\{1\}} $.
Then we have $ Z = (X\cap Z) \cup (Y\cap Z) $.
This contradicts the roundness of the graphic matroid of the complete graph.
Hence we can conclude that the assertion holds.
\end{proof}

\begin{Proposition}\label{extended lift modular}
Let $ \Gamma $ be a loopless gain graph with a finite gain group $ G $.
Suppose that $ G $ has an induced subgraph isomorphic to $ K_{n}^{G} $.
Then the corresponding flat of $ M_{+}(\Gamma) $ is modular.
\end{Proposition}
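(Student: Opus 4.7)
The plan is to show that $X := \{\infty\}\cup E_{\Gamma[W]}$, which is a flat of $M_{+}(\Gamma)$ by Theorem~\ref{Zaslavsky extended lift matroid}(\ref{Zaslavsky extended lift matroid c})(ii), is modular by verifying the modular short-circuit axiom (Theorem~\ref{Brylawsky modular short-circuit axiom}), adapting the strategy of Proposition~\ref{frame modular} to the extended lift setting. Given a circuit $C$ of $M_{+}(\Gamma)$ and an atom $e\in C\setminus X$, note that $e\neq\infty$, and we may assume $C\cap X\neq\varnothing$; the task is to produce $x\in X$ and a circuit $C'$ with $e\in C'\subseteq\{x\}\cup(C\setminus X)$. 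The argument splits according to whether $\infty\in C$.

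If $\infty\in C$, then by Theorem~\ref{Zaslavsky extended lift matroid}(\ref{Zaslavsky extended lift matroid b}) we have $C=\{\infty\}\cup D$ for some unbalanced cycle $D$, and $e$ lies on a maximal arc $P$ of $D\setminus X$ whose endpoints $u_{1},u_{2}\in W$ are the endpoints of removed $X$-edges of $D$. Since $\Gamma[W]\simeq K_{n}^{G}$ contains an edge $\{u_{1},u_{2}\}_{g}$ for every $g\in G$, we may choose $g_{P}$ so that $x:=\{u_{1},u_{2}\}_{g_{P}}\in X$ together with $P$ forms a balanced cycle, and then $C':=\{x\}\cup P$ is the required circuit.

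If $\infty\notin C$, let $S$ be the connected component of $C\setminus X$, viewed as a subgraph of $\Gamma$, that contains $e$. Then $S\subsetneq C$ is independent, so by Theorem~\ref{Zaslavsky extended lift matroid}(\ref{Zaslavsky extended lift matroid a}) $S$ has no balanced cycle and at most one unbalanced cycle; moreover, since every circuit of the forms (b1)--(b4) in Theorem~\ref{Zaslavsky extended lift matroid}(\ref{Zaslavsky extended lift matroid b}) has minimum vertex-degree $\geq 2$, each leaf of $S$ must be incident to some $X$-edge of $C$ and hence lies in $W$. Three subcases arise: (i) if $S$ has an unbalanced cycle $D$ with $e\in D$, then $x=\infty$ and $C'=\{\infty\}\cup D$ work; (ii) if $S$ has no unbalanced cycle, then $S$ is a tree and $e$ lies on the unique $S$-path between two leaves $\ell_{1},\ell_{2}\in W$, which we close with $x=\{\ell_{1},\ell_{2}\}_{g_{P}}\in E_{\Gamma[W]}$ to obtain a balanced cycle; (iii) if $S$ has an unbalanced cycle $D$ with $e\notin D$, we find leaves $\ell_{1},\ell_{2}\in W$ of $S$ whose $S$-path contains $e$ and close exactly as in (ii).

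The main obstacle is subcase (iii): one must verify that two such $W$-leaves exist. This reduces to a case-by-case inspection of circuit types (b2)--(b4): because every $X$-edge has both endpoints in $W$ and because $C$ has no graph-leaves, the pendant parts of $S$ emerging from the cycle $D$ always terminate at $W$-vertices, and combinatorial constraints force at least two such terminations whenever $e$ lies off the cycle. For example, in a tight handcuff $D_{1}\cup D_{2}$ sharing a vertex $w\notin W$, neither of the two edges of $D_{2}$ at $w$ can be an $X$-edge, so $S$ sprouts two branches from $w$ that run along $D_{2}\setminus X$ until they reach endpoints of $X$-edges, which by definition lie in $W$. Whereas Proposition~\ref{frame modular} used the loops at $W$-vertices in $\mathring{K}_{n}^{G}$ to splice an unbalanced cycle into a handcuff circuit, here the element $\infty$ takes that role in subcase~(i), and the availability of edges $\{u,v\}_{g}$ for every $g\in G$ in $K_{n}^{G}$ provides the balanced closures required in subcases~(ii) and~(iii).
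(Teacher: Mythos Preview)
Your approach is correct in outline and uses the same tool as the paper (the modular short-circuit axiom, Theorem~\ref{Brylawsky modular short-circuit axiom}), but the case analysis you import from Proposition~\ref{frame modular} is heavier than necessary and has a couple of rough spots.

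The paper's argument is shorter because it does not split on whether $\infty\in C$ and does not analyze the connected component $S$. It observes instead that, since $\infty\in X$, one always has $\infty\notin C\setminus X$, and then distinguishes only two cases: either $C\setminus X$ contains an unbalanced cycle through $e$ (then $\{\infty\}$ together with that cycle is the desired circuit), or it does not. In the second case one uses that every edge of a circuit of type balanced cycle, contrabalanced tight handcuff, contrabalanced theta, disjoint pair of unbalanced cycles, or $\{\infty\}\cup D$ lies on some cycle of $C$; that cycle necessarily meets $X$, so walking from $e$ in both directions along it until the first $X$-edge produces a path in $C\setminus X$ with both ends in $W$, which one closes to a balanced cycle. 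This handles your Case~A and your subcases (ii), (iii) in one stroke.

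Two small gaps in your version: in Case~A you tacitly assume $D\cap X\neq\varnothing$ when you speak of ``removed $X$-edges of $D$''; if $D\cap X=\varnothing$ (which is allowed, since already $\infty\in C\cap X$), then $D\subseteq C\setminus X$ is an unbalanced cycle through $e$ and $C'=C$ with $x=\infty$ works. In subcase~(iii), insisting on two $W$-\emph{leaves} of $S$ is too strong: if, say, $C=D_{1}\cup D_{2}$ is a tight handcuff with $D_{1}\subseteq S$, the shared vertex $w\in W$, and exactly one $D_{2}$-edge at $w$ lies in $X$, then $S$ has only one leaf, yet the needed path runs from that leaf through $e$ to $w\in W$, which is not a leaf. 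What you actually need is two $W$-\emph{vertices} in $S$ with an $S$-path through $e$, and your case inspection establishes that once this is corrected. The paper's cycle-based extraction sidesteps this entirely.
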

\begin{proof}
Let $ X $ denote the corresponding flat and we show $ X $ is modular by using Theorem~\ref{Brylawsky modular short-circuit axiom}.
Let $ C $ be a circuit and $ e \in C \setminus X $.
We may assume that $ C \cap X \neq \varnothing $.
Then $ C\setminus X \not\ni \infty $ and it is independent has no balanced cycle and contains at most one unbalanced cycle by Theorem~\ref{Zaslavsky extended lift matroid}(\ref{Zaslavsky extended lift matroid a}).

Assume that $ C\setminus X $ has an unbalanced cycle containing $ e $.
Then the unbalanced cycle and $ \infty $ form a circuit by Theorem~\ref{Zaslavsky extended lift matroid}(\ref{Zaslavsky extended lift matroid b}), which is a desired circuit.

Now suppose that there exists no unbalanced cycle containing $ e $.
By Theorem~\ref{Zaslavsky extended lift matroid}(\ref{Zaslavsky extended lift matroid b}), there exists a cycle in $ C $ containing $ e $.
Therefore we can find a path in $ C\setminus X $ containing $ e $ whose endvertices belong to the subgraph~$ K_{n}^{G} $
Choose a suitable edge between the endvertices, and we obtain a balanced cycle, which is a desired circuit.
Thus we can conclude that the assertion holds true.
\end{proof}

We introduce link-simplicial vertices which are another generalization of simplicial vertices of simple graphs and are fit to loopless gain graphs and extended lift matroids.
\begin{Definition}
A vertex $ v $ in a loopless gain graph $ \Gamma $ is called \textit{link simplicial} if the following condition holds:
\begin{itemize}\itemsep=0pt
\item If $ \{u,v\}_{g}, \{v,w\}_{h} \in E_{\Gamma} $, then $ \{u,w\}_{gh} \in E_{\Gamma} $.
\end{itemize}
\end{Definition}

Zaslavsky characterized modular coatoms of extended lift matroids.
We excerpt from the theorem.
\begin{Theorem}[{Zaslavsky \cite[Theorem~3.1]{zaslavsky2001supersolvable-ejoc}}]\label{Zaslavsky link simplicial}
Let $ \Gamma $ be a loopless gain graph and~$ v $ a link-simplicial vertex.
Then the flat of $ M_{+}(\Gamma) $ corresponding to the induced subgraph $ \Gamma \setminus \{v\} $ is modular.
\end{Theorem}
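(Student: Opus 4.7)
The plan is to apply the modular short-circuit axiom (Theorem~\ref{Brylawsky modular short-circuit axiom}) to the candidate flat
$ X \coloneqq E_{\Gamma[V_{\Gamma}\setminus\{v\}]} \cup \{\infty\} $,
which is indeed the flat of $ M_{+}(\Gamma) $ corresponding to $ \Gamma \setminus v $ by Theorem~\ref{Zaslavsky extended lift matroid}(\ref{Zaslavsky extended lift matroid c})(ii) (with $ r = 1 $ and $ W_{1} = V_{\Gamma} \setminus \{v\} $).

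Given any circuit $ C $ of $ M_{+}(\Gamma) $ and any atom $ e \in C \setminus X $, since $ \infty \in X $ the element $ e $ is an ordinary edge of $ \Gamma $, and since $ e \notin X $ it must be incident to $ v $; write $ e = \{v,u\}_{g} $. A short case analysis on the list of circuit types in Theorem~\ref{Zaslavsky extended lift matroid}(\ref{Zaslavsky extended lift matroid b}) shows that $ v $ lies on some cycle of length at least $ 2 $ contained in $ C $ (for a theta one uses the cycle formed by a pair of its three paths, and in the $\{\infty\}$-with-unbalanced-cycle case one uses the given cycle), so $ C $ contains a second $ v $-edge $ e' = \{v,w\}_{h} $ distinct from $ e $.

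The construction then splits according to whether $ u \neq w $ or $ u = w $. In the generic case $ u \neq w $, I would invoke the link-simplicial property at $ v $ on the pair $ e, e' $ to produce $ x \coloneqq \{u,w\}_{g^{-1}h} \in E_{\Gamma} $, which lies in $ X $ because it avoids $ v $; the triple $ \{e, e', x\} $ is then a balanced triangle (the gain product along $ u \to v \to w \to u $ telescopes to $ 1 $), hence a circuit by Theorem~\ref{Zaslavsky extended lift matroid}(\ref{Zaslavsky extended lift matroid b}), and this certifies the axiom for this case.

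The main obstacle will be the degenerate case $ u = w $, where the would-be third edge of the triangle is a loop and therefore unavailable in the loopless graph $ \Gamma $. Here the extra atom $ \infty $ of the extended lift matroid rescues the argument: since $ g \neq h $ forces $ e, e' $ to form an unbalanced $ 2 $-cycle, Theorem~\ref{Zaslavsky extended lift matroid}(\ref{Zaslavsky extended lift matroid b}) makes $ \{e, e', \infty\} $ a circuit, so $ x \coloneqq \infty \in X $ does the job. Once both cases are handled, Theorem~\ref{Brylawsky modular short-circuit axiom} delivers the modularity of $ X $.
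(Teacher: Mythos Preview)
Your argument is correct. The paper does not supply its own proof of Theorem~\ref{Zaslavsky link simplicial}; it is quoted as an excerpt from Zaslavsky's classification \cite[Theorem~3.1]{zaslavsky2001supersolvable-ejoc} and used as a black box. So there is no ``paper's proof'' to compare against here.

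That said, your approach via Theorem~\ref{Brylawsky modular short-circuit axiom} is very much in the spirit of the paper's neighbouring arguments (Propositions~\ref{frame modular} and~\ref{extended lift modular}), which also verify modularity by the short-circuit axiom. Your proof is in fact tighter than those, because the link-simplicial hypothesis lets you short-circuit down to a $3$-element circuit immediately rather than tracing paths through a connected component of $C\setminus X$. Two small remarks that would make the write-up airtight: first, the observation that every circuit type in Theorem~\ref{Zaslavsky extended lift matroid}(\ref{Zaslavsky extended lift matroid b}) gives $v$ degree at least~$2$ in $C\cap E_{\Gamma}$ (loops being excluded) is really all you need to produce $e'$, and is slightly cleaner than invoking a cycle through~$v$; second, in the degenerate case $u=w$ you might note explicitly that $e\neq e'$ forces $g\neq h$ and hence the $2$-cycle $\{e,e'\}$ is genuinely unbalanced, so that Theorem~\ref{Zaslavsky extended lift matroid}(\ref{Zaslavsky extended lift matroid b}) applies. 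With these in place the verification of $C'\subseteq\{x\}\cup(C\setminus X)$ is immediate, since both $e$ and $e'$ are incident to $v$ and hence lie in $C\setminus X$.
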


From this theorem, one can show that the extended lift matroid $ M_{+}\big(K_{n}^{G}\big) $ is supersolvable for any finite group $ G $.

Zaslavsky \cite[Theorem~3.2]{zaslavsky2001supersolvable-ejoc} also characterized supersolvability of extended lift matroids as the minimal class satisfying the conditions (i) and (ii) in the following theorem.
We can consider the additional condition (iii) for modular extendedness.

\begin{Theorem}\label{main extended lift}
Let $ G $ be a finite subgroup of the additive group of a field $ \mathbb{K} $ and $ \mathcal{C}^{G}_{+} $ the minimal class of loopless gain graphs with gain group $ G $ which satisfies the following conditions.
\begin{enumerate}[$(i)$]\itemsep=0pt
\item The null graph is a member of $ \mathcal{C}^{G}_{+} $.
\item If $ \Gamma $ has a link-simplicial vertex $ v $ and $ \Gamma \setminus v \in \mathcal{C}^{G}_{+}$, then $ \Gamma \in \mathcal{C}^{G}_{+} $.
\item If there exists a decomposition $ V_{\Gamma} = V_{1} \cup V_{2} $ such that $ \Gamma[V_{1}], \Gamma[V_{2}] \in \mathcal{C}^{G}_{+}, \ E_{\Gamma} = E_{\Gamma[V_{1}]} \cup E_{\Gamma[V_{2}]} $, and $ \Gamma[V_{1} \cap V_{2}] \simeq K_{n}^{G} $ for some $ n $, then $ \Gamma \in \mathcal{C}^{G}_{+} $.
\end{enumerate}
Then for every $ \Gamma \in \mathcal{C}^{G}_{+} $ the corresponding arrangement $ \mathcal{A}_{+}(\Gamma) $ is divisionally free.
\end{Theorem}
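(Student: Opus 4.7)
The plan is to mirror the proof of Theorem~\ref{main frame}: we argue by induction on the inductive construction of $\mathcal{C}^G_+$ that every $\Gamma \in \mathcal{C}^G_+$ has $M_+(\Gamma) \in \mathcal{ME}$. Once this is established, Theorem~\ref{Zaslavsky extended lift arrangement} identifies $M(\mathcal{A}_+(\Gamma))$ with $M_+(\Gamma)$, and Theorem~\ref{main divisionally free} yields divisional freeness of $\mathcal{A}_+(\Gamma)$.

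The three inductive cases are handled as follows. For case~(i), the null graph produces a matroid on the singleton $\{\infty\}$, which lies in $\mathcal{ME}$ by condition~(ii) of Definition~\ref{modularly extended matroid} with the empty set as its modular coatom. For case~(ii), suppose $v$ is link-simplicial and $\Gamma \setminus v \in \mathcal{C}^G_+$. By Theorem~\ref{Zaslavsky link simplicial} the flat $X$ of $M_+(\Gamma)$ corresponding to the induced subgraph $\Gamma \setminus v$ is modular; a short rank computation (using that link-simpliciality collapses all edges at $v$ to a single new rank) shows that $X$ is a coatom, unless $v$ is isolated, in which case $M_+(\Gamma) = M_+(\Gamma \setminus v)$ outright. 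Since $M_+(\Gamma)|X \simeq M_+(\Gamma \setminus v) \in \mathcal{ME}$ by the induction hypothesis, condition~(ii) of Definition~\ref{modularly extended matroid} gives $M_+(\Gamma) \in \mathcal{ME}$. For case~(iii), set $E_i \coloneqq E_{\Gamma[V_i]} \cup \{\infty\}$ and $X \coloneqq E_1 \cap E_2 = E_{\Gamma[V_1 \cap V_2]} \cup \{\infty\}$. By Proposition~\ref{extended lift modular}, $X$ is a modular flat of $M_+(\Gamma)$, and by Proposition~\ref{extended lift round} it is round whenever $|G| \geq 2$. Together with the inductive hypothesis $M_+(\Gamma)|E_i \simeq M_+(\Gamma[V_i]) \in \mathcal{ME}$, this realizes $M_+(\Gamma) = P_X(M_+(\Gamma[V_1]), M_+(\Gamma[V_2]))$ as a modular join over a round flat, so $M_+(\Gamma) \in \mathcal{ME}$ by condition~(iii) of Definition~\ref{modularly extended matroid}.

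The main obstacle lies in case~(iii): verifying that $E_1$ and $E_2$ (not merely their intersection $X$) are modular flats of $M_+(\Gamma)$, which is required to realize the modular join structure. I~would carry this out via Theorem~\ref{Brylawsky modular short-circuit axiom}: given a circuit $C$ of $M_+(\Gamma)$ with an atom $e \in C \setminus E_i$, the circuit classification in Theorem~\ref{Zaslavsky extended lift matroid}(\ref{Zaslavsky extended lift matroid b})---balanced cycles, contrabalanced tight handcuffs, contrabalanced thetas, unions of two vertex-disjoint unbalanced cycles, and unions of $\{\infty\}$ with an unbalanced cycle---allows a case-by-case rerouting of $C$ through a vertex of $V_1 \cap V_2$ using a suitably chosen edge of $K_n^G$, producing the required shortened circuit, analogously to but more intricately than the argument given in Proposition~\ref{extended lift modular}. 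A secondary technical point is the degenerate situation $|G|=1$, where $\infty$ becomes a coloop and $M_+(\Gamma) = M(\Gamma) \oplus \{\infty\}$; here Proposition~\ref{extended lift round} does not apply, but the theorem reduces to Dirac's characterization of chordal graphs combined with Stanley's theorem, so divisional freeness follows directly.
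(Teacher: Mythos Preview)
Your proposal is correct and follows the same approach as the paper's one-line proof: show $M_+(\Gamma)\in\mathcal{ME}$ by induction on the construction of $\mathcal{C}^G_+$ using Propositions~\ref{extended lift round} and~\ref{extended lift modular} and Theorem~\ref{Zaslavsky link simplicial}, then apply Theorems~\ref{Zaslavsky extended lift arrangement} and~\ref{main divisionally free}. You are in fact more careful than the paper, which leaves implicit both the verification that the $E_i$ (not merely $X$) are modular flats of $M_+(\Gamma)$ and the degenerate case $|G|=1$ where Proposition~\ref{extended lift round} does not apply; your proposed handling of both points is sound.
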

\begin{proof}
The extended lift matroid $ M_{+}(\Gamma) $ is modularly extended by Propositions~\ref{extended lift round},~\ref{extended lift modular}, and Theorem~\ref{Zaslavsky link simplicial}.
Using Theorems~\ref{main divisionally free} and~\ref{Zaslavsky extended lift arrangement}, we can conclude that~$ \mathcal{A}_{+}(\Gamma) $ is divisionally free.
\end{proof}

\begin{Example}Let $ \bowtie $ be a signed graph described in Fig.~\ref{Fig:bowtie}.
Here we regard the gain group~$ \{\pm 1\} $ the additive group of $ \mathbb{F}_{2} $.
Then $ {\bowtie} \in \mathcal{C}^{\mathbb{F}_{2}}_{+} $ and the arrangement $ \mathcal{A}_{+}(\bowtie) $ consists of the following $ 9 $ hyperplanes
\begin{gather*}
\{z=0\},\, \{x_{1}+z=0\}, \,\{x_{2}+z=0\}, \,\{x_{1}+x_{2}=0\}, \,\{x_{1}+x_{2}=z\}, \\
\{y_{1}+z=0\}, \,\{y_{2}+z=0\},\, \{y_{1}+y_{2}=0\},\, \{y_{1}+y_{2}=z\}.
\end{gather*}
Since $ \bowtie $ has no link-simplicial vertices, $ \mathcal{A}_{+}(\bowtie) $ is not supersolvable but divisionally free.
\end{Example}

\begin{Remark}Theorem \ref{main extended lift} requires that the gain group $ G $ is finite. Hence we cannot say anything about the extended Catalan and Shi arrangements, where the gain group is $ \mathbb{Z} $, although they are free.
\end{Remark}

\subsection{Arrangements over finite fields}
Free arrangements over finite fields are investigated by Ziegler~\cite{ziegler1990matroid-totams} and Yoshinaga~\cite{yoshinaga2007free-potjasams}. Recently, Palezzato and Torielli~\cite{palezzato2019free-joac} show relations between freeness of arrangements over $ \mathbb{Q} $ and freeness of arrangements over finite fields.

Let $ \mathcal{A}(n,q) $ denote the hyperplane arrangement consisting of all hyperplanes in $ n $-dimensional vector space over the finite field $ \mathbb{F}_{q} $. The linear dependence matroid of $ \mathcal{A}(n,q) $ is known as the projective geometry $ \PG(n-1,q) $. Since the lattice $ L(\PG(n-1,q)) $ is the lattice of subspaces of the vector space $ \mathbb{F}_{q}^{n} $, every flat of $ \PG(n-1,q) $ is modular. Therefore $ \PG(n-1,q) $ and $ \mathcal{A}(n,q) $ are supersolvable.

\begin{Proposition}\label{PG round}
The linear dependence matroid on $ \mathcal{A}(n,q) $, that is, the projective geometry $ \PG(n-1,q) $ is round.
\end{Proposition}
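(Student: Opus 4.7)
The plan is to translate roundness of $\PG(n-1,q)$ into a statement about subspaces of $\mathbb{F}_q^n$ and then invoke the classical fact that no vector space is the union of two proper subspaces.

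First I would recall the identification: the atoms of $\PG(n-1,q)$ are the $1$-dimensional subspaces of $\mathbb{F}_q^n$, and the lattice of flats is naturally isomorphic to the lattice of all linear subspaces of $\mathbb{F}_q^n$ ordered by inclusion. Under this correspondence a flat $F$ consists of those $1$-dimensional subspaces contained in the associated linear subspace $W_F \subseteq \mathbb{F}_q^n$, and $F$ is proper precisely when $W_F \subsetneq \mathbb{F}_q^n$. Thus, assuming for contradiction that the ground set of $\PG(n-1,q)$ is the union of two proper flats $F_1,F_2$, every $1$-dimensional subspace of $\mathbb{F}_q^n$ lies in $W_{F_1}$ or $W_{F_2}$, and consequently
\begin{gather*}
\mathbb{F}_q^n = W_{F_1} \cup W_{F_2},
\end{gather*}
with both $W_{F_i}$ proper.

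Next I would rule this out by the standard two-subspace argument. Choose $v_1 \in W_{F_1}\setminus W_{F_2}$ and $v_2 \in W_{F_2}\setminus W_{F_1}$; both exist since each $W_{F_i}$ is proper and their union is all of $\mathbb{F}_q^n$. The vector $v_1+v_2$ lies in $W_{F_1}$ or in $W_{F_2}$. In the first case $v_2 = (v_1+v_2)-v_1 \in W_{F_1}$, contradicting the choice of $v_2$; the second case is symmetric. Hence no such decomposition exists, and $\PG(n-1,q)$ is round.

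I expect no serious obstacle: the only subtlety is being careful that ``proper flat of $\PG(n-1,q)$'' corresponds to ``proper linear subspace of $\mathbb{F}_q^n$'' (the full ground set corresponds to $\mathbb{F}_q^n$ itself, and the empty flat to the zero subspace), which is immediate from the description of the lattice of flats. Everything else is the one-line linear-algebra lemma.
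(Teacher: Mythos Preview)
Your argument is correct and follows essentially the same route as the paper: translate flats of $\PG(n-1,q)$ into linear subspaces of $\mathbb{F}_q^n$ and then use that a vector space is never the union of two proper subspaces. The only difference is cosmetic---the paper simply asserts this linear-algebra fact (concluding $V_1\subseteq V_2$ or $V_2\subseteq V_1$), whereas you spell out the standard $v_1+v_2$ argument.
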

\begin{proof}Assume that the ground set of $ \PG(n-1,q) $ is written as the union of two flats~$ F_{1}$,~$F_{2} $.
Let $ V_{1} $ and $ V_{2} $ be the subspaces of $ \mathbb{F}_{q}^{n} $ corresponding to the flats $ F_{1} $ and $ F_{2} $.
Then $ \mathbb{F}_{q}^{n} = V_{1} \cup V_{2} $. This yields $ V_{1} \subseteq V_{2} $ or $ V_{2} \subseteq V_{1} $. Thus $ \PG(n-1,q) $ is round.
\end{proof}

\begin{Proposition}[Oxley {\cite[Corollary~6.9.6]{oxley2011matroid}}]\label{PG modular}
Let $ M $ be a simple matroid representable over~$ \mathbb{F}_{q} $.
Suppose that $ X $ is a flat of $ M $ such that $ M|X $ is isomorphic to the projective geometry $ \PG(n,q) $.
Then $ X $ is modular.
\end{Proposition}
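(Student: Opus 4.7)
The plan is to work with a concrete linear representation of $M$ over $\mathbb{F}_{q}$ and translate the modular identity into a standard dimension formula for subspaces. Fix an embedding of $M$ into a vector matroid, so that each atom of $M$ corresponds to a one-dimensional subspace of some $\mathbb{F}_{q}^{d}$, and to every flat $F$ of $M$ associate the subspace $V_{F} \subseteq \mathbb{F}_{q}^{d}$ spanned by the atoms in $F$; then $r(F) = \dim V_{F}$. The hypothesis $M|X \simeq \PG(n,q)$ is what makes the geometry on $V_{X}$ tight: every one-dimensional subspace of $V_{X}$ is realized as an atom of $M$ and lies in $X$.

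Next, I would take an arbitrary flat $Y$ of $M$ and verify the equality $r(X)+r(Y)=r(X\wedge Y)+r(X\vee Y)$ using the representation. The join side is routine: $V_{X\vee Y}=V_{X}+V_{Y}$, so $r(X\vee Y)=\dim(V_{X}+V_{Y})$. For the meet, I would argue that an atom of $M$ lies in $X\wedge Y$ precisely when the corresponding line sits in $V_{X}\cap V_{Y}$; the containment $\subseteq$ is immediate, while for $\supseteq$ one uses the crucial fact that $X$, being isomorphic to $\PG(n,q)$, contains \emph{every} line of $V_{X}$ as an atom, so every line in $V_{X}\cap V_{Y}$ is an atom of $M$ belonging to both $X$ and $Y$. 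Consequently $X\wedge Y$ is the full set of lines in $V_{X}\cap V_{Y}$ and $r(X\wedge Y)=\dim(V_{X}\cap V_{Y})$.

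Combining these with the subspace dimension formula
\begin{gather*}
\dim V_{X}+\dim V_{Y}=\dim(V_{X}+V_{Y})+\dim(V_{X}\cap V_{Y})
\end{gather*}
yields $r(X)+r(Y)=r(X\vee Y)+r(X\wedge Y)$, which is exactly the definition of modularity of $X$ in $L(M)$.

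The only genuinely nontrivial point is the reverse inclusion in the meet computation: absent the $\PG(n,q)$ hypothesis, $V_{X}\cap V_{Y}$ could contain one-dimensional subspaces that are not atoms of $M$, and the meet $X\wedge Y$ in $L(M)$ would then be a proper sub-flat of rank strictly less than $\dim(V_{X}\cap V_{Y})$, breaking the dimension count. So the main obstacle, such as it is, is to isolate and record this fact cleanly; an alternative route that sidesteps the computation is to verify Brylawski's modular short-circuit axiom (Theorem~\ref{Brylawsky modular short-circuit axiom}) directly, exploiting that any two distinct atoms inside $V_{X}$ together with any third line in their span form a circuit of $M$ entirely inside $X$ together with the arbitrary atom.
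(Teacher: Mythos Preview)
The paper does not supply its own proof of this proposition; it is quoted without argument from Oxley's textbook. So there is nothing in the paper to compare your proof against directly.

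That said, your argument is correct and is essentially the standard one. The key points are all in order: for any flat $F$ one has $r(F)=\dim V_{F}$; the join satisfies $V_{X\vee Y}=V_{X}+V_{Y}$ because closure in a vector matroid is exactly linear span intersected with the ground set; and the only place where the $\PG(n,q)$ hypothesis is used is to guarantee that every one-dimensional subspace of $V_{X}$ (hence of $V_{X}\cap V_{Y}$) already occurs as an atom of $M$, so that $V_{X\wedge Y}=V_{X}\cap V_{Y}$ rather than a possibly smaller subspace. The subspace dimension formula then finishes the job. Your closing remark correctly identifies the one genuine step and why it fails without the hypothesis; the alternative via Theorem~\ref{Brylawsky modular short-circuit axiom} that you sketch would also work but is less direct here.
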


\begin{Theorem}\label{main finite field}
Let $ \mathcal{F}_{q} $ be the minimal class consisting of simple matroids representable over~$ \mathbb{F}_{q} $ satisfying the following conditions.
\begin{enumerate}[$(i)$]\itemsep=0pt
\item Every supersolvable matroids over $ \mathbb{F}_{q} $ belongs to $ \mathcal{F}_{q} $.
\item $ \mathcal{F}_{q} $ is closed under taking modular joins over the projective geometry $ \PG(n,q) $.
\end{enumerate}
If the linear dependence matroid of an arrangement $ \mathcal{A} $ over $ \mathbb{F}_{q} $ belongs to $ \mathcal{F}_{q} $, then $ \mathcal{A} $ is divisionally free.
\end{Theorem}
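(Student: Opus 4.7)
The plan is to reduce this statement directly to Theorem~\ref{main divisionally free} by showing that $\mathcal{F}_{q} \subseteq \mathcal{ME}$. Once that containment is established, any arrangement $\mathcal{A}$ over $\mathbb{F}_{q}$ whose linear dependence matroid lies in $\mathcal{F}_{q}$ automatically has $M(\mathcal{A}) \in \mathcal{ME}$, and divisional freeness follows.

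To prove $\mathcal{F}_{q} \subseteq \mathcal{ME}$, I would induct on the construction of $\mathcal{F}_{q}$. For the base case, every supersolvable simple matroid already belongs to $\mathcal{ME}$, since supersolvability is witnessed by a saturated chain of modular flats and such a chain only uses clauses (i) and (ii) of Definition~\ref{modularly extended matroid}; this observation is recorded in the paragraph preceding Theorem~\ref{main ss}. For the inductive step, suppose $M = P_{X}(M_{1},M_{2})$ is a modular join with $X \simeq \PG(n,q)$ and $M_{1},M_{2} \in \mathcal{F}_{q}$ already known to lie in $\mathcal{ME}$. Proposition~\ref{PG round} tells us that every projective geometry is a round matroid, so $X$ is a round flat. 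Hence clause (iii) of Definition~\ref{modularly extended matroid} applies and $M \in \mathcal{ME}$. This closes the induction.

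Combining the two steps, $\mathcal{F}_{q} \subseteq \mathcal{ME}$. If $M(\mathcal{A}) \in \mathcal{F}_{q}$, then $M(\mathcal{A}) \in \mathcal{ME}$, and Theorem~\ref{main divisionally free} gives that $\mathcal{A}$ is divisionally free.

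Honestly, there is no real obstacle in this argument: the theorem is essentially a specialization of Theorem~\ref{main divisionally free} once one matches the two defining clauses of $\mathcal{F}_{q}$ against the clauses of $\mathcal{ME}$. The only conceptual content is the observation that $\PG(n,q)$ is round (Proposition~\ref{PG round}), which is precisely what is required to turn a modular join over a projective geometry into a modular join over a round flat in the sense of Definition~\ref{modularly extended matroid}. Modularity of $X$ in $M_{1}$ and $M_{2}$ is built into the notion of modular join, so no extra verification (e.g., via Proposition~\ref{PG modular}) is strictly needed here.
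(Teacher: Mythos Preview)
Your argument is correct and essentially matches the paper's: both show $\mathcal{F}_{q} \subseteq \mathcal{ME}$ using the roundness of projective geometries (Proposition~\ref{PG round}) and then apply Theorem~\ref{main divisionally free}. The only cosmetic difference is that the paper invokes the reformulation in Theorem~\ref{main ss} rather than inducting directly on Definition~\ref{modularly extended matroid}, and it also cites Proposition~\ref{PG modular}; your observation that the latter is not strictly needed (modularity being built into the phrase ``modular join'') is fair.
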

\begin{proof}It follows that $ \mathcal{F}_{q} $ is a subclass of $ \mathcal{ME} $ by Propositions~\ref{PG round},~\ref{PG modular}, and Theorem \ref{main ss}.
Therefore, by Theorem \ref{main divisionally free}, every arrangement in $ \mathcal{F}_{q} $ is divisionally free.
\end{proof}

\begin{Example}
Ziegler \cite[Example 4.3]{ziegler1991binary-potams} constructed a binary matroid which is not supersolvable but obtained by taking a modular join of supersolvable matroids.
We show that the binary arrangement corresponding to the matroid is divisionally free.
The arrangement $ \mathcal{A} $ is constructed as follows.
Let $ \mathcal{A}_{1} $ be an arrangement of rank $ 4 $ in $ \mathbb{F}_{2}^{4} $ consisting of the following $ 11 $ hyperplanes
\begin{gather*}
 \{z_{1}=0\},\, \{z_{2}=0\}, \,\{z_{1}+z_{2}=0\}, \\
 \{x_{1}=0\}, \,\{x_{2}=0\}, \,\{x_{1}+x_{2}=0\},\, \{x_{1}+z_{1}=0\}, \,\{x_{2}+z_{1}=0\}, \\
 \{x_{1}+x_{2}+z_{1}=0\}, \,\{x_{1}+z_{1}+z_{2}=0\},\, \{x_{2}+z_{1}+z_{2}=0\}.
\end{gather*}
Then we can prove that
\begin{gather*}
\{x_{1}=0\} \subseteq \{x_{1}=x_{2}=0\} \subseteq \{x_{1}=x_{2}=z_{1}=0\} \subseteq \{x_{1}=x_{2}=z_{1}=z_{2}=0\}
\end{gather*}
is a saturated chain consisting of modular flats in $ L(\mathcal{A}_{1}) $ by Theorem~\ref{Brylawski modular coatom} and Proposition~\ref{Brylawski modular tower}.
Therefore $ \mathcal{A}_{1} $ is supersolvable (see also \cite[Corollary~2.17]{terao1986modular-aim} and \cite[Theorem~4.3]{bjorner1990hyperplane-dcg}).
Let $ \mathcal{A}_{2} $ be an isomorphic copy of $ \mathcal{A}_{1} $ and $ \mathcal{A} $ the modular join of $ \mathcal{A}_{1} $ and $ \mathcal{A}_{2} $ over $ \{\{z_{1}=0\}, \{z_{2}=0\}, \{z_{1}+z_{2}=0\} \} \simeq \PG(1,2) $, that is, $ \mathcal{A} $ consists of the following $ 19 $ hyperplanes
\begin{gather*}
 \{z_{1}=0\}, \,\{z_{2}=0\}, \,\{z_{1}+z_{2}=0\}, \\
 \{x_{1}=0\}, \,\{x_{2}=0\},\, \{x_{1}+x_{2}=0\}, \,\{x_{1}+z_{1}=0\}, \,\{x_{2}+z_{1}=0\}, \\
 \{x_{1}+x_{2}+z_{1}=0\}, \,\{x_{1}+z_{1}+z_{2}=0\},\, \{x_{2}+z_{1}+z_{2}=0\}, \\
 \{y_{1}=0\}, \{y_{2}=0\}, \,\{y_{1}+y_{2}=0\}, \,\{y_{1}+z_{1}=0\},\, \{y_{2}+z_{1}=0\}, \\
 \{y_{1}+y_{2}+z_{1}=0\}, \,\{y_{1}+z_{1}+z_{2}=0\}, \,\{y_{2}+z_{1}+z_{2}=0\}.
\end{gather*}
By Theorem \ref{main finite field}, $ \mathcal{A} $ is divisionally free.
According to Ziegler \cite[Example~4.3]{ziegler1991binary-potams}, $ \mathcal{A} $ is not supersolvable.
\end{Example}

\subsection*{Acknowledgments}
I greatly appreciate N.~Nakashima, D.~Suyama, and M.~Torielli for valuable discussions, which provide a basis of Section~\ref{sec:applications}. I~also owe my deepest gratitude to the anonymous referees whose comments are very helpful to polish the paper.

\pdfbookmark[1]{References}{ref}
\LastPageEnding


\begin{thebibliography}{99}
\footnotesize\itemsep=0pt

\bibitem{abe2016divisionally-im}
Abe T., Divisionally free arrangements of hyperplanes, \href{https://doi.org/10.1007/s00222-015-0615-7}{\textit{Invent. Math.}}
 \textbf{204} (2016), 317--346, \href{https://arxiv.org/abs/1502.07520}{arXiv:1502.07520}.

\bibitem{abe2017restrictions-pilt}
Abe T., Restrictions of free arrangements and the division theorem, in
 Perspectives in {L}ie Theory, \textit{Springer INdAM Ser.}, Vol.~19, Editors
 F.~Callegaro, G.~Carnovale, F.~Caselli, C.~De~Concini, A.~De~Sole, \href{https://doi.org/10.1007/978-3-319-58971-8_14}{Springer},
 Cham, 2017, 389--401, \href{https://arxiv.org/abs/1603.03863}{arXiv:1603.03863}.

\bibitem{athanasiadis1998free-ejoc}
Athanasiadis C.A., On free deformations of the braid arrangement,
 \href{https://doi.org/10.1006/eujc.1997.0149}{\textit{European~J. Combin.}} \textbf{19} (1998), 7--18.

\bibitem{bjorner1990hyperplane-dcg}
Bj\"orner A., Edelman P.H., Ziegler G.M., Hyperplane arrangements with a
 lattice of regions, \href{https://doi.org/10.1007/BF02187790}{\textit{Discrete Comput. Geom.}} \textbf{5} (1990),
 263--288.

\bibitem{borissova2016regular-etpad}
Borissova S., Regular round matroids, {M}aster Thesis, California State
 University, 2016, available at
 \url{https://scholarworks.lib.csusb.edu/etd/423}.

\bibitem{brylawski1975modular-totams}
Brylawski T., Modular constructions for combinatorial geometries,
 \href{https://doi.org/10.2307/1997065}{\textit{Trans. Amer. Math. Soc.}} \textbf{203} (1975), 1--44.

\bibitem{dirac1961rigid-aadmsduh}
Dirac G.A., On rigid circuit graphs, \href{https://doi.org/10.1007/BF02992776}{\textit{Abh. Math. Sem. Univ. Hamburg}}
 \textbf{25} (1961), 71--76.

\bibitem{dowling1973class-joctsb}
Dowling T.A., A class of geometric lattices based on finite groups,
 \href{https://doi.org/10.1016/s0095-8956(73)80007-3}{\textit{J.~Combin. Theory Ser.~B}} \textbf{14} (1973), 61--86.

\bibitem{edelman1994free-mz}
Edelman P.H., Reiner V., Free hyperplane arrangements between {$A_{n-1}$} and
 {$B_n$}, \href{https://doi.org/10.1007/BF02571719}{\textit{Math.~Z.}} \textbf{215} (1994), 347--365.

\bibitem{edelman1996free-dcg}
Edelman P.H., Reiner V., Free arrangements and rhombic tilings,
 \href{https://doi.org/10.1007/BF02711498}{\textit{Discrete Comput. Geom.}} \textbf{15} (1996), 307--340.

\bibitem{jambu1998generalization-t}
Jambu M., Papadima S., A generalization of fiber-type arrangements and a new
 deformation method, \href{https://doi.org/10.1016/S0040-9383(97)00092-X}{\textit{Topology}} \textbf{37} (1998), 1135--1164.

\bibitem{jambu1984free-aim}
Jambu M., Terao H., Free arrangements of hyperplanes and supersolvable
 lattices, \href{https://doi.org/10.1016/0001-8708(84)90024-0}{\textit{Adv. Math.}} \textbf{52} (1984), 248--258.

\bibitem{koban2004comments-ejoc}
Koban L., Comments on: ``{S}upersolvable frame-matroid and graphic-lift
 lattices'' [\textit{European J.~Combin.} {\bf 22} (2001), 119--133] by
 {T}.~{Z}aslavsky, \href{https://doi.org/10.1016/S0195-6698(03)00122-7}{\textit{European~J. Combin.}} \textbf{25} (2004), 141--144.

\bibitem{kung1986numerically-gd}
Kung J.P.S., Numerically regular hereditary classes of combinatorial
 geometries, \href{https://doi.org/10.1007/BF00147534}{\textit{Geom. Dedicata}} \textbf{21} (1986), 85--105.

\bibitem{lindstrom1978strong-joctsa}
Lindstr\"om B., On strong joins and pushout of combinatorial geometries,
 \href{https://doi.org/10.1016/0097-3165(78)90035-3}{\textit{J.~Combin. Theory Ser.~A}} \textbf{25} (1978), 77--79.

\bibitem{nakashima2019enumeration-a}
Nakashima N., Tsujie S., Enumeration of flats of the extended Catalan and Shi
 arrangements with species, \href{https://arxiv.org/abs/1904.09748}{arXiv:1904.09748}.

\bibitem{orlik1992arrangements}
Orlik P., Terao H., Arrangements of hyperplanes, \textit{Grundlehren der
 Mathematischen Wissenschaften}, Vol.~300, \href{https://doi.org/10.1007/978-3-662-02772-1}{Springer-Verlag}, Berlin, 1992.

\bibitem{oxley2011matroid}
Oxley J., Matroid theory, 2nd~ed., \textit{Oxford Graduate Texts in Mathematics},
 Vol.~21, \href{https://doi.org/10.1093/acprof:oso/9780198566946.001.0001}{Oxford University Press}, Oxford, 2011.

\bibitem{postnikov2000deformations-joctsa}
Postnikov A., Stanley R.P., Deformations of {C}oxeter hyperplane arrangements,
 \href{https://doi.org/10.1006/jcta.2000.3106}{\textit{J.~Combin. Theory Ser.~A}} \textbf{91} (2000), 544--597,
 \href{https://arxiv.org/abs/math.CO/9712213}{arXiv:math.CO/9712213}.

\bibitem{probert2018chordality}
Probert A.M., Chordality in matroids: in search of the converse to
 {H}lin\v{e}n\'y's theorem, Ph.D.~Thesis, {V}ictoria University of Wellington, 2018.

\bibitem{saito1977uniformization-rk}
Saito K., On the uniformization of complements of discriminant loci, in
 Hyperfunction and Linear Differential Equations, \textit{RIMS K\^oky\^uroku},
 Vol. 287, Res. Inst. Math. Sci. (RIMS), Kyoto, 1977, 117--137.

\bibitem{saito1980theory-jotfostuotsam}
Saito K., Theory of logarithmic differential forms and logarithmic vector
 fields, \textit{J.~Fac. Sci. Univ. Tokyo Sect. IA Math.} \textbf{27} (1980),
 265--291.

\bibitem{stanley1971modular-au}
Stanley R.P., Modular elements of geometric lattices, \href{https://doi.org/10.1007/BF02944981}{\textit{Algebra
 Universalis}} \textbf{1} (1971), 214--217.

\bibitem{stanley1972supersolvable-au}
Stanley R.P., Supersolvable lattices, \href{https://doi.org/10.1007/BF02945028}{\textit{Algebra Universalis}} \textbf{2}
 (1972), 197--217.

\bibitem{suyama2019signed-dm}
Suyama D., Torielli M., Tsujie S., Signed graphs and the freeness of the {W}eyl
 subarrangements of type {$B_\ell$}, \href{https://doi.org/10.1016/j.disc.2018.09.029}{\textit{Discrete Math.}} \textbf{342}
 (2019), 233--249, \href{https://arxiv.org/abs/1707.01967}{arXiv:1707.01967}.

\bibitem{terao1981generalized-im}
Terao H., Generalized exponents of a free arrangement of hyperplanes and
 {S}hepherd--{T}odd--{B}rieskorn formula, \href{https://doi.org/10.1007/BF01389197}{\textit{Invent. Math.}} \textbf{63}
 (1981), 159--179.

\bibitem{terao1986modular-aim}
Terao H., Modular elements of lattices and topological fibration, \href{https://doi.org/10.1016/0001-8708(86)90097-6}{\textit{Adv.
 Math.}} \textbf{62} (1986), 135--154.

\bibitem{palezzato2019free-joac}
Torielli M., Palezzato E., Free hyperplane arrangements over arbitrary fields,
 \href{https://doi.org/10.1007/s10801-019-00901-x}{\textit{J.~Algebraic Combin.}} \textbf{52} (2020), 237--249,
 \href{https://arxiv.org/abs/1803.09908}{arXiv:1803.09908}.

\bibitem{torielli2020freeness-a}
Torielli M., Tsujie S., Freeness of Hyperplane arrangements between {B}oolean
 arrangements and {W}eyl arrangements of type~$B_\ell$, \href{https://doi.org/10.37236/9341}{\textit{Electron.~J.
 Combin.}} \textbf{27} (2020), P3.10, 15~pages, \href{https://arxiv.org/abs/1807.02432}{arXiv:1807.02432}.

\bibitem{welsh2010matroid}
Welsh D.J.A., Matroid theory, \textit{Dover Books on Mathematics}, Dover Publications,
 Mineola, N.Y., 2010.

\bibitem{yoshinaga2004characterization-im}
Yoshinaga M., Characterization of a free arrangement and conjecture of
 {E}delman and {R}einer, \href{https://doi.org/10.1007/s00222-004-0359-2}{\textit{Invent. Math.}} \textbf{157} (2004), 449--454.

\bibitem{yoshinaga2007free-potjasams}
Yoshinaga M., Free arrangements over finite field, \href{https://doi.org/10.3792/pjaa.82.179}{\textit{Proc. Japan Acad.
 Ser.~A Math. Sci.}} \textbf{82} (2006), 179--182, \href{https://arxiv.org/abs/math.CO/0606005}{arXiv:math.CO/0606005}.

\bibitem{zaslavsky1989biased-joctsb}
Zaslavsky T., Biased graphs. {I}.~{B}ias, balance, and gains,
 \href{https://doi.org/10.1016/0095-8956(89)90063-4}{\textit{J.~Combin. Theory Ser.~B}} \textbf{47} (1989), 32--52.

\bibitem{zaslavsky1991biased-joctsb}
Zaslavsky T., Biased graphs. {II}.~{T}he three matroids, \href{https://doi.org/10.1016/0095-8956(91)90005-5}{\textit{J.~Combin.
 Theory Ser.~B}} \textbf{51} (1991), 46--72.

\bibitem{zaslavsky2001supersolvable-ejoc}
Zaslavsky T., Supersolvable frame-matroid and graphic-lift lattices,
 \href{https://doi.org/10.1006/eujc.2000.0418}{\textit{European~J. Combin.}} \textbf{22} (2001), 119--133.

\bibitem{zaslavsky2003biased-joctsb}
Zaslavsky T., Biased graphs. {IV}.~{G}eometrical realizations, \href{https://doi.org/10.1016/S0095-8956(03)00035-2}{\textit{J.~Combin. Theory Ser.~B}} \textbf{89} (2003), 231--297.

\bibitem{ziegler1990matroid-totams}
Ziegler G.M., Matroid representations and free arrangements, \href{https://doi.org/10.2307/2001687}{\textit{Trans.
 Amer. Math. Soc.}} \textbf{320} (1990), 525--541.

\bibitem{ziegler1991binary-potams}
Ziegler G.M., Binary supersolvable matroids and modular constructions,
 \href{https://doi.org/10.2307/2048620}{\textit{Proc. Amer. Math. Soc.}} \textbf{113} (1991), 817--829.

\end{thebibliography}
\end{document}